\definecolor{ltgrey}{RGB}{180, 187, 198}
\numberwithin{equation}{section}
\numberwithin{figure}{section}
\newtheorem{theorem}{Theorem}[section]
\newtheorem{lemma}[theorem]{Lemma}
\newtheorem{proposition}[theorem]{Proposition}
\newtheorem{conjecture}[theorem]{Conjecture}
\newtheorem{problem}[theorem]{Problem}
\newtheorem{remark}[theorem]{Remark}
\newtheorem{example}[theorem]{Example}
\newtheorem{THM}{Theorem}
\newtheorem{PROB}{Problem}
\theoremstyle{definition}
\newcommand{\C}{{\mathbb{C}}}
\newcommand{\fh}{\mathfrak{h}}
\newcommand{\into}{\hookrightarrow}
\DeclareMathOperator{\Stab}{Stab}
\DeclareMathOperator{\pt}{pt}
\newcommand{\hsm}{{\hspace{1mm}}}
\definecolor{gold}{rgb}{0.85,.66,0}
\definecolor{cherry}{rgb}{0.9,.1,.2}
\definecolor{burgundy}{rgb}{0.8,.2,.2}
\definecolor{orangered}{rgb}{0.85,.3,0}
\definecolor{orange}{rgb}{0.85,.4,0}
\definecolor{olive}{rgb}{.45,.4,0}
\definecolor{lime}{rgb}{.6,.9,0}
\definecolor{green}{rgb}{.2,.7,0}
\definecolor{grey}{rgb}{.4,.4,.2}
\definecolor{brown}{rgb}{.4,.3,.1}
\renewcommand{\S}{{\mathcal{S}}}
\renewcommand{\Stab}{{\mathrm{Stab}}}
\newcommand{\Flags}{{\mathcal{F}\ell ags}}
\newcommand{\Hess}{{\mathcal{H}ess}}
\def\Tn{T}
\newcommand{\Sn}{S_n}
\DeclareMathOperator{\Par}{P}
\begin{document}

\title{Toward permutation bases in the equivariant cohomology rings of regular semisimple Hessenberg 
varieties}

\author{Megumi Harada}
\address{Department of Mathematics and
Statistics\\ McMaster University\\ 1280 Main Street West\\ Hamilton, Ontario L8S4K1\\ Canada}
\email{Megumi.Harada@math.mcmaster.ca}
\urladdr{\url{http://www.math.mcmaster.ca/Megumi.Harada/}}
\thanks{The first author is partially supported by an NSERC Discovery Grant and a Canada Research Chair (Tier 2) award.}

\author{Martha Precup}
\address{Department of Mathematics and Statistics\\ Washington University in St. Louis \\ One Brookings Drive \\ St. Louis, Missouri  63130 \\ U.S.A. }
\email{mprecup@math.northwestern.edu}
\urladdr{\url{https://www.math.wustl.edu/~precup/}}
\thanks{The second author is partially supported by NSF Grant DMS 1954001}

\author [J. Tymoczko]{Julianna Tymoczko}
\address{Department of Mathematics \& Statistics \\ 
Clark Science Center \\ 
Smith College \\ 
Burton Hall 115 \\ 
Northampton, MA 01063 \\ 
U.S.A. }
\email{tymoczko@smith.edu}

\keywords{} 
\subjclass[2000]{Primary: 14M15; Secondary: 05E05}

\date{\today}


\begin{abstract} 
Recent work of Shareshian and Wachs, Brosnan and Chow, and Guay-Paquet connects the well-known Stanley--Stembridge conjecture in combinatorics to the dot action of the symmetric group $\Sn$ on the cohomology rings $H^*(\Hess(\mathsf{S},h))$ of regular semisimple Hessenberg varieties. In particular, in order to prove the Stanley--Stembridge conjecture, it suffices to construct (for any Hessenberg function $h$) a permutation basis of $H^*(\Hess(\mathsf{S},h))$ whose elements have stabilizers isomorphic to Young subgroups. 
In this manuscript we give several results which contribute toward this goal. Specifically, in some special cases, we give a new, purely combinatorial construction of classes in the $T$-equivariant cohomology ring $H^*_T(\Hess(\mathsf{S},h))$ which form permutation bases for subrepresentations in $H^*_T(\Hess(\mathsf{S},h))$. Moreover, from the definition of our classes it follows that the stabilizers are isomorphic to Young subgroups. Our constructions use a presentation of the $T$-equivariant cohomology rings $H^*_T(\Hess(\mathsf{S},h))$ due to Goresky, Kottwitz, and MacPherson. The constructions presented in this manuscript generalize past work of Abe--Horiguchi--Masuda, Chow, and Cho--Hong--Lee. 
\end{abstract}

\maketitle

\setcounter{tocdepth}{1}
\tableofcontents

\section{Introduction} 

Hessenberg varieties (in Lie type A) are subvarieties of the full flag
variety $\Flags(\C^n)$ of nested sequences of linear subspaces in
$\C^n$. Research concerning Hessenberg varieties lies in a fruitful intersection of algebraic geometry, combinatorics, and representation theory, and they have been studied
extensively since the late 1980s. These varieties are parameterized by a choice of linear operator $\mathsf{S}\in \mathfrak{gl}(n,\C)$ and nondecreasing function $h: [n] \to[n]$, where $[n]:= \{1,2,\ldots, n\}$, called a Hessenberg function. When $\mathsf{S}$ is a regular semisimple element and $h(i)\geq i$ for all $i$, $\Hess(\mathsf{S},h)$ is called a regular semisimple Hessenberg variety.

The dot action of the symmetric group $\Sn$ on the cohomology\footnote{In this paper, we focus exclusively on cohomology rings with coefficients in $\C$. We will omit the notation of coefficients in our cohomology rings for this reason.}  rings $H^*(\Hess(\mathsf{S},h))$ of regular semisimple Hessenberg varieties, defined by the third author in \cite{Tymoczko2008}, has received considerable recent attention due to its connection to the well-known Stanley--Stembridge conjecture in combinatorics. This conjecture states that the chromatic symmetric function of the incomparability graph of a (3+1)-free poset is $e$-positive, i.e., it is a non-negative linear combination of elementary
symmetric functions. The Stanley--Stembridge conjecture is a well-known conjecture in the field of algebraic combinatorics and is related, for example, to various other deep conjectures about immanants \cite{Stembridge1992}. 
The relationship between the Stanley--Stembridge conjecture and Hessenberg varieties was made apparent some years ago by work of Shareshian and Wachs \cite{Shareshian-Wachs2016}, Brosnan and Chow \cite{Brosnan-Chow2018}, and Guay-Paquet \cite{Guay-Paquet2016}. 
We refer the reader to \cite{Harada-Precup2019} for a leisurely exposition of the history; for the purposes of this manuscript we restrict ourselves to recalling that, in order to prove the Stanley--Stembridge conjecture from the point of view of Hessenberg varieties, it suffices to construct a basis of $H^*(\Hess(\mathsf{S},h))$
that is permuted by the dot action (i.e., a permutation basis) and such that the stabilizer of each element is a subgroup of $\Sn$ generated by reflections.  This problem has motivated much research in the field of Hessenberg varieties in the last few years.

In this manuscript, we tackle this problem by using techniques that are available in 
$T$-equivariant cohomology and not ordinary cohomology. We exploit general properties of equivariant cohomology and of the $T$-action on $\Hess(\mathsf{S},h)$, which in particular imply any free $H^*_T(\pt)$-module basis of $H^*_T(\Hess(\mathsf{S},h))$ projects to a $\C$-basis of $H^*(\Hess(\mathsf{S},h))$ under the natural projection. The definition of the dot action in \cite{Tymoczko2008} used this same philosophy, defining an action on $H^*_T(\Hess(\mathsf{S},h))$ and then inducing an action on ordinary cohomology by this same projection. Similarly, our strategy is to first construct a $H^*_T(\pt)$-module basis for $H^*_T(\Hess(\mathsf{S},h))$ which is permuted by the dot action, and then project it to ordinary cohomology. Since this construction of the basis is consistent with the construction of the dot action on $H^*_T(\Hess(\mathsf{S},h))$ and $H^*(\Hess(\mathsf{S},h))$, a set that is permuted by the dot action in equivariant cohomology projects to a set that is permuted also in ordinary cohomology. Section~\ref{subsec: perm basis program} contains a more leisurely account of this approach toward the Stanley--Stembridge conjecture, including an explicit formulation of what we call the ``permutation basis program.''

Our goal in this manuscript is to take preliminary steps toward the construction of a permutation basis of $H^*(\Hess(\mathsf{S},h))$ in the following sense. We explicitly construct 
collections of cohomology classes in $H^*_T(\Hess(\mathsf{S},h))$ which are permuted by the dot action, are $H^*_T(\pt)$-linearly independent, and whose stabilizer groups are  reflection subgroups. From this it follows that these classes form a permutation basis of the subrepresentation in $H^*_T(\Hess(\mathsf{S},h))$ which they span. Moreover, we can identify explicitly this subrepresentation in terms of permutation representations $M^\lambda:=\mathrm{ind}^{\Sn}_{S_\lambda}(\mathbf{1})$ for appropriate partitions $\lambda$ and Young subgroups $S_\lambda$ of $\Sn$. Thus, our results can be viewed as 
achieving some progress toward the larger goal of building a full $H^*_T(\pt)$-module permutation basis of $H^*_T(\Hess(\mathsf{S},h))$, with 
point stabilizers isomorphic to Young subgroups -- which would in turn resolve the Stanley--Stembridge conjecture.

One important subtlety is that we consider equivariant cohomology as a module over a polynomial ring and not as a complex vector space.  This means that, when we equip equivariant cohomology with the structure of a (twisted) representation of the finite group $S_n$, a submodule which is stable under the representation may not be a direct summand.  For instance, with the standard action of the permutation group $S_2$ on $\mathbb{C}[t_1,t_2]$, the symmetric polynomial $t_1+t_2$ generates a $\mathbb{C}[t_1,t_2]$-subrepresentation that cannot be written as a direct summand of $\mathbb{C}[t_1,t_2].$ Thus although this manuscript constructs a linearly independent set of vectors in $H^*_T(\Hess(\mathsf{S},h))$ which are permuted by the dot action and have stabilizer equal to a Young subgroup, it is not a priori guaranteed that our set can be extended to a full permutation basis. Another subtlety is that the $H^*_T(\pt)$-linear independence of our sets of permuted vectors does not necessarily imply that their projections to $H^*(\Hess(\mathsf{S},h))$ are still linearly independent. Together, these subtleties mean that the open question remains, whether we can indeed extend our linearly independent sets in this manuscript to a full permuted basis. (See Section \ref{subsec: perm basis program} for more.) This is a question for future work.

We now summarize the results within this manuscript in a rough form. Our main technical tool is the Goresky--Kottwitz--MacPherson (GKM) theory of 
$T$-equivariant cohomology. Here we consider the maximal torus $T$ of diagonal matrices in $GL(n,\C)$ and the natural $T$-action on $\Hess(\mathsf{S},h) \subseteq \Flags(\C^n)$ induced from the action of $GL(n,\C)$ on 
$\Flags(\C^n) \cong GL(n,\C)/B$ by left multiplication. GKM theory describes explicitly and combinatorially the $T$-equivariant cohomology $H^*_T(\Hess(\mathsf{S},h))$ as a collection of lists of polynomials--- one polynomial for each permutation $w \in \Sn$--- which satisfy compatibility conditions (see~\eqref{GKM for X(h)}); see \cite{Tymoczko2008} for details. 
While the explicit combinatorial nature of the GKM description of $H^*_T(\Hess(\mathsf{S},h))$ is convenient for many purposes, it is worth pointing out that the question of building permutation bases in the language of GKM theory poses its own computational challenges. This is because the dot action exchanges polynomials associated to different permutations $w \in \Sn$, and this complicates the analysis of the linear independence of orbits under the dot action. Nevertheless, in some special cases we are able to overcome these obstacles, 
as we now explain.

Our first results give purely combinatorial constructions of well-defined GKM classes in $H^*_T(\Hess(\mathsf{S},h))$. 
We begin by formalizing a statement which is well-known to experts but which (to our knowledge) has not been recorded in the literature in this generality. 
Let $\lambda = (\lambda_1, \lambda_2, \cdots, \lambda_\ell)$ be a composition of $n$ and $S_\lambda$ denote the associated Young subgroup, generated by the set of simple reflections $\{s_i  \mid i \not \in \{\lambda_1, \lambda_1+\lambda_2, \cdots, \lambda_1 + \lambda_2 + \cdots + \lambda_{\ell-1}\} \}$. Let 
$v_\lambda$ be the permutation obtained by taking the longest permutation $w_0 = [ n, n-1, \ldots, 2, 1 ]$ in $\Sn$ and re-ordering the values $\{1,2,\cdots,\lambda_1\}, \{\lambda_1+1,\ldots, \lambda_1+\lambda_2\}, \cdots, \{\lambda_1 + \lambda_2 + \cdots + \lambda_{\ell-1}+1, \cdots, n\}$ to be increasing.  Recall that $v_\lambda$ is the unique maximal element with respect to Bruhat order in the set of shortest coset representatives for $S_\lambda \backslash \Sn$.  Now define a function $f_\lambda^{(k)}: S_n \to \C[t_1, \ldots, t_n]$ by
$$
f_\lambda (w) := \left\{ \begin{array}{cl} \prod_{t_i-t_j\in N_h^-(v_\lambda)} (t_{w(i)} - t_{w(j)}) & \textup{ if } w=yv_\lambda, \textup{ for some } y\in S_\lambda\\
0 & \textup{ otherwise }   \end{array}\right.
$$
where $N_h^-(v_\lambda) := \{t_i - t_j  \mid i>j \, \textup{ and } \, v_\lambda(i) < v_\lambda(j) \, \textup{ and } \, i \leq h(j) \}$. 
Then it is well-known among experts that $f_\lambda \in H_T^{2 |N_h^-(v_\lambda)|}(\Hess(\mathsf{S},h))$ is a well-defined equivariant cohomology class. 

The above construction yields $T$-equivariant cohomology classes which have the special property that their support set (i.e., the permutations $w \in \Sn$ on which $f_\lambda(w) \neq 0$) is the single coset of the Young subgroup $S_\lambda$ containing the maximal coset representative $v_\lambda$. Thus we call these ``top-coset classes.'' Moreover, it is not difficult to see that the orbit under the dot action of these ``top-coset classes'' is $H^*_T(\pt)$-linearly independent. Specifically, for $f_\lambda$ the top-coset GKM class defined above, 
the $\Sn$-orbit of $f_\lambda$ under the dot action 
\[
\{ w \cdot f_\lambda \mid w\in \Sn \}
\] 
is $H_T^*(\pt)$-linearly independent.  Furthermore, the $H_T^*(\pt)$-subrepresentation of $H_T^*(\Hess(\mathsf{S},h))$ spanned by this set in $H^*_T(\Hess(\mathsf{S},h))$ is an $\Sn$-subrepresentation with the same character as the $\Sn$-representation $\mathrm{ind}_{S_{\lambda}}^{\Sn}(\mathbf{1})\simeq M^{\Par(\lambda)}$, where $\Par(\lambda)$ is the partition of $n$ obtained from $\lambda$ by rearranging the parts in decreasing order. 
We explain these facts in some detail in Section~\ref{sec.top.coset}.

As mentioned above, even in cases for which the Stanley--Stembridge conjecture is known to hold, constructing an explicit basis for the free $H_T^*(\pt)$-module $H^*_T(\Hess(\mathsf{S},h))$ which is permuted by the dot action remains difficult. Progress has been made in two special cases.  The first is $h=(h(1), n, \ldots, n)$, studied by Abe, Horiguchi, and Masuda in~\cite{Abe-Horiguchi-Masuda2019} and the second is $h=(2,3,\ldots, n,n)$ where Cho, Hong, and Lee \cite{Cho-Hong-Lee2020} recently proved a conjecture of Chow~\cite{Chow-conj} which gave an explicit permutation basis in this special case.  In each of these settings, the authors use the top-coset construction outlined above.

In order to make progress on the construction of a permutation basis in the general case we need recipes for constructing classes that have support on more than one coset.  Our first main result takes a step in this direction, in the special case when the composition has two parts. This is a natural first case to consider, as the Stanley--Stembridge conjecture is known to be true in the so-called ``abelian case,'' and in that setting, the permutation representations occurring as summands of the dot action are either trivial or correspond to partitions of $n$ with exactly two parts (see \cite{Harada-Precup2019}). We have the following; for precise definitions see Section~\ref{sec.class.defn}.

\begin{THM}[Theorem~\ref{thm.class}] \label{thm.1}
Let $h:[n]\to [n]$ be a Hessenberg function and $\lambda = (\lambda_1, \lambda_2)$ a composition of $n$. Let $0\leq k \leq \lambda_2$. If $\lambda_1 > 1$ then we additionally assume that $h(k+2)=n$. 
Let $v_k$ denote the permutation whose one-line notation is given in~\eqref{eq: vk} in Section~\ref{sec.class.defn} and let 
$\mathcal{S}_k := \{t_i - t_j \mid  i < j \, \textup{ and } \, v_k^{-1}(i) > v_k^{-1}(j) \, \textup{ and } \, 
v_k^{-1}(i) \leq h(v_k^{-1}(j)) \}$. 
Then the function $f^{(k)}_\lambda: \Sn \to \C[t_1, \ldots, t_n]$ defined by 
\begin{eqnarray*}
f^{(k)}_\lambda (yv): = \left\{ \begin{array}{ll}  \prod_{t_a-t_b\in \S_k} (t_{y(a)}-t_{y(b)}) & \textup{ if } v\geq v_k, y\in S_\lambda  \\  
0 & \textup{ otherwise} \end{array} \right.
\end{eqnarray*}
is a well-defined equivariant cohomology class in $H_T^{2|\S_k|}(\Hess(\mathsf{S},h))$.
\end{THM}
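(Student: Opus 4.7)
The plan is to verify directly that the tuple $(f^{(k)}_\lambda(w))_{w \in \Sn}$ satisfies the GKM compatibility conditions recorded in~\eqref{GKM for X(h)}, which characterize elements of $H^*_T(\Hess(\mathsf{S},h))$. Explicitly, for every edge $(w, w')$ of the moment graph of $\Hess(\mathsf{S},h)$ with edge label $\alpha_e$, one must show that $f^{(k)}_\lambda(w) - f^{(k)}_\lambda(w') \in (\alpha_e)$ inside $\C[t_1, \ldots, t_n]$; the degree assertion $2|\S_k|$ then follows by counting the linear factors in the defining product.

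The crucial structural observation is that every $w \in \Sn$ factors uniquely as $w = yv$ with $y \in S_\lambda$ and $v$ the minimal length coset representative for $S_\lambda w$, and that on the support of $f^{(k)}_\lambda$, namely those $w$ with $v \geq v_k$ in Bruhat order, the value $f^{(k)}_\lambda(yv)$ depends only on the factor $y$ and not on $v$. This immediately handles any edge between two supported vertices sharing the same $y$-factor, since the two values are equal and the difference is zero.

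With this in hand, I would run through the remaining cases for each edge $(w, w')$ of the moment graph. If both $w, w'$ lie outside the support, both values vanish and there is nothing to check. If both lie inside the support but with distinct $y$-factors, I would express $w$ and $w'$ via their unique factorizations and perform a direct algebraic manipulation, matching the edge label against the factors $(t_{y(a)} - t_{y(b)})$ appearing in the product and extracting $\alpha_e$ as a common divisor of the difference.

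The main obstacle, I expect, is the mixed case in which exactly one endpoint of the edge lies in the support of $f^{(k)}_\lambda$. There one must show that the single nonzero value is itself divisible by $\alpha_e$; this requires a careful combinatorial argument identifying which moment-graph edges can cross the boundary of the support, and then showing that for every such edge the label coincides up to sign with one of the factors $(t_{y(a)} - t_{y(b)})$ indexed by $\S_k$. The Bruhat-order hypothesis $v \geq v_k$, the explicit form of $v_k$ given in~\eqref{eq: vk}, and the Hessenberg condition $b \leq h(a)$ together constrain which transpositions can produce boundary-crossing edges. The additional hypothesis $h(k+2) = n$ when $\lambda_1 > 1$ is, I expect, precisely what is needed to ensure that no boundary-crossing edge has a label outside this controlled list, so that the required divisibility holds automatically. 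Once this case is settled, the GKM conditions hold at every edge and the proof is complete.
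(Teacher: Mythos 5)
Your overall strategy---verifying the GKM divisibility condition edge by edge, splitting into cases according to whether the endpoints lie in the support, and exploiting the fact that $f^{(k)}_\lambda(yv)$ depends only on the factor $y\in S_\lambda$---is the same as the paper's. The easy cases (both endpoints outside the support; both endpoints in the same right coset of $S_\lambda$) are handled correctly in your sketch, and the mixed case does indeed reduce to showing that the edge label $w(\gamma)$ occurs among the factors of $f^{(k)}_\lambda(w)$, i.e.\ that $\beta = v_j(\gamma)$ lies in $\S_k$; in the paper that step uses only the monotonicity $v_k^{-1}(a)\le v_j^{-1}(a)\le h(v_j^{-1}(b))\le h(v_k^{-1}(b))$ and does \emph{not} require $h(k+2)=n$.

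The genuine gap is in the case you dismiss as ``direct algebraic manipulation'': an edge $w\to ws_\gamma$ whose endpoints both lie in the support but in \emph{distinct} right cosets $S_\lambda v_j$ and $S_\lambda v_\ell$ with $k\le \ell<j$. Here the label $w(\gamma)$ is in general \emph{not} among the linear factors of $f^{(k)}_\lambda(w)$, and when $j>\ell+1$ the two endpoints have $S_\lambda$-factors $y$ and $yy_1$ with $y_1 = s_{\lambda_1+\ell+1}\cdots s_{\lambda_1+j-1}$ a product of several simple reflections, not the single reflection $s_{w(\gamma)}$---so neither ``extracting $\alpha_e$ as a common divisor'' nor the standard fact that $\alpha$ divides $f-s_\alpha f$ applies. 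The paper's resolution is to prove the two values are \emph{equal}, by showing that $y_1$ fixes $\prod_{\eta\in\S_k}\eta$; the hypothesis $h(k+2)=n$ (for $\lambda_1>1$) is used precisely here, to force the subset $\S_k^{(2)}$ to be the full rectangle $\{t_a-t_b \mid a\in\{2,\dots,\lambda_1\},\ b\in\{\lambda_1+k+1,\dots,n\}\}$, which is $y_1$-stable. Your proposal instead attributes the role of $h(k+2)=n$ to the boundary-crossing case, which is a misdiagnosis: Example~\ref{example: nonexample and ex} ($n=6$, $h=(3,4,5,6,6,6)$, $\lambda=(2,4)$, $k=1$, the edge $v_3\to s_4v_1$) is exactly a failure of the cross-coset case when $h(k+2)\ne n$, with \emph{both} endpoints supported and the difference not divisible by the label. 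Without identifying this case and the equality $f^{(k)}_\lambda(w)=f^{(k)}_\lambda(ws_\gamma)$ as the target, the proof does not close.
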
 

We recover the top-coset classes from the construction above in the special case where $k=\lambda_2$ (since $v_k = v_\lambda$ and $\S_k = v_k(N_h^-(v_\lambda))$ in that case).  When $k<\lambda_2$ our classes are supported on a union of right cosets and we can give (Lemma~\ref{lem.left-coset-support}) a concrete description of their support and the support of any element in the $\Sn$-orbit of $f^{(k)}_\lambda$ under the dot action.  Since these support sets consist of unions of more than one coset in general, proving that the $\Sn$-orbit of $f_\lambda^{(k)}$ is $H_T^*(\pt)$-linearly independent becomes more difficult.  However, we do obtain a linear independence result analogous to the top-coset case mentioned above in the case where $k=\lambda_2-1$, under some additional hypotheses on the Hessenberg function $h$.  Roughly, the result is as follows; see Theorem~\ref{theorem: orbit independence} for the precise statement.

\begin{THM} [Theorem~\ref{theorem: orbit independence}] \label{thm.2}
Let $h: [n] \to [n]$ a Hessenberg function. Let $\lambda=(\lambda_1, \lambda_2)$ be a composition of $n$ such that $h(1) < \lambda_2$. If $\lambda_1>1$, we place additional assumptions on the Hessenberg function $h$ as in Theorem~\ref{theorem: orbit independence} below.  
Then
\begin{enumerate} 
\item the $\Sn$-orbit of $f^{(\lambda_2-1)}_\lambda$ is $H^*_T(\pt)$-linearly independent, and
\item the stabilizer of each element in the $\Sn$-orbit is conjugate to the Young subgroup $S_\lambda$. 
\end{enumerate} 
In particular, the $H_T^*(\pt)$-submodule of $H_T^*(\Hess(\mathsf{S},h))$ spanned by the $\Sn$-orbit of $f_\lambda^{(\lambda_2-1)}$ 
is an $\Sn$-subrepresentation with the same character as $\mathrm{Ind}_{S_\lambda}^{\Sn}(\mathbf{1})\simeq M^{\Par(\lambda)}$, where $\Par(\lambda)$ is the partition of $n$ obtained from $\lambda$ by rearranging the parts to be in decreasing order.
\end{THM}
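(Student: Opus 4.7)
The plan is to establish the theorem in two independent stages: first to show that $\Stab(f_\lambda^{(\lambda_2-1)})$ contains the Young subgroup $S_\lambda$, and then to prove the $H^*_T(\pt)$-linear independence of the entire $\Sn$-orbit via a triangular evaluation argument built on the support description of Lemma~\ref{lem.left-coset-support}. Once both are in hand, the character statement falls out formally: an orbit of cardinality $|\Sn|/|S_\lambda|$ whose point-stabilizers are conjugates of $S_\lambda$ spans a free $H^*_T(\pt)$-submodule on which the (twisted) $\Sn$-action has the same character as $\mathrm{ind}_{S_\lambda}^{\Sn}(\mathbf{1}) \simeq M^{\Par(\lambda)}$.

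For the stabilizer, I would compute directly from the defining formula of Theorem~\ref{thm.class} that $y \cdot f_\lambda^{(\lambda_2-1)} = f_\lambda^{(\lambda_2-1)}$ for every $y \in S_\lambda$. The verification is pointwise: at any $w = y' v$ in the support, with $y' \in S_\lambda$ and $v \geq v_{\lambda_2-1}$, the dot action gives $(y \cdot f_\lambda^{(\lambda_2-1)})(w) = y \cdot \bigl(f_\lambda^{(\lambda_2-1)}(y^{-1}y' v)\bigr)$; since $y^{-1}y' \in S_\lambda$, the defining formula produces $\prod_{t_a - t_b \in \S_{\lambda_2-1}}(t_{y^{-1}y'(a)} - t_{y^{-1}y'(b)})$, and applying the variable permutation $y$ converts this into $\prod (t_{y'(a)} - t_{y'(b)}) = f_\lambda^{(\lambda_2-1)}(w)$. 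Thus $S_\lambda \subseteq \Stab(f_\lambda^{(\lambda_2-1)})$, and by equivariance $\sigma S_\lambda \sigma^{-1} \subseteq \Stab(\sigma \cdot f_\lambda^{(\lambda_2-1)})$ for every $\sigma \in \Sn$; the reverse inclusions will follow a posteriori from the linear independence, since distinct cosets of $S_\lambda$ will be shown to produce $H^*_T(\pt)$-linearly independent (hence distinct) orbit elements.

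For linear independence, let $W^\lambda$ be a fixed set of coset representatives for $S_\lambda \backslash \Sn$, and suppose $\sum_{\sigma \in W^\lambda} c_\sigma\, (\sigma \cdot f_\lambda^{(\lambda_2-1)}) = 0$ with $c_\sigma \in H^*_T(\pt)$. I would apply Lemma~\ref{lem.left-coset-support} to describe each $\mathrm{supp}(\sigma \cdot f_\lambda^{(\lambda_2-1)})$ as a (small) union of left cosets of $\sigma S_\lambda \sigma^{-1}$, reflecting the fact that $k = \lambda_2 - 1$ is just one step below the top-coset case. I would then construct, for each $\sigma$, a distinguishing permutation $w_\sigma$ such that $(\sigma \cdot f_\lambda^{(\lambda_2-1)})(w_\sigma) \neq 0$ while $(\tau \cdot f_\lambda^{(\lambda_2-1)})(w_\sigma) = 0$ for all $\tau \in W^\lambda$ preceding $\sigma$ in a carefully chosen total ordering. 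Evaluating the hypothetical relation at each $w_\sigma$ then yields a triangular polynomial system from which $c_\sigma = 0$ emerges by descending induction.

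The hard part will be the explicit construction of the distinguishing permutations $w_\sigma$, and this is where the hypothesis $h(1) < \lambda_2$ (together with the additional Hessenberg hypothesis when $\lambda_1 > 1$) is expected to play a decisive role: these conditions should ensure that the two cosets comprising $\mathrm{supp}(f_\lambda^{(\lambda_2-1)})$ are well-separated enough that translating by different $\sigma \in W^\lambda$ produces support sets with controlled overlap. When $\lambda_1 = 1$ the subgroup $S_\lambda$ is maximal parabolic and the geometry of the supports is rigid enough for $h(1) < \lambda_2$ alone to suffice; when $\lambda_1 > 1$, pathological support coincidences must be ruled out by the extra hypothesis on $h$. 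Once triangularity is in place, the linear independence closes out, upgrades each stabilizer inclusion to equality, and yields the character identification with $M^{\Par(\lambda)}$.
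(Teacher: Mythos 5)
The stabilizer computation and the final character identification in your proposal are fine and match the paper (Lemma~\ref{lemma: fk def}(2) and Lemma~\ref{lemma: stabilizer is Slambda}). The problem is the linear independence step: the triangular evaluation scheme you describe cannot exist for $k=\lambda_2-1$. By Lemma~\ref{lem.left-coset-support}, $\mathrm{supp}(v_J\cdot f_\lambda^{(\lambda_2-1)}) = v_Jv_{\lambda_2-1}S_\lambda^{(\lambda_2-1)}\sqcup v_Jv_{\lambda_2}S_\lambda^{(\lambda_2)}$, and as $J$ ranges over $\lambda_1$-subsets each of the two families $\{v_Jv_{\lambda_2-1}S_\lambda^{(\lambda_2-1)}\}_J$ and $\{v_Jv_{\lambda_2}S_\lambda^{(\lambda_2)}\}_J$ is a \emph{partition} of $\Sn$ (Lemma~\ref{lemma.right-to-left}), while $v_Jv_{\lambda_2-1}S_\lambda^{(\lambda_2-1)}\cap v_Jv_{\lambda_2}S_\lambda^{(\lambda_2)} = v_JS_\lambda v_{\lambda_2-1}\cap v_JS_\lambda v_{\lambda_2}=\emptyset$. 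Hence \emph{every} $w\in\Sn$ lies in the support of exactly \emph{two distinct} orbit elements. In particular there is no permutation at which exactly one orbit element is nonzero, so the base case of your descending induction — a point $w_{\sigma_m}$ for the last representative $\sigma_m$ at which all preceding (i.e.\ all other) classes vanish — does not exist, no matter how the total order is chosen. No choice of Hessenberg hypothesis repairs this; it is forced by the coset combinatorics.

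The paper's proof works around exactly this obstruction. It first proves (Proposition~\ref{prop.support}) that two supports meet iff the indexing subsets $I,J$ agree or satisfy $|I\cap J|=\lambda_1-1$, and that triple intersections are empty, so evaluating the dependence relation at any $w$ yields an equation in at most two coefficients. It then evaluates at \emph{two} points, $v_{\lambda_2}$ and $s_{j_0-1}v_{\lambda_2}$, both lying in $\mathrm{supp}(f_\lambda^{(\lambda_2-1)})\cap\mathrm{supp}(v_K\cdot f_\lambda^{(\lambda_2-1)})$ for $K=\{2,\dots,\lambda_1,n\}$, and shows the resulting $2\times 2$ system in $c_J, c_K$ (with $J=\{1,\dots,\lambda_1\}$) has only the trivial solution; this is where the hypotheses $h(1)<\lambda_2$ and, for $\lambda_1>1$, $h(v_{\lambda_2-1}^{-1}(j_0))\le\lambda_2+1$ are used, to produce the explicit product formulas~\eqref{eqn.formula}--\eqref{eqn.vKformula2} and the nondegeneracy of the system. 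Finally it propagates $c_I=0$ to all $I$ by walking along the graph on $\lambda_1$-subsets whose edges are pairs with $|I\cap J|=\lambda_1-1$: at a point in the overlap of two supports only those two classes survive, so one vanishing coefficient forces the other. If you want to salvage your outline, replace ``one distinguishing point per class'' with ``two evaluation points per edge plus a connectivity/propagation argument''; that is essentially the paper's route.
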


Finally, we address the question of combining the permutation bases obtained above to form a permutation basis of a larger subrepresentation. Considering such unions is essential since a permutation basis for $H^*_T(\Hess(\mathsf{S},h))$ will generally consist of a collection of permutation bases, one for each induced permutation representation $\mathrm{ind}_{S_\lambda}^{\Sn}(\mathbf{1})$ contained in $H^*_T(\Hess(\mathsf{S},h))$, where the union of all such bases is still $H^*_T(\pt)$-linearly independent. As in the case of a single permutation representation, however, proving the linear independence of such unions of classes can be technically difficult. Nevertheless we are able to prove the linear independence of a union of two such permutation bases in the special case of $\lambda=(1, n-1)$. A rough statement is as follows; for the precise statement see Theorem~\ref{theorem:linear-ind2}. 

\begin{THM}[Theorem~\ref{theorem:linear-ind2}] \label{thm.3}
Let $\lambda=(1,n-1)$ and assume $h$ is a Hessenberg function such that $h(1)<n-1$, $\deg f_\lambda^{(n-1)} = \deg f_\lambda^{(n-2)}$, and $h(i)>i$ for all $i$.  Then the union of the $\Sn$-orbits of $f_{\lambda}^{(n-1)}$ and $f_{\lambda}^{(n-2)}$, i.e., the set
\[
\{w \cdot f_{\lambda}^{(n-1)} \mid w\in \Sn\} \cup \{w\cdot f_\lambda^{(n-2)} \mid w\in \Sn\},
\]
 is $H_T^*(\pt)$-linearly independent.
In particular, the $H_T^*(\pt)$-submodule of $H_T^*(\Hess(\mathsf{S},h))$ spanned by this union of $\Sn$-orbits is an $\Sn$-subrepresentation isomorphic to the direct sum of two copies of the permutation representation with same character as $\mathrm{Ind}_{S_\lambda}^{\Sn}(\mathbf{1})\simeq  M^{(n-1,1)}$.
\end{THM}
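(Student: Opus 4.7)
The plan is to reduce the linear independence of the union of the two dot-orbits to the already-established linear independence of each orbit individually (from Theorem~\ref{theorem: orbit independence}), by exploiting an asymmetry between the supports of the two types of classes. First I would observe that, since $\lambda_2 = n-1$ in the case $\lambda=(1,n-1)$, the class $f_\lambda^{(n-1)}$ is a top-coset class in the sense of Section~\ref{sec.top.coset}, so its support is the single right coset $S_\lambda v_{n-1}$. Under the dot action the orbit $\{w\cdot f_\lambda^{(n-1)} : w\in \Sn\}$ therefore consists of $n$ classes whose supports are pairwise disjoint and together tile $\Sn$ by its $n$ right cosets of $S_\lambda$. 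By Lemma~\ref{lem.left-coset-support}, the support of $f_\lambda^{(n-2)}$ is the strictly larger union $\{yv : v\geq v_{n-2},\; y\in S_\lambda\}$ of several right cosets, since $v_{n-2} < v_{n-1}$ in Bruhat order.

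Suppose now that a putative dependence
\[
\sum_{w\in \Sn} a_w (w\cdot f_\lambda^{(n-1)}) \;+\; \sum_{w\in \Sn} b_w (w\cdot f_\lambda^{(n-2)}) \;=\; 0
\]
holds with $a_w, b_w \in H_T^*(\pt)$. Restricting this identity to a single right coset $C = uS_\lambda$ of $S_\lambda$, exactly one element of the first orbit---call it $w_C\cdot f_\lambda^{(n-1)}$---has nonzero restriction, so we obtain
\[
a_{w_C}\,(w_C\cdot f_\lambda^{(n-1)})\big|_C \;=\; -\sum_{w\in \Sn} b_w\,(w\cdot f_\lambda^{(n-2)})\big|_C
\]
for every coset $C$. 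If I can show that for each $C$ the polynomial function $(w_C\cdot f_\lambda^{(n-1)})|_C$ does not lie in the $H_T^*(\pt)$-span of $\{(w\cdot f_\lambda^{(n-2)})|_C : w\in \Sn\}$, it will follow that every $a_w$ vanishes, and the surviving relation $\sum b_w(w\cdot f_\lambda^{(n-2)}) = 0$ will then force all $b_w = 0$ by Theorem~\ref{theorem: orbit independence}.

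The main obstacle is precisely this non-containment step, since the equal-degree hypothesis $\deg f_\lambda^{(n-1)} = \deg f_\lambda^{(n-2)}$ rules out any crude degree argument. My approach would be to exploit the explicit factorization of both classes as products of linear forms indexed by $\mathcal{S}_{n-1}$ and $\mathcal{S}_{n-2}$: the combinatorial difference between these root sets, controlled by the hypotheses $h(1)<n-1$ and $h(i)>i$ for all $i$, should allow me to identify a specific linear factor $t_a - t_b$ that divides $(w_C\cdot f_\lambda^{(n-1)})(v)$ at some $T$-fixed point $v\in C$ but divides none of the $(w\cdot f_\lambda^{(n-2)})(v)$. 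Alternatively, I would consider carefully chosen specializations of the $t_i$'s that make the first class nonvanishing while annihilating every class in the second orbit (or vice versa). The combinatorial bookkeeping required to track which permutations $w$ actually contribute to the support of a given coset $C$, and to exhibit such a separating linear factor uniformly across cosets, is where I expect the main technical work to lie; once it is in place, the two reductions above yield the theorem immediately, and the identification of the spanned subrepresentation follows because each orbit already has the character of $M^{(n-1,1)}$ by Theorem~\ref{theorem: orbit independence}.
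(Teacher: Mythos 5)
Your proposal is a strategy outline rather than a proof, and beyond the admitted missing ``non-containment'' step it contains two genuine gaps. First, the coset bookkeeping is off. By Lemma~\ref{lem.left-coset-support}, the supports of the $n$ classes in the orbit of $f_\lambda^{(n-1)}$ are the left cosets of $v_{n-1}^{-1}S_\lambda v_{n-1}=\Stab(n)$, i.e.\ the blocks $\{w: w(n)=q\}$, not right cosets of $S_\lambda$ (which are the blocks $\{w:w^{-1}(1)=p\}$). If you restrict to a right coset of $S_\lambda$ other than $S_\lambda v_{n-1}$ itself, then $n-1$ of the classes $w\cdot f_\lambda^{(n-1)}$ are nonzero there, so ``exactly one element of the first orbit survives'' fails; if instead you restrict to a block $C=\{w:w(n)=q\}$, then exactly one class from the first orbit survives, but \emph{all} $n$ classes $u_i^{-1}\cdot f_\lambda^{(n-2)}$ have nonzero restriction to $C$, since the support of $u_i^{-1}\cdot f_\lambda^{(n-2)}$ is $\{w:w(n-1)=i+1\}\cup\{w:w(n)=i+1\}$ and the first piece always meets $C$. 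The separation statement you would then need is that no nonzero multiple of the restricted top-coset class lies in the span of $n$ restricted classes of equal degree --- far from the ``one separating linear factor'' picture, and essentially the full theorem localized to $C$, so nothing has been reduced.

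Second, even granting non-containment, the inference ``$(w_C\cdot f_\lambda^{(n-1)})|_C$ is not in the span $\Rightarrow a_{w_C}=0$'' is invalid over $H^*_T(\pt)\cong\C[t_1,\dots,t_n]$, which is not a field: a vector outside a submodule can have a nonzero polynomial multiple inside it (in $\C[t]^2$, $(1,0)$ is not a multiple of $(t,0)$, yet $t\cdot(1,0)$ is). You would need the saturated statement that \emph{no nonzero $H^*_T(\pt)$-multiple} of the restriction lies in the span. This is exactly the subtlety the paper's proof is organized around: evaluating the putative relation at the two fixed points $u_{n-1}$ and $s_ju_{n-1}$, where only $f_0$, $f_{n-1}$, $g_0$ survive, does \emph{not} force the coefficients to vanish --- by Proposition~\ref{prop.matrix} it only forces $(c_0,c_{n-1},d_0)$ to be a multiple of the vector of $2\times 2$ minors, all of whose entries are nonzero (indeed one gets the nontrivial relation $c_0=-d_0$ at this stage). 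Vanishing is obtained only by producing a second pair of points, $u_{n-1}$ and $s_{j+1}u_{n-1}$, whose minor vector is $H^*_T(\pt)$-independent of the first, and then propagating through the remaining cosets. So the local obstruction you hope to exhibit does not exist at a single pair of fixed points, and there is no evidence it exists on a single block $C$; the argument must play two different localizations against each other, which is precisely the step your plan omits.
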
 

Example~\ref{ex.(n-2,n-1,n,...,n)} below presents an application of our theorem in the case that $h=(n-2, n-1, n, \ldots, n)$.  Although we do not have a complete description of a permutation basis for $H_T^*(\Hess(\mathsf{S},h))$ in that case, our results do yield a basis for the two copies of $M^{(n-1,1)}$ of minimal degree that do occur.  Our example also motivates a statement of a natural follow-up problem which we give in Problem~\ref{problem: linear indep}.

The advantage of the construction from Theorem~\ref{thm.1} is that we obtain explicit combinatorial formulas for the equivariant classes, their support sets, and a clear description of the dot action on each $f_\lambda^{(k)}$.  It is worth emphasizing that this kind of information can be quite difficult to obtain when the classes are defined geometrically.  Moreover, it is this information that gives us the leverage needed to prove our the main theorems regarding $H^*_T(\pt)$-linear independence.  On the other hand, these linear independence results apply only in special cases, particularly the results of Theorem~\ref{thm.3}. In the recent preprint \cite{Cho-Hong-Lee2020}, Cho, Hong, and Lee give a geometric construction of a basis for $H_T^*(\Hess(\mathsf{S}, h))$ in all cases, by using an affine paving of that variety. Although these ``geometric'' classes are linearly independent, they do not in general form a permutation basis with respect to the dot action, and there is no known general, explicit combinatorial formula for the values of these classes at different permutations $w \in \Sn$, except for the special case $h=(2,3,\ldots, n,n)$.  Therefore, it is currently a compelling open question to express our classes -- which are defined purely combinatorially -- in terms of the basis constructed geometrically in~\cite{Cho-Hong-Lee2020}, particularly in the abelian case.  We discuss this further in Section~\ref{sec.class.defn}; see Problem~\ref{prob.BB-classes} below.

We now give a brief overview the contents of this paper.  Section~\ref{sec:background} discusses relevant background material, including the presentation of $H_T^*(\Hess(\mathsf{S}, h))$ via GKM theory, and an overview of useful facts regarding the combinatorics of $\Sn$ and its coset decompositions.  In addition, we provide in Section~\ref{subsec: perm basis program} an expository account of the broader context in which our manuscript should be placed. In particular, we give a clear statement of what we call the ``permutation basis program'', which seeks to solve the Stanley-Stembridge conjecture using the geometry of Hessenberg varieties. In Section~\ref{sec.top.coset},  although the construction of top-coset classes is known to the experts, we formalize the presentation of these equivariant classes. We then define the equivariant cohomology classes studied in this manuscript in Section~\ref{sec.class.defn}.  Our main theorem proves that, under some minor assumptions on the Hessenberg function $h$, these are well-defined classes in $H_T^*(\Hess(\mathsf{S}, h))$ and we are able to give an explicit description of their supports.  We state the problem of connecting our GKM classes to those defined by Cho, Hong, and Lee in Problem~\ref{prob.BB-classes}.  Sections~\ref{subsec: LI for fk} and~\ref{sec.union.orbits} prove the linear independence results appearing in Theorem~\ref{thm.2} and Theorem~\ref{thm.3}, respectively.  We conclude Section~\ref{sec.union.orbits} with an application of our linear independence theorems to the case of $h=(n-2,n-1, n,\ldots, n)$ and the statement of a natural question, to be analyzed in future work, in Problem~\ref{problem: linear indep}.

\medskip
\noindent \textbf{Acknowledgements.} 
We are grateful to the hospitality of the Mathematical Sciences Research Institute in Berkeley, California, and the Osaka City University Advanced Mathematical Institute in Osaka, Japan, where parts of this research was conducted. 
Some of the material contained in this paper are based upon work supported by the National Security Agency
under Grant No. H98230-19-1-0119, The Lyda Hill Foundation, The McGovern
Foundation, and Microsoft Research, while the authors were in residence at the
Mathematical Sciences Research Institute in Berkeley, California, during the summer of 2019.
The first author is supported by a Natural Science and Engineering Research Council Discovery Grant and a Canada Research Chair (Tier 2) from the Government of Canada. The second author is supported in part by NSF DMS-1954001. 
The third author is supported in part by NSF DMS-1800773.


\section{Background}\label{sec:background}

In this section we briefly recall some notation and terminology needed for discussion of Hessenberg varieties and their associated cohomology rings. We refer to \cite{Harada-Precup2019} for a more leisurely account. 
In the final subsection, Section~\ref{subsec: perm basis program}, we also give an expository account of the larger context of this paper, and give explicit statements of the broader research problems to which this paper contributes. 

\subsection{Hessenberg varieties, Hessenberg functions, and the type A root system.}\label{subsec: Hess intro}

The (full) flag variety
$\Flags(\C^n)$ is the collection of sequences of nested linear subspaces of $\C^n$:
\[
\Flags(\C^n) := 
\{ V_{\bullet} = (\{0\} \subset  V_1 \subset  V_2 \subset  \cdots \subset V_{n-1} \subset 
 \C^n)  \mid  \dim_{\C}(V_i) = i \ \textrm{ for all } \ i=1,\ldots,n\}. 
\]
A Hessenberg variety in $\Flags(\C^n)$ is specified by two pieces of data: a Hessenberg function and a choice of an element in $\mathfrak{gl} (n,\C)$.  A \textbf{Hessenberg function} is a non-decreasing function $h: [n] \to [n]$.  In this paper, we consider only Hessenberg functions such that $h(i)\geq i$ for all $i\in [n]$ and implicitly make this assumption for all such functions appearing below.
We frequently write a Hessenberg function by listing its values in sequence, i.e., $h = (h(1), h(2), \ldots, h(n))$.
Now let $\mathsf{X}$ be an $n\times n$ matrix in $\mathfrak{gl} (n,\C)$, which we also consider as a linear operator $\C^n \to \C^n$. Then the \textbf{Hessenberg variety} $\Hess(\mathsf{X},h)$ associated to $h$ and $\mathsf{X}$ is defined to be  
\begin{align}\label{eq:def-general Hessenberg}
\Hess(\mathsf{X},h) := \{ V_{\bullet}  \in \Flags(\C^n) \mid  \mathsf{X} V_i \subseteq 
V_{h(i)} \text{ for all } i\in[n]\} \subset  \Flags(\C^n).
\end{align}
In this paper we focus on a special case of Hessenberg varieties. Let $\mathsf{S}$ denote a regular semisimple matrix in $\mathfrak{gl} (n,\C)$, that is, a matrix which is diagonalizable with distinct eigenvalues.  Then we call $\Hess(\mathsf{S},h)$ a
\textbf{regular semisimple Hessenberg variety}. Note that $h(i)\geq i$ for all $i\in [n]$ implies $\Hess(\mathsf{S}, h)$ is nonempty. The equivariant cohomology of $\Hess(\mathsf{S},h)$ is the main object of study in this paper.

Now we set some notation associated to type $A$ root systems. Let $\fh\subseteq \mathfrak{gl}_n(\C)$ denote the Cartan subalgebra of diagonal matrices, and let $t_i$ denote the coordinate function on $\fh$ reading off the $(i,i)$-th matrix entry along the diagonal.  We denote the root system of $\mathfrak{gl}_n(\C)$ by $\Phi:=\{t_i-t_j\mid i,j\in [n], i\neq j\}$, with the subset of positive roots given by 
\begin{equation*}
\Phi^+:=\{t_i-t_j \mid 1\leq i<j\leq n\}.
\end{equation*} 
The negative roots in $\Phi$ are $\Phi^- := \Phi\setminus \Phi^+$,
and we denote the simple positive roots in $\Phi^+$ by 
\[
\Delta := \{ \alpha_i:= t_i-t_{i+1}\mid 1\leq i \leq n-1\}.
\] 
Given $h:[n]\to [n]$ a Hessenberg function, it will be convenient to consider variants of the above terminology which incorporate the data of $h$. In particular, we define the notation 
\begin{equation}\label{eq: def Phi h minus} 
\Phi_h^- := \{t_i-t_j \in \Phi^- \mid i\leq h(j) \}.
\end{equation} 
It is clear that the set $\Phi_h^-$ is determined by the Hessenberg function $h$, but it is useful to 
also note that $h$ is uniquely determined by $\Phi_h^-$ (since $h(i)\geq i$ for all $i\in [n]$). 

We also recall some terminology concerning inversions. 
The Weyl group in Lie type $A$ is the symmetric group $\Sn$ on $n$ letters.  Given a permutation $w \in \Sn$, the  \textbf{inversion set} of $w$ is given by 
\begin{equation}\label{eq: def inversions of w} 
N(w) := \{\gamma\in \Phi^+ \mid w(\gamma)\in \Phi^-\}.
\end{equation} 
Note that $\gamma= t_i-t_j$ is an inversion of $w$ if and only if $i<j$ and $w(i)>w(j)$.  Thus the pair $(i,j)$ is an inversion of the permutation $w$ in the classical sense if and only if $\gamma = t_i - t_j \in N(w)$.  We also set 
$$
N^-(w) := \{\gamma\in \Phi^-\mid w(\gamma) \in \Phi^+ \}.
$$  It is straightforward to see that $w(N^-(w)) = N(w^{-1})$. Let $\ell(w)$ denote the (Bruhat) length function on $\Sn$.  Then $\ell(w)=|N(w)|=|N^-(w)|$.  If $\gamma = t_i-t_j\in \Phi$ then we denote by $s_\gamma$ the transposition of $\Sn$ swapping $i$ and $j$. We do not differentiate between positive and negative roots with this notation, so in particular $s_\gamma=s_{-\gamma}$.  It is well known that $\ell(ws_\gamma)< \ell(w)$ for $\gamma\in \Phi$ if and only if $\gamma\in N^-(w)$ \cite[Sections 1.6-1.7]{Humphreys-Coxeter}. When $\alpha_i\in \Delta$ we write $s_i:= s_{\alpha_i}$ for the simple reflection swapping $i$ and $i+1$.

\subsection{The equivariant cohomology of $\Hess(\mathsf{S}, h)$ and the dot action representation}\label{subsec: dot action}

In this section we briefly recall some facts about the ordinary and equivariant cohomology rings of regular semisimple Hessenberg varieties, and the definition of the dot action representation on these rings. We refer the reader to \cite{Tymoczko2005, Tymoczko2008, Harada-Precup2019} for more details. 
Let $h: [n] \to [n]$ be a Hessenberg function and $\Hess(\mathsf{S},h)$ the regular semisimple
Hessenberg variety associated to $h$. 
The maximal torus $T$ of diagonal matrices in $\text{GL}(n,\C)$ acts on
$\Flags(\C^n)$ preserving $\Hess(\mathsf{S}, h)$ and  
\begin{equation*}
\Hess(\mathsf{S},h)^{\Tn}= \Flags(\C^n)^{\Tn} \cong \Sn
\end{equation*}
where we identify $S_n$ with the permutation flags in $\Flags(\C^n)$. 
In this setting, the localization theorem of torus-equivariant topology
applies and the inclusion map of the fixed point set into $\Hess(\mathsf{S},h)$ induces an injection,
\begin{align*}
  &\iota: H^*_T(\Hess(\mathsf{S},h)) \into
  H^*_T(\Hess(\mathsf{S},h)^T) = \bigoplus_{w\in \Sn}H_T^*(\pt)\cong \bigoplus_{w\in\Sn}\C[t_1,\dots,t_n].
\end{align*}
For $f \in H^*_T(\Hess(\mathsf{S},h))$, since $\iota$ is
injective, by slight abuse of notation we denote also 
by $f$ its image in $H_T^*(\Hess(\mathsf{S}, h)^T)$. 
For each $w\in \Sn$ we denote by $f(w) \in \C[t_1,\ldots,t_n]$ the
$w$-th component of $f$ in the decomposition above. 

Applying results of Goresky--Kottwitz--MacPherson, one obtains the following concrete description
of the image of $\iota$ as in \cite{Tymoczko2008}: 
\begin{equation}\label{GKM for X(h)}
H^*_{\Tn}(\Hess(\mathsf{S},h)) \cong \left\{ f \in \bigoplus_{w\in\Sn} \C[t_1,\dots,t_n] \left|
\begin{matrix} \text{for all $w\in \Sn$ and $\gamma = t_i-t_j \in N^-(w)\cap \Phi_h^-$, }  \\
\text{ $f(w)-f(ws_\gamma)$ is divisible by $w(\gamma)=t_{w(i)}-t_{w(j)}$.}  \end{matrix} \right.\right\}
\end{equation}
We call the condition described in the right hand side of ~\eqref{GKM for X(h)} the \textbf{GKM condition for $\Hess(\mathsf{S},h)$.}  Since the set $N^-(w)\cap \Phi_h^-$ appearing in~\eqref{GKM for X(h)} is used so frequently, we define the notation 
\begin{equation}\label{eq: def N h minus of w} 
N^-_h(w):=N^-(w)\cap \Phi_h^-.
\end{equation}

Motivated by the above, the \textbf{GKM graph} of the regular semisimple Hessenberg variety $\Hess(\mathsf{S},h)$ is defined as the (labelled, directed) graph with vertex set $\Sn$ and  edges
\[
\xymatrix{w \ar[r]^{\;w(\gamma)\;\;\;} &  ws_\gamma }
\]
where $\gamma \in N_h^-(w)$. Note that $w \rightarrow ws_\gamma$ an edge implies $ws_\gamma< w$, as $\gamma\in N^-(w)$.
The set of labels of the directed edges in the GKM graph of $\Hess(\mathsf{S},h)$ with $w$ as a source is
\begin{equation}\label{eq: labels w source} 
w(N_h^-(w)) = w(N^-(w) \cap \Phi_h^-) = w(N^-(w)) \cap w(\Phi_h^-) = N(w^{-1})\cap w(\Phi_h^-)
\end{equation}
where we have used the fact that $w(N^-(w)) = N(w^{-1})$. 

\begin{example}\label{ex.GKMgraph} Let $n=3$ and $h=(2,3,3)$.  The GKM graph of $\Hess(\mathsf{S},h)$ is as follows: 
\[\xymatrix{& s_1s_2s_1 \ar[rd]^{t_1-t_2}\ar[ld]_{t_2-t_3} & \\
s_1s_2 \ar[d]_{t_1-t_3} & & s_2s_1\ar[d]^{t_1-t_3} \\
s_1 \ar[rd]_{t_1-t_2} & & s_2 \ar[ld]^{t_2-t_3} \\
& e &\\
}
\]
\end{example}

The GKM graph is the combinatorial data encoding the set of GKM conditions for $\Hess(\mathsf{S},h)$ on the RHS of~\eqref{GKM for X(h)}. When $h = (n,n, \cdots, n)$, we have $\Phi_h^- = \Phi^-$ and $\Hess(\mathsf{S},h)=\Flags(\C^n)$;   the GKM graph of the flag variety is also called the \textbf{Bruhat graph} of $\Sn$.  In this special case, since $N^-(w)$ is a subset of $\Phi^-$ by definition, we see that the set of edges with $w$ as a source in the GKM graph of $\Flags(\C^n)$ is in one-to-one correspondence with $N^-(w)$. Moreover, in this case, the set of these edge labels is $w(N^-(w))=N(w^{-1})$. We can see from~\eqref{GKM for X(h)} that in order to obtain the GKM graph for $\Hess(\mathsf{S},h)$ from the Bruhat graph, we simply delete the edges corresponding to $\gamma$ with $\gamma \not \in \Phi_h^-$. In summary, there are precisely $|\Phi_h^-|=\dim \Hess(\mathsf{S}, h)$ edges adjacent to $w$ in the GKM graph for $\Hess(\mathsf{S},h)$ and exactly $|N_h^-(w)|$ edges with $w$ as a source.

Now we recall the $\Sn$-action, often called the ``dot action'', on $H^*_{\Tn}(\Hess(\mathsf{S},h))$ and $H^*(\Hess(\mathsf{S},h))$ 
constructed explicitly by 
the third author in \cite{Tymoczko2008}. 
First, we define an
$\Sn$-action on the polynomial ring $\C[t_1,\ldots,t_n]$ in the
standard way by permuting the indices of the variables, i.e.~for $t_i \in \C[t_1,\ldots, t_n]$ and $v \in \Sn$ we define 
$v ( t_i ) := t_{v(i)}$.
This induces an $\Sn$-action on $\C[t_1,\ldots,t_n]$ by
$\C$-linear ring homomorphisms. By~\eqref{GKM for X(h)}, 
an element $f \in H^*_T(\Hess(\mathsf{S},h))$ is specified uniquely 
by a list $(f(w))_{w \in \Sn}$ of polynomials in
$\C[t_1,\ldots,t_n]$ satisfying the GKM conditions.  Given $v \in \Sn$ and $f = (f(w))_{w
  \in \Sn}$, the \textbf{dot action} of $v$ on $f$ is defined by
\begin{equation}\label{def of Tymoczko rep} 
(v\cdot f)(w) := v ( f(v^{-1}w)) \ \textup{ for all } \ w \in \Sn.
\end{equation}
It is straightforward to check that the class $v \cdot f$ also satisfies the GKM conditions, and we therefore obtain a well-defined action of $\Sn$ on $H^*_T(\Hess(\mathsf{S},h))$, called \textbf{the dot action representation}.  
This is a \textit{twisted} group action on equivariant cohomology as it acts non-trivially on the underlying ring of scalars $H_T^*(\pt) \simeq \C[t_1, \ldots, t_n]$---the action on $H_T^*(\pt)$ is the standard action of $\Sn$ on the polynomial ring defined above.
The dot action on the equivariant cohomology $H^*_T(\Hess(\mathsf{S},h))$ induces the dot action on ordinary cohomology $H^*(\Hess(\mathsf{S},h))$ by the forgetful map $\pi: H^*_T(\Hess(\mathsf{S},h)) \to H^*(\Hess(\mathsf{S},h))$. Indeed, the forgetful map is known to be surjective and the dot action preserves the kernel \cite{Tymoczko2008}, hence this induces a well-defined action on $H^*(\Hess(\mathsf{S},h))$. 

\begin{remark}\label{rem.forgetful}
It is known that in the case of regular semisimple Hessenberg varieties, the $T$-equivariant cohomology $H^*_T(\Hess(\mathsf{S},h))$ is a free $H^*_T(\pt)$-module, and that the forgetful map $H^*_T(\Hess(\mathsf{S},h)) \to H^*(\Hess(\mathsf{S},h))$ is the surjection obtained by taking the quotient by the ideal $\langle t_1, t_2,\ldots, t_n \rangle \subseteq H^*_T(\pt) \cong \C[t_1,\ldots,t_n]$. From this it follows that the image of a permutation basis (as a $H^*_T(\pt)$-module) of $H^*_T(\Hess(\mathsf{S},h))$ is a permutation $\C$-basis of $H^*(\Hess(\mathsf{S},h))$.  However, as noted in the introduction, a $H_T^*(\pt)$-linearly independent set need not map to a $\C$-linearly independent set under the natural projection.   
\end{remark}

As discussed in the introduction, Shareshian and Wachs conjectured in \cite{Shareshian-Wachs2016} that the above ``dot action'' representation on $H^*(\Hess(\mathsf{S},h))$ is related to the well-known Stanley--Stembridge conjecture. Specifically, they conjectured a tight relationship between the chromatic Hessenberg function of the incomparability graph of a unit interval order to the dot action on $H^*(\Hess(\mathsf{S},h))$ as defined above; we refer to \cite[Conjecture 10.1]{Shareshian-Wachs2016} for the detailed statement. Shareshian and Wachs' conjecture was proven by Brosnan and Chow \cite{Brosnan-Chow2018}, and independently by Guay-Paquet \cite{Guay-Paquet2016}, in 2015. For the purposes of this paper it suffices to recall that these results imply that the Stanley--Stembridge conjecture would follow from the following conjecture, phrased in terms of the dot action on $H^*(\Hess(\mathsf{S},h))$ (see \cite[Conjecture 10.4]{Shareshian-Wachs2016}). 

\begin{conjecture}\label{conjecture: shareshian wachs}
Let $h: [n] \to [n]$ be a Hessenberg function. Then 
there exists a basis of $H^*(\Hess(\mathsf{S},h))$ that is permuted by the dot action, and such that the stabilizer of each element in the basis is a reflection subgroup.  
\end{conjecture}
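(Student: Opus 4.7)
The plan is to follow the permutation basis program outlined in Section~\ref{subsec: perm basis program}: rather than attempting to construct a basis of $H^*(\Hess(\mathsf{S},h))$ directly, I would work with the free $H_T^*(\pt)$-module $H_T^*(\Hess(\mathsf{S},h))$ and construct a $H_T^*(\pt)$-module basis which is permuted by the dot action with point stabilizers equal to Young subgroups. By Remark~\ref{rem.forgetful}, the forgetful map then descends such a basis to a permutation $\C$-basis of ordinary cohomology with the required reflection stabilizer property. (Note, though, as noted in the introduction, that $H_T^*(\pt)$-linear independence does not automatically yield $\C$-linear independence after quotienting by $\langle t_1,\ldots,t_n\rangle$; this subtlety must be handled carefully, and indicates why the program proceeds through equivariant cohomology rather than directly in ordinary cohomology.)

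First, I would analyze the $\Sn$-character of $H_T^*(\Hess(\mathsf{S},h))$ graded degree by graded degree, using the results of Brosnan--Chow, Guay-Paquet, and Shareshian--Wachs to read off, for each partition $\lambda$ and each degree $d$, the multiplicity $m_{\lambda,d}$ with which $M^\lambda = \mathrm{ind}_{S_\lambda}^{\Sn}(\mathbf{1})$ appears in the dot-action character. The goal then becomes to produce, for each such pair $(\lambda,d)$, exactly $m_{\lambda,d}$ GKM classes of degree $d$ whose stabilizer under the dot action is (conjugate to) $S_\lambda$, and whose collected $\Sn$-orbits form an $H_T^*(\pt)$-basis of $H_T^*(\Hess(\mathsf{S},h))$.

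Next, I would construct these orbits using and extending the combinatorial recipes of this manuscript. Top-coset classes from Section~\ref{sec.top.coset} provide the first ``trivial-support'' orbits, and the classes $f_\lambda^{(k)}$ of Theorem~\ref{thm.1} extend the construction to unions of cosets; I would need a further generalization of $f_\lambda^{(k)}$ allowing compositions $\lambda$ with more than two parts and general choices of left-coset support, with combinatorial degree matching the required $m_{\lambda,d}$. By design, the stabilizer condition is automatic: the defining formula $f_\lambda^{(k)}(yv) = \prod_{t_a-t_b\in\S_k}(t_{y(a)}-t_{y(b)})$ depends on $y$ only through its action on the polynomial factors, so $S_\lambda$-invariance follows from the $S_\lambda$-invariance of the coset $S_\lambda v$ together with a direct check using~\eqref{def of Tymoczko rep}.

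The main obstacle will be proving $H_T^*(\pt)$-linear independence of the union of all these orbits simultaneously, rather than one at a time. Already a single orbit $\{w \cdot f_\lambda^{(k)}\}$ is difficult to control when $k<\lambda_2$ because the supports are unions of cosets and the dot action shuffles the polynomial entries across different $w$'s; distinct orbits, associated to different $(\lambda,d)$, will have overlapping supports, so naive triangularity arguments do not suffice. A promising strategy is to order the orbits by a combined refinement of degree and Bruhat length of the minimal element in the support, and then exhibit, for each orbit, a ``leading'' permutation $w$ such that the value $(w\cdot f)(w)$ divides no other class's value at $w$; this would generalize the arguments underlying Theorem~\ref{theorem: orbit independence} and Theorem~\ref{theorem:linear-ind2}. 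An alternative, more geometric route would be to identify the classes $f_\lambda^{(k)}$ explicitly inside the Cho--Hong--Lee basis coming from an affine paving (cf.\ Problem~\ref{prob.BB-classes}), since the latter is known to be an $H_T^*(\pt)$-basis; however, translating between the combinatorial and geometric descriptions is itself a nontrivial open problem. Failure of either of these strategies -- and the fact that current techniques succeed only in very restrictive settings such as $\lambda=(1,n-1)$ with two values of $k$ -- is precisely why the conjecture remains open.
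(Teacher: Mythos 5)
The statement you are asked to prove is stated in the paper as a \emph{conjecture} (it is Shareshian--Wachs' reformulation of the Stanley--Stembridge conjecture), and the paper offers no proof of it; it only records partial progress. Your proposal is, accordingly, not a proof but a restatement of the paper's own ``permutation basis program'' from Section~\ref{subsec: perm basis program}, with every hard step explicitly deferred. To be concrete about where the gaps are: (i) you have no construction of GKM classes for compositions with more than two parts beyond the top-coset case, nor for general support configurations, and no argument that the classes so constructed occur in the right degrees with the right multiplicities; (ii) you have no proof of $H_T^*(\pt)$-linear independence of the union of all orbits -- the paper itself only achieves this for a single orbit under restrictive hypotheses (Theorem~\ref{theorem: orbit independence}) and for a union of two orbits in the very special case $\lambda=(1,n-1)$ (Theorem~\ref{theorem:linear-ind2}); and (iii) even granting (i) and (ii) for a spanning set, one must show the resulting $H_T^*(\pt)$-linearly independent collection is actually a \emph{basis} of the free module, since only a full module basis is guaranteed by Remark~\ref{rem.forgetful} to project to a $\C$-basis of ordinary cohomology.

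There is also a circularity in your first step that is worth naming. You propose to ``read off, for each partition $\lambda$ and each degree $d$, the multiplicity $m_{\lambda,d}$ with which $M^\lambda$ appears in the dot-action character.'' The character is indeed known by Brosnan--Chow and Guay-Paquet, and it can always be expanded in the characters of the $M^\lambda$ (equivalently, the chromatic quasisymmetric function can be expanded in the $e$-basis), but the assertion that the coefficients $m_{\lambda,d}$ are \emph{nonnegative integers} -- which is what you need in order to interpret them as multiplicities of permutation modules to be realized by orbits of classes -- is precisely the Stanley--Stembridge conjecture, i.e., the statement you are trying to prove. Outside of special cases (e.g., the abelian case treated in \cite{Harada-Precup2019}), this nonnegativity is open, so the program cannot even be initialized in general without assuming the conclusion. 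None of this is a criticism of your understanding of the problem -- your account of the strategy and of the obstacles matches the paper's -- but it must be recorded that no proof has been given.
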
 

The motivation for this manuscript is to take some steps toward addressing Conjecture~\ref{conjecture: shareshian wachs}, but as discussed in the Introduction and due to the observations in Remark~\ref{rem.forgetful}, we opt below to focus exclusively on the \emph{equivariant} version of Conjecture~\ref{conjecture: shareshian wachs}, since a solution to the equivariant version yields a solution to Conjecture~\ref{conjecture: shareshian wachs}.

Before concluding this subsection we make one more simplifying remark.  Recall that a Hessenberg function $h: [n]\to [n]$ is \textbf{connected} if $h(i)>i$ for all $i\in [n-1]$.  This terminology is due in part to the fact that the corresponding regular semisimple Hessenberg variety $\Hess(\mathsf{S}, h)$ is connected if and only if $h$ is connected \cite[Appendix A]{Anderson-Tymoczko2010}. If $h$ is not connected then it is straightforward to argue that the connected components of $\Hess(\mathsf{S}, h)$ are each isomorphic to a direct product of `smaller' connected regular semisimple Hessenberg varieties (see the analogous argument given in \cite[Theorem 4.5]{Drellich-thesis}). In that case, the dot action on $H^*(\Hess(\mathsf{S}, h))$ is induced from the dot action of a reflection subgroup on the cohomology of this connected component (the equivalent statement for chromatic quasisymmetric functions is very well known; c.f.~\cite[Theorem 1.1(B)]{Abreu-Nigro2020}).  Thus, in order to address Conjecture~\ref{conjecture: shareshian wachs} it suffices to consider only those regular semisimple Hessenberg varieties corresponding to connected Hessenberg functions. On the other hand, many of our theorems below hold for Hessenberg functions without this additional restriction.  We therefore note when this assumption is required.

\subsection{Weyl group combinatorics} \label{subsec: weyl group}

We now take a moment to briefly review and set notation regarding combinatorics of $\Sn$. 
Let $\mu = (\mu_1, \mu_2, \ldots, \mu_\ell)$ be a composition of $n$, that is, $\mu_1, \mu_2 \ldots, \mu_\ell$ are positive integers such that $\mu_1+\mu_2+\cdots+\mu_\ell = n$.  Throughout this section, we let $[\mu]_i = \mu_1+\cdots + \mu_i$ for all $i=1, \ldots, \ell$ and set $[\mu]_0 :=0$.  Note that $[\mu]_1=\mu_1$ and $[\mu]_\ell=n$. 

We define the \textbf{Young subgroup corresponding to $\mu$} to be the subgroup 
\[
S_\mu:= \left< s_i \mid i\notin \{ [\mu]_1, [\mu]_2, \ldots, [\mu]_{\ell-1} \} \right> \subseteq \Sn. 
\]
Any subgroup of $\Sn$ generated by simple reflections is of the form $S_\mu$ for some composition $\mu$ of $n$. Moreover, it is well-known that any reflection subgroup of $\Sn$, i.e., a subgroup of $\Sn$ generated by reflections, is conjugate to a Young subgroup $S_\mu$ for some $\mu$. 

In our computations below, we frequently consider the set of right and left cosets, denoted respectively by $S_\mu \backslash \Sn$ and $\Sn/S_\mu$, of a given Young subgroup $S_\mu$.  The \textbf{shortest length right (respectively left) coset representatives} for $S_\mu\backslash \Sn$ (respectively $\Sn/S_\mu$) are defined as follows: 
\[
{^\mu \Sn}:= \{v\in \Sn\mid  v^{-1}(\alpha_i) \in \Phi^+ \ \textup{ for all } \ i\in [n]\setminus \{ [\mu]_1, [\mu]_2, \ldots, [\mu]_{\ell-1} \}  \}
\]
and
\[
\Sn^\mu := \{v\in \Sn \mid v(\alpha_i )  \in \Phi^+ \ \textup{ for all } \ i\in [n]\setminus \{ [\mu]_1, [\mu]_2, \ldots, [\mu]_{\ell-1} \}  \}.
\]
It follows immediately from the definitions above that 
\begin{equation}\label{eq: left and right coset reps inverse}
({^\mu\Sn})^{-1} = \Sn^\mu. 
\end{equation} 

These shortest coset representatives are useful, among other things, for decomposing arbitrary elements of $\Sn$, as the following well-known lemma states \cite[Prop.~1.10]{Humphreys-Coxeter}. 

\begin{lemma}\label{lem.coset-decomp} Let $w\in \Sn$. Then $w$ can be written uniquely as
\begin{enumerate}
\item $w=yv$ for some $y\in S_\mu$ and $v\in {^\mu\Sn}$, and
\item $w=v'y'$ for some $y'\in S_\mu$ and $v'\in \Sn^\mu$.
\end{enumerate} 
Moreover, for such $y, y' \in S_\mu$ and $v \in {^\mu\Sn}$ and $v' \in \Sn^\mu$, we have $\ell(w) = \ell(y)+\ell(v)=\ell(v')+\ell(y')$.
\end{lemma}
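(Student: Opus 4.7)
The plan is to prove part (1) directly, deduce part (2) by inverting, and handle the length additivity as a standard parabolic subgroup argument. This is Proposition 1.10 of Humphreys' Coxeter book and the proof follows the classical Coxeter-theoretic pattern.

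First I would establish the characterization that ${^\mu S_n}$ is precisely the set of minimum-length elements in the right cosets $S_\mu \backslash S_n$. The key observation is that for a simple reflection $s_i$ with $i \notin \{[\mu]_1,\dots,[\mu]_{\ell-1}\}$ (i.e.\ $s_i \in S_\mu$), one has $\ell(s_i v) < \ell(v)$ iff $v^{-1}(\alpha_i) \in \Phi^-$. Thus $v \in {^\mu S_n}$ if and only if $\ell(s_i v) > \ell(v)$ for every generator $s_i$ of $S_\mu$. A short induction on $\ell(y)$ in $S_\mu$ then upgrades this to the statement that $\ell(yv) > \ell(v)$ for every non-identity $y \in S_\mu$, so $v$ is indeed the unique minimum-length element of $S_\mu v$.

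Given $w \in S_n$, I would then define $v$ to be the (unique) minimum-length element in the right coset $S_\mu w$, and set $y := w v^{-1} \in S_\mu$. The previous paragraph gives $v \in {^\mu S_n}$, so $w = yv$ is a decomposition of the required form. For uniqueness, if $w = y_1 v_1 = y_2 v_2$ with $y_i \in S_\mu$ and $v_i \in {^\mu S_n}$, then $v_1, v_2$ lie in the same coset $S_\mu w$ and are both of minimum length, so $v_1 = v_2$, forcing $y_1 = y_2$. The length additivity $\ell(w) = \ell(y) + \ell(v)$ I would prove by induction on $\ell(y)$: writing $y = s_i y'$ with $s_i \in S_\mu$ a simple reflection and $\ell(y) = 1 + \ell(y')$, the inductive hypothesis gives $\ell(y'v) = \ell(y') + \ell(v)$, and since $y'v \in S_\mu v$ satisfies $\ell(s_i(y'v)) > \ell(y'v)$ precisely when $y'v$ does not have $\alpha_i$ as a descent, the fact that $\ell(y)=\ell(s_i y')+1$ combined with the exchange condition forces $\ell(s_i y' v) = \ell(y'v) + 1$, completing the induction.

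Part (2) follows cleanly from part (1) by applying it to $w^{-1}$: write $w^{-1} = yv$ with $y \in S_\mu$, $v \in {^\mu S_n}$. Then $w = v^{-1} y^{-1}$; by the observation \eqref{eq: left and right coset reps inverse} that $({^\mu S_n})^{-1} = S_n^\mu$, we have $v^{-1} \in S_n^\mu$ and $y^{-1} \in S_\mu$, giving the decomposition $w = v' y'$ with $v' := v^{-1}$ and $y' := y^{-1}$. Uniqueness and the length identity $\ell(w) = \ell(v') + \ell(y')$ transfer from part (1) because $\ell(w) = \ell(w^{-1})$ and inversion is a bijection preserving length.

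The only genuinely delicate step is the length additivity in part (1); everything else is essentially bookkeeping. I expect no real obstacle here since the result is classical and well-documented, so in the actual write-up I would likely just cite Humphreys and sketch the minimum-length coset representative characterization for the reader's convenience.
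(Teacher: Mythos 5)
Your proof is correct and is precisely the classical argument for \cite[Prop.~1.10]{Humphreys-Coxeter}, which is all the paper does here — it states the lemma and cites Humphreys without reproducing a proof. The one step you gloss over (that $\ell(s_i y'v)=\ell(y'v)-1$ would force $y'^{-1}(\alpha_i)$ to be a negative root of the parabolic subsystem, contradicting $\ell(s_iy')>\ell(y')$) is exactly the standard exchange-condition computation, so there is no gap.
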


\begin{remark}\label{remark: finding coset reps} 
The factors $y$ and $v$ in the decomposition of $w$ given in Lemma~\ref{lem.coset-decomp}(1) have a straightforward interpretation in terms of the one-line notation of $w$, as we now describe.  In order to obtain the one-line notation for $v$, rearrange the values of $\{[\mu]_i+1, [\mu]_i+2, \ldots, [\mu]_{i+1}\}$ in the one-line notation of $w$ to be in increasing order from left to right, for each $i=0, \ldots, \ell-1$.  The result is the one-line notation for $v$, which is the shortest right coset representative of $w$ in $S_\mu \backslash \Sn$.  Now $y$ is simply the element of $\Sn$ which permutes the sets $\{[\mu]_i+1, [\mu]_i+2, \ldots, [\mu]_{i+1}\}$ to be in the same order that was found in the original $w$, for each $i=0, \ldots, \ell-1$.
Similarly, 
there is also a simple method for obtaining the decomposition $w=v'y'$ in Lemma~\ref{lem.coset-decomp}(2) from the one-line notation of $w$. Specifically, we obtain the one-line notation of $v'$ by rearranging the values in the one-line notation of $w$ in \textup{positions} $\{[\mu]_i+1, [\mu]_i+2, \ldots, [\mu]_{i+1}\}$ in increasing order from left to right for all $i=0, \ldots, \ell-1$.  In this case, $y'$ is the element of $\Sn$ which permutes the sets $\{[\mu]_i+1, [\mu]_i+2, \ldots, [\mu]_{i+1}\}$ into the same \textit{relative order} as those in the one-line notation for $w$ in {positions} $\{[\mu]_i+1, [\mu]_i+2, \ldots, [\mu]_{i+1}\}$ for all $i=0, \ldots, \ell-1$.
\end{remark}

\begin{example} \label{ex: shortest coset one-line}
Let $n=7$ and $\mu = (4,3)$. Let $w = [6 , 4, 1 , 7 , 2, 5 , 3]$.  Write $w=yv$ for $y\in S_\mu$ and $v\in {^\mu \Sn}$.  From Remark~\ref{remark: finding coset reps} we obtain
\[
y = [4,1,2,3,6,7,5] \ \textup{ and } \ v=[5,1,2,6,3,7,4].
\] 
Similarly, we have $w=v'y'$ for $y' \in S_\mu$ and $v' \in \Sn^\mu$ with 
\[
y' = [3,2,1,4,5,7,6]  \ \textup{ and } \   v' =[1,4,6,7,2,3,5]. 
\]
\end{example} 

We will also use the unique {(Bruhat) maximal element} contained in ${}^{\mu}\Sn$, and similarly for the set of shortest left coset representatives, which can be described explicitly as follows. Let $w_0 = [n,n-1,\ldots,1]$ denote the maximal element of $\Sn$, i.e.~the longest permutation of $S_n$.  Then the maximal element of ${^\mu\Sn}$, denoted herein as $v_\mu$, is the shortest right coset representative of the right coset $S_\mu w_0$ (see \cite[Prop.~2.5.1]{Bjorner-Brenti2005}).  For example, if $n=7$ and $\mu=(4,3)$ as in Example~\ref{ex: shortest coset one-line} then 
\[
v_\mu = [5,6,7,1,2,3,4]. 
\]
Note also that the maximal element of $\Sn^\mu$ is $v_\mu^{-1}$.
From this description of $v_\mu$, the following is straightforward. 
\begin{lemma}\label{lemma: N minus vmu} 
For $\gamma$ a root, we have $\gamma \in N^-(v_\mu)$ if and only if $s_{v_{\mu}(\gamma)} \not \in S_\mu$. 
\end{lemma}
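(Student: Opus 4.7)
The plan is to unpack the explicit combinatorial description of $v_\mu$ and compare the two sides directly. Since $N^-(v_\mu) \subseteq \Phi^-$, it suffices to take $\gamma \in \Phi^-$, writing $\gamma = t_i - t_j$ with $i > j$; then $v_\mu(\gamma) = t_{v_\mu(i)} - t_{v_\mu(j)}$, and $\gamma \in N^-(v_\mu)$ translates to $v_\mu(i) < v_\mu(j)$.

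First I would make the value-block structure precise. The Young subgroup $S_\mu$ is generated by the simple reflections $s_k$ for $k \notin \{[\mu]_1, \ldots, [\mu]_{\ell-1}\}$, so a reflection $s_{t_a - t_b}$ lies in $S_\mu$ if and only if $a$ and $b$ belong to the same value block $\{[\mu]_{k-1}+1, \ldots, [\mu]_k\}$. Hence $s_{v_\mu(\gamma)} \notin S_\mu$ if and only if $v_\mu(i)$ and $v_\mu(j)$ lie in different value blocks.

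Next I would use that $v_\mu$ is the shortest representative of $S_\mu w_0$. Left multiplication by $S_\mu$ permutes values within each block, so every element of $S_\mu w_0$ sends the positions $\{n - [\mu]_k + 1, \ldots, n - [\mu]_{k-1}\}$ to the values in the $k$-th block (these positions being precisely where $w_0$ places those values). The shortest-coset condition $v_\mu \in {}^\mu \Sn$ then forces the values within each block to appear in increasing positional order, giving the explicit recipe $v_\mu(n - [\mu]_k + r) = [\mu]_{k-1} + r$ for $1 \le r \le \mu_k$. The key observation this produces is that value blocks with larger index $k$ occupy strictly \emph{earlier} positions than those with smaller index.

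With this in hand, the biconditional follows by direct inspection. If $v_\mu(i)$ and $v_\mu(j)$ lie in the same value block, the monotone ordering within blocks together with $i > j$ forces $v_\mu(i) > v_\mu(j)$, so $\gamma \notin N^-(v_\mu)$. Conversely, if $v_\mu(i)$ lies in block $k$ and $v_\mu(j)$ in block $k'$ with $k \neq k'$, the observation above converts $i > j$ into $k < k'$, so $v_\mu(i) \le [\mu]_k \le [\mu]_{k'-1} < v_\mu(j)$ and hence $\gamma \in N^-(v_\mu)$. There is no substantive obstacle here; the only task is to articulate the block structure carefully and let the combinatorics do the rest.
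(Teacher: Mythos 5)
Your proof is correct and follows the route the paper intends: the paper states this lemma without proof, calling it straightforward from the description of $v_\mu$ as the shortest representative of the coset $S_\mu w_0$, and your block-by-block analysis of that description (positions of the $k$-th value block, increasing order within blocks, reversal of block order) is exactly the missing verification. One small caveat: your opening reduction ``it suffices to take $\gamma \in \Phi^-$'' is not literally valid for the biconditional as stated, since for a positive root $\gamma$ with $-\gamma \in N^-(v_\mu)$ one has $s_{v_\mu(\gamma)} = s_{v_\mu(-\gamma)} \notin S_\mu$ while $\gamma \notin N^-(v_\mu)$; this is an imprecision in the lemma's phrasing rather than in your argument, as the lemma is only ever applied to negative roots, and for $\gamma \in \Phi^-$ your case analysis establishes the equivalence correctly.
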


Given a composition $\mu$ of $n$, let $\mu':=(\mu_{\ell}, \mu_{\ell-1}, \ldots, \mu_{1})$ be the composition obtained by reversing the entries.  For example, if $\mu = (3,4,4)$, we obtain $\mu' = (4,4,3)$.  Note that $[\mu']_i = \mu_\ell + \mu_{\ell-1}+\cdots + \mu_{\ell-i+1}$ for all $i=1, \ldots, \ell$.   The correspondence $\mu \mapsto \mu'$ defines an involution on the set of compositions of $n$, since evidently $(\mu')' = \mu$. We will also need the following simple lemma.

\begin{lemma}\label{lemma.mu-mu'} 
Let $\mu=(\mu_1, \mu_2, \cdots, \mu_\ell)$ be a composition of $n$ and $\mu' = (\mu_{\ell}, \mu_{\ell-1}, \cdots, \mu_1)$. The maximal element of ${^\mu\Sn}$ and the maximal element of $\Sn^{\mu'}$ are equal, i.e., $v_\mu = v_{\mu'}^{-1}$. 
\end{lemma}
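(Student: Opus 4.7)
The plan is to express both $v_\mu$ and $v_{\mu'}$ in terms of $w_0$ (the longest element of $\Sn$) together with the longest elements $w_0^\mu$ and $w_0^{\mu'}$ of the Young subgroups $S_\mu$ and $S_{\mu'}$, and then reduce the lemma to the statement that conjugation by $w_0$ carries $S_\mu$ to $S_{\mu'}$. First I would record the identity $v_\mu = w_0^\mu w_0$. By the paragraph immediately preceding the lemma, $v_\mu$ is the shortest element of the right coset $S_\mu w_0 = \{y w_0 \mid y\in S_\mu\}$. Since $\ell(y w_0) = \ell(w_0) - \ell(y)$ for every $y \in \Sn$, this minimum is attained precisely when $\ell(y)$ is maximal, namely when $y = w_0^\mu$. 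The same argument applied to $\mu'$ gives $v_{\mu'} = w_0^{\mu'} w_0$, and it therefore suffices to prove $v_\mu^{-1} = v_{\mu'}$.

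Next I would show $w_0 S_\mu w_0 = S_{\mu'}$ (using $w_0 = w_0^{-1}$). This follows from the standard identity $w_0 s_i w_0 = s_{n-i}$, which is a direct consequence of $w_0(\alpha_i) = -\alpha_{n-i}$. Conjugation by $w_0$ therefore maps the simple reflection generators of $S_\mu$, indexed by $\{i \in [n-1] \mid i \notin \{[\mu]_1,\ldots,[\mu]_{\ell-1}\}\}$, bijectively onto $\{s_j \mid j \in [n-1], \, j \notin \{n-[\mu]_1,\ldots,n-[\mu]_{\ell-1}\}\}$. The elementary identity $n - [\mu]_i = [\mu']_{\ell-i}$, immediate from $[\mu']_j = \mu_\ell + \cdots + \mu_{\ell-j+1}$, shows that the resulting index set equals $\{j \in [n-1] \mid j \notin \{[\mu']_1,\ldots,[\mu']_{\ell-1}\}\}$, so that the image generators are exactly those of $S_{\mu'}$.

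Because conjugation by $w_0$ is a group isomorphism $S_\mu \to S_{\mu'}$ that sends simple generators to simple generators, it preserves length on these parabolic subgroups and thus carries $w_0^\mu$ to $w_0^{\mu'}$. Combining the pieces yields
\[
v_\mu^{-1} = (w_0^\mu w_0)^{-1} = w_0\, w_0^\mu = (w_0 w_0^\mu w_0)\, w_0 = w_0^{\mu'} w_0 = v_{\mu'},
\]
as required. I do not anticipate any substantive obstacle; the only point requiring real care is the bookkeeping between the index sets for $S_\mu$ and $S_{\mu'}$, which collapses into the routine identity $n - [\mu]_i = [\mu']_{\ell-i}$.
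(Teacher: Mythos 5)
Your proof is correct, and it takes a genuinely different route from the paper's. The paper proves the lemma by writing down the one-line notation of $v_\mu$ and $v_{\mu'}$ explicitly --- namely $v_\mu([\mu']_{\ell-i-1}+j) = [\mu]_i+j$ and $v_{\mu'}([\mu]_i+j)=[\mu']_{\ell-i-1}+j$ --- and observing directly that these two permutations are mutually inverse. You instead work structurally: you use the identity $v_\mu = w_0^\mu w_0$ (which follows, as you say, from $v_\mu$ being the shortest element of $S_\mu w_0$ together with $\ell(yw_0)=\ell(w_0)-\ell(y)$), the fact that conjugation by $w_0$ sends $s_i$ to $s_{n-i}$ and hence carries $S_\mu$ isomorphically onto $S_{\mu'}$ preserving length, and the involutivity of longest elements, to get $v_\mu^{-1} = w_0 w_0^\mu = w_0^{\mu'} w_0 = v_{\mu'}$. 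Both arguments are complete; the trade-off is that your approach is coordinate-free and would transfer verbatim to other Weyl groups (with $s_{n-i}$ replaced by the image of $s_i$ under the diagram automorphism induced by $-w_0$), whereas the paper's computation has the side benefit of establishing the explicit formulas \eqref{eq: vmu formula} and \eqref{eq: vmuprime formula}, which are reused later (for instance in the proof of Lemma~\ref{lemma.right-to-left-top-coset}). If you adopted your argument in the paper, those formulas would still need to be recorded separately.
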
 

\begin{proof} 
 It follows from the discussion above that 
\begin{equation}\label{eq: vmu formula} 
v_\mu ([\mu']_{\ell-i-1}+j) = [\mu]_i+j \  \textup{ for all } \  0\leq i\leq \ell-1 \textup{ and } 1\leq j \leq \mu_{i+1}.
\end{equation} 
Similarly, 
\begin{equation}\label{eq: vmuprime formula} 
v_{\mu'}([\mu]_{i}+j) = [\mu']_{\ell-i-1} +j \  \textup{ for all } \  0\leq i\leq \ell-1 \textup{ and } 1\leq j \leq \mu_{i+1}.
\end{equation} 
It follows from these formulas that $v_\mu = v_{\mu'}^{-1}$, as desired.  The desired result now follows from the fact that the maximal element element of $\Sn^{\mu'}$ is $v_{\mu'}^{-1}$ by~\eqref{eq: left and right coset reps inverse}.
\end{proof}

The next lemma is completely elementary; we include the statement since we use it repeatedly. 

\begin{lemma}\label{lemma.right-to-left}  Let $W$ be a group and $H\subseteq W$ a subgroup. Suppose $H\sigma$ is a right coset of $H$ in $W$, and consider the subgroup $H_\sigma:= \sigma^{-1}H \sigma$. 
Then there is a well-defined bijection
\[
\phi_\sigma: H\backslash W \to W/H_\sigma,\;\; \phi_\sigma (H\tau) = \tau^{-1}\sigma H_\sigma.
\]
Moreover, given two right cosets $H\sigma_1$ and $H\sigma_2$ of $H$ in $W$, we have $\phi_{\sigma_1}(H\tau) \cap \phi_{\sigma_2}(H\tau)\neq \emptyset$ if and only if $H\sigma_1=H\sigma_2$, and therefore the left cosets $\phi_{\sigma_1}(H\tau)$ and $\phi_{\sigma_2}(H\tau)$ of the (possibly distinct) subgroups $H_{\sigma_1}$ and $H_{\sigma_2}$ in $W$ are either disjoint or equal.
\end{lemma}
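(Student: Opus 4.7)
The lemma is purely group-theoretic and the proof is essentially a careful unpacking of definitions; I expect no surprises. Here is how I would structure it.

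For part (1), the plan is to first verify that $\phi_\sigma$ is well-defined, then exhibit an explicit two-sided inverse to prove it is a bijection. To check well-definedness, suppose $H\tau = H\tau'$, so $\tau' = h\tau$ for some $h\in H$. Then
\[
(\tau')^{-1}\sigma H_\sigma \;=\; \tau^{-1}h^{-1}\sigma H_\sigma \;=\; \tau^{-1}\sigma\bigl(\sigma^{-1}h^{-1}\sigma\bigr) H_\sigma \;=\; \tau^{-1}\sigma H_\sigma,
\]
where in the last step we used $\sigma^{-1}h^{-1}\sigma\in \sigma^{-1}H\sigma = H_\sigma$. For the bijection, I would define a candidate inverse $\psi_\sigma: W/H_\sigma \to H\backslash W$ by $\psi_\sigma(\omega H_\sigma) := H\sigma\omega^{-1}$. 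A parallel calculation (using that $\omega H_\sigma = \omega' H_\sigma$ implies $\omega^{-1}\omega' \in H_\sigma$, hence $\sigma\omega^{-1}(\sigma\omega'^{-1})^{-1}\in H$) shows $\psi_\sigma$ is well-defined, and the composites $\phi_\sigma\circ\psi_\sigma$ and $\psi_\sigma\circ\phi_\sigma$ visibly give the identity on their respective domains.

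For part (2), the plan is to unpack the condition that the two left cosets $\tau^{-1}\sigma_1 H_{\sigma_1}$ and $\tau^{-1}\sigma_2 H_{\sigma_2}$ share a common element. Such a common element gives $h_i \in H$ with $\tau^{-1}\sigma_1 (\sigma_1^{-1}h_1\sigma_1) = \tau^{-1}\sigma_2(\sigma_2^{-1}h_2\sigma_2)$, which simplifies to $h_1\sigma_1 = h_2\sigma_2$, and hence $H\sigma_1 = H\sigma_2$. Conversely, if $H\sigma_1 = H\sigma_2$ then $\sigma_1 = h\sigma_2$ for some $h\in H$, from which one directly verifies both $H_{\sigma_1} = \sigma_2^{-1}h^{-1}H h\sigma_2 = H_{\sigma_2}$ and $\tau^{-1}\sigma_1 = \tau^{-1}\sigma_2\cdot(\sigma_2^{-1}h\sigma_2) \in \tau^{-1}\sigma_2 H_{\sigma_2}$, so the two left cosets coincide. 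This establishes the dichotomy (disjoint or equal) claimed in the statement.

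There is no genuine obstacle here, since every step reduces to an identity of the form $x\in \sigma^{-1}H\sigma \iff \sigma x \sigma^{-1}\in H$. The only small subtlety to keep track of is that $H_{\sigma_1}$ and $H_{\sigma_2}$ are \emph{a priori} different subgroups of $W$, so in part (2) I will take care to verify that they coincide whenever $H\sigma_1 = H\sigma_2$ before comparing the cosets.
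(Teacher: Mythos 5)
Your proof is correct and complete. Note that the paper itself gives no proof of this lemma — it is stated as "completely elementary" and left unproved — so there is nothing to compare against; your verification (well-definedness via $\sigma^{-1}h^{-1}\sigma\in H_\sigma$, the explicit inverse $\psi_\sigma$, and the careful check that $H_{\sigma_1}=H_{\sigma_2}$ when $H\sigma_1=H\sigma_2$) is exactly the routine unpacking the authors had in mind.
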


We apply Lemma~\ref{lemma.right-to-left} below to obtain a bijection between the right cosets $S_\mu \backslash \Sn$ and left cosets $\Sn/ S_{\mu'}$.  We use this correspondence in the following sections to give a simple description of the GKM classes defined therein.  In the special case where $v$ is maximal element of ${^\mu\Sn}$, the map $\phi_v$ from Lemma~\ref{lemma.right-to-left} further induces a bijection between shortest coset representatives.

\begin{lemma}\label{lemma.right-to-left-top-coset}  Let $\mu$ be a composition of $n$ and $v_\mu$ denote the maximal-length element of ${^\mu\Sn}$.  There is a well-defined bijection 
\begin{eqnarray}\label{eqn.left-to-right}
\phi_{v_\mu}: {^\mu\Sn} \longrightarrow \Sn^{\mu'};\; v\mapsto v^{-1}v_\mu,
\end{eqnarray}
and moreover, we have $v_\mu^{-1} S_\mu v_\mu = S_{\mu'}$.
\end{lemma}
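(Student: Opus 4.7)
The plan is to first establish part (2)---the conjugation identity $v_\mu^{-1} S_\mu v_\mu = S_{\mu'}$---since this is exactly the hypothesis needed to apply Lemma~\ref{lemma.right-to-left} with $H = S_\mu$ and $\sigma = v_\mu$. Once part (2) is in hand, part (1) will follow by observing that the bijection $S_\mu\backslash S_n \to S_n/S_{\mu'}$ provided by Lemma~\ref{lemma.right-to-left} restricts to a bijection of shortest coset representatives.

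For part (2), I would work generator-by-generator, using the one-line formulas~\eqref{eq: vmu formula} and~\eqref{eq: vmuprime formula} together with the identity $v_\mu^{-1} = v_{\mu'}$ from Lemma~\ref{lemma.mu-mu'}. Every generator of $S_\mu$ is a simple reflection $s_k$ with $k = [\mu]_i + j$ for some $0 \leq i \leq \ell-1$ and $1 \leq j \leq \mu_{i+1}-1$. Since $k$ and $k+1$ lie in the same block of $\mu$, the formula for $v_{\mu'} = v_\mu^{-1}$ gives $v_\mu^{-1}(k+1) = v_\mu^{-1}(k)+1 = [\mu']_{\ell-i-1} + j + 1$. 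Hence the transposition $v_\mu^{-1} s_k v_\mu$ is the simple reflection $s_{[\mu']_{\ell-i-1}+j}$, which is a generator of $S_{\mu'}$. This shows $v_\mu^{-1} S_\mu v_\mu \subseteq S_{\mu'}$; equality follows from $|S_\mu| = |S_{\mu'}|$.

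For part (1), Lemma~\ref{lemma.right-to-left} together with part (2) yields a bijection $S_\mu\backslash S_n \to S_n/S_{\mu'}$ sending the coset $S_\mu v$ to the coset $v^{-1} v_\mu S_{\mu'}$. Under the natural identifications of ${}^\mu S_n$ with $S_\mu\backslash S_n$ and of $S_n^{\mu'}$ with $S_n/S_{\mu'}$, it then suffices to check that for any $v \in {}^\mu S_n$, the element $v^{-1} v_\mu$ already lies in $S_n^{\mu'}$. To verify this, I would pick any simple root $\alpha_i$ with $s_i$ a generator of $S_{\mu'}$---equivalently, $i = [\mu']_{\ell-k-1} + j$ for some $0 \leq k \leq \ell-1$ and $1 \leq j \leq \mu_{k+1}-1$---and compute $v_\mu(\alpha_i) = \alpha_{[\mu]_k + j}$ directly from~\eqref{eq: vmu formula}. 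Since $s_{[\mu]_k + j}$ is a generator of $S_\mu$, the defining property of ${}^\mu S_n$ yields $v^{-1}(\alpha_{[\mu]_k+j}) \in \Phi^+$, so $(v^{-1}v_\mu)(\alpha_i) \in \Phi^+$, as needed.

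The main obstacle is purely bookkeeping: one must carefully align the blocks of $\mu$ with those of $\mu'$ under the reversal involution, and keep track of how $v_\mu$ and $v_\mu^{-1}$ transport positions between them. Once the one-line formulas~\eqref{eq: vmu formula} and~\eqref{eq: vmuprime formula} are invoked, both parts reduce to straightforward checks on simple reflections and simple roots, so I do not expect conceptual difficulty beyond this indexing care.
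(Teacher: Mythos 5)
Your proposal is correct and follows essentially the same route as the paper: both arguments reduce everything to the one-line formulas~\eqref{eq: vmu formula} and~\eqref{eq: vmuprime formula}, verify that $v^{-1}v_\mu$ sends the simple roots indexing generators of $S_{\mu'}$ to positive roots using the defining property of ${^\mu\Sn}$, and invoke Lemma~\ref{lemma.right-to-left} for bijectivity. The only differences are presentational --- you prove the conjugation statement first and add an explicit cardinality argument for equality, whereas the paper proves the bijection first and phrases the key positivity check in terms of positions in one-line notation --- neither of which changes the substance.
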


\begin{proof} 
Note that $\phi_{v_\mu}$ can be viewed as a restriction to coset representatives of the bijection 
defined in Lemma~\ref{lemma.right-to-left} at the level of cosets.  To prove the desired result, we need only show that this restriction to shortest coset representatives is well-defined.  Let $v\in {^\mu\Sn}$. 
To show that $\phi_{v_\mu}$ is well-defined we need to show $v^{-1}v_\mu \in \Sn^{\mu'}$. To see this, let 
\[
k\in [n]\setminus \{[\mu']_1, [\mu']_2 , \ldots, [\mu']_{\ell-1}\} = [n] \setminus \{\mu_\ell, \mu_\ell + \mu_{\ell-1}, \ldots, \mu_\ell + \cdots+\mu_2\}. 
\]
By definition of $\Sn^{\mu'}$ it now suffices to show 
$v^{-1}v_\mu (\alpha_k) \in \Phi^+$, i.e.~that $v^{-1}v_\mu(k)<v^{-1}v_\mu(k+1)$.  By our assumptions, it follows that we can write $k = [\mu']_i +j$ for some $0\leq i \leq \ell-1$ and $1\leq j < \mu_{\ell-i}$.  The formula~\eqref{eq: vmu formula} implies 
\[
v^{-1}v_\mu(k) = v^{-1}([\mu]_{\ell-i-1}+j) \ \textup{ and } \ v^{-1}v_\mu(k+1) = v^{-1}([\mu]_{\ell-i-1}+j+1).
\]
Now $v^{-1}([\mu]_{\ell-i-1}+j)$ is the position of $[\mu]_{\ell-i-1}+j$ in the one-line notation of $v$ and $v^{-1}([\mu]_{\ell-i-1}+(j+1))$ is the position of $[\mu]_{\ell-i-1}+j+1$ in the one-line notation of $v$.  Thus, we have only to show that $[\mu]_{\ell-i-1}+j$ appears before $[\mu]_{\ell-i-1}+j+1$ in the one-line notation of $v$.  But this follows from the fact that $v\in {^\mu\Sn}$ and $[\mu]_{\ell-i-1}+j \in [n]\setminus \{[\mu]_1, [\mu]_2, \ldots, [\mu]_{\ell-1}\}$.  We conclude $\phi_{v_\mu}$ is indeed well-defined.  The fact that $\phi_{v_\mu}$ is bijective now follows from Lemma~\ref{lemma.right-to-left}.

Finally, recall that $S_\mu$ is generated by the simple reflections $s_k$ with $k\notin [n] \setminus \{[\mu]_1, [\mu]_2, \ldots, [\mu]_{\ell-1}\}$.  To prove $v_\mu^{-1} S_\mu v_\mu = S_{\mu'}$, we show that $v_{\mu}^{-1}(\alpha_k)= \alpha_{m}$ for some $m\notin [n] \setminus \{[\mu']_1, [\mu']_2, \ldots, [\mu']_{\ell-1}\}$.  This implies that conjugation by $v_\mu^{-1} = v_{\mu'}$ maps the generators of $S_\mu$ to those of $S_{\mu'}$.  Since $k\in [n] \setminus \{[\mu]_1, [\mu]_2, \ldots, [\mu]_{\ell-1}\}$, we may write $k = [\mu]_i + j$ for some $0\leq i \leq \ell-1$ and $1\leq j < \mu_{i+1}$.  Applying~\eqref{eq: vmuprime formula} we obtain 
\[
v_{\mu'} (k) = [\mu']_{\ell-i-1}+j \ \textup{ and } v_{\mu'}(k+1) = [\mu']_{\ell-i-1}+j+1.
\]
Thus $v_{\mu'}(\alpha_k) = \alpha_m$ for $m = [\mu']_{\ell-i-1}+j$.  Since $[\mu']_{\ell-i-1} = \mu_\ell + \mu_{\ell-1}+\cdots + \mu_{i+1}$ and $j< \mu_{i+1}$ we have $m\notin [n] \setminus \{[\mu']_1, [\mu']_2, \ldots, [\mu']_{\ell-1}\}$ as desired.
\end{proof}

We end this subsection with two facts that will be used later. 
The first lemma below describes a decomposition of the sets $N(w^{-1})$ and $N^-(w)$ associated to a permutation $w \in \Sn$.  We will frequently apply this statement below in the context of Lemma~\ref{lem.coset-decomp}; a proof can be found in \cite[Section 1.7]{Humphreys-Coxeter}.

\begin{lemma}\label{lem: y z factor}
Let $w = yv \in \Sn$ such that $\ell(w) = \ell(y)+\ell(v)$. Then $N(w^{-1}) = N(y^{-1}) \sqcup yN(v^{-1})$ and $N^-(w) = N^-(v)\sqcup v^{-1}N^-(y)$. 
\end{lemma}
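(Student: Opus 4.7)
The plan is to reduce both statements to the standard Coxeter-theoretic identity that for any $w_1, w_2$ with $\ell(w_1 w_2) = \ell(w_1) + \ell(w_2)$, the inversion set factors as
\[
N(w_1 w_2) = N(w_2) \sqcup w_2^{-1} N(w_1),
\]
then translate between $N(\cdot)$ and $N^-(\cdot)$ using the identity $w(N^-(w)) = N(w^{-1})$ already recorded in Section~\ref{subsec: Hess intro}.

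For the first equality, I would start by taking inverses of $w = yv$. Since $\ell(w^{-1}) = \ell(w)$ and $\ell(y^{-1}) = \ell(y)$ and similarly for $v$, the reduced factorization $\ell(w) = \ell(y) + \ell(v)$ transfers to $\ell(v^{-1} y^{-1}) = \ell(v^{-1}) + \ell(y^{-1})$. Applying the standard identity above with $w_1 = v^{-1}$ and $w_2 = y^{-1}$ yields
\[
N(w^{-1}) = N(v^{-1} y^{-1}) = N(y^{-1}) \sqcup y N(v^{-1}),
\]
which is precisely the first claim.

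For the second equality, I would apply $w^{-1} = v^{-1} y^{-1}$ to both sides of the equation just obtained. Using $w^{-1}(N(w^{-1})) = N^-(w)$ on the left and distributing $w^{-1}$ on the right:
\[
N^-(w) = v^{-1} y^{-1} \bigl( N(y^{-1}) \bigr) \;\sqcup\; v^{-1} y^{-1} y \bigl( N(v^{-1}) \bigr) = v^{-1} \bigl( y^{-1} N(y^{-1}) \bigr) \sqcup v^{-1} \bigl( N(v^{-1}) \bigr).
\]
Then the identity $y^{-1}(N(y^{-1})) = N^-(y)$ (and its analogue for $v$) collapses this to $v^{-1} N^-(y) \sqcup N^-(v)$, giving the desired decomposition.

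The only subtle point is verifying that the union in each case is genuinely disjoint; this is automatic from a cardinality count, since $|N^-(w)| = \ell(w) = \ell(y) + \ell(v) = |N^-(y)| + |N^-(v)| = |v^{-1} N^-(y)| + |N^-(v)|$, and similarly for the $N$ version. No step is a substantive obstacle — the argument is essentially a bookkeeping translation between the two flavors of inversion sets via the bijections $w \leftrightarrow w(\cdot)$, given the standard reduced-factorization identity from \cite[Section 1.7]{Humphreys-Coxeter}.
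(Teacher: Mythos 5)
Your argument is correct and is essentially the paper's approach: the paper does not prove this lemma but simply cites \cite[Section 1.7]{Humphreys-Coxeter}, where the standard reduced-factorization identity $N(w_1w_2)=N(w_2)\sqcup w_2^{-1}N(w_1)$ you invoke is established. Your derivation of both equalities from that identity, using $w(N^-(w))=N(w^{-1})$ to pass between the two flavors of inversion sets, is the intended (and correct) bookkeeping.
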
 

\begin{example} 
To illustrate the decomposition $N^-(w) = N^-(v) \sqcup v^{-1}N^-(y)$, consider 
$w = [6, 4, 1, 7, 2, 5, 3]$ as in Example~\ref{ex: shortest coset one-line}. In this case $y=[4,1,2,3,6,7,5]$ and 
$v=[5,1,2,6,3,7,4]$.
Then
\[
N^-(v) = \{ t_2 - t_1, t_3 - t_1, t_5 - t_1, t_7 - t_1, t_5 - t_4, t_7-t_4, t_7-t_6 \}
\]
and $N^-(y) = \{ t_2 - t_1, t_3 - t_1, t_4 - t_1, t_7-t_5, t_7-t_6 \}$ so
\[
v^{-1} N^-(y) = \{ t_3 - t_2, t_5 - t_2, t_7 - t_2, t_6-t_1, t_6-t_4 \}. 
\]
The reader can then check that $N^-(w) = N^-(v) \sqcup v^{-1}N^-(y)$.
\end{example}

We also take a moment to recall a criterion for determining Bruhat order in the
Weyl group $S_n$ (see e.g. \cite{Bjorner-Brenti2005}). 
For \(w \in S_n,\) denote by $D_R(w)$ the right descent set of $w$, namely,
\[
D_R(w) := \{ i \hsm \vert \hsm w(i) > w(i+1), 1 \leq i \leq n-1 \}.
\]
For example, if $w = [3,6,8,4,7,5,9,1,2]$ the
descent set is $D_R(w) = \{3, 5, 7\}$. The following is frequently called the \textbf{tableau criterion}~\cite[Theorem 2.6.3]{Bjorner-Brenti2005}.

\begin{theorem}[The tableau criterion] \label{theorem: tableau criterion}
  For $w,v \in S_n$, let $w_{i,k}$ denote the $i$-th element in the
  increasing rearrangement of $w(1), w(2), \ldots, w(k)$, and
  similarly for $v_{i,k}$. Then $w \leq v$ in Bruhat order if and only
  if $w_{i,k} \leq v_{i,k}$ for all $k \in D_R(w)$ and $1\leq i \leq k$.
\end{theorem}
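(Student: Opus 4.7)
The plan is to prove each direction separately, exploiting the cover/subword structure of Bruhat order on $\Sn$.

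For the forward direction, I would induct on $\ell(v) - \ell(w)$ and reduce to the case of a single Bruhat cover $w \lessdot w'$. Every such cover takes the form $w' = w\,t_{ab}$ where $t_{ab}$ is the transposition of $a < b$, with $w(a) < w(b)$ and no intermediate position $a < c < b$ satisfying $w(a) < w(c) < w(b)$; equivalently, the one-line notation of $w'$ differs from that of $w$ only by exchanging the values in positions $a$ and $b$. For each $k \in [n-1]$, the multiset $\{w'(1), \ldots, w'(k)\}$ either coincides with $\{w(1), \ldots, w(k)\}$ (when $k < a$ or $k \geq b$), or is obtained from it by replacing $w(a)$ with the strictly larger value $w(b)$ (when $a \leq k < b$). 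In either case, the $i$-th smallest element can only stay fixed or increase, which gives $w_{i,k} \leq w'_{i,k}$ for every $i \leq k$. Telescoping along a saturated Bruhat chain $w = w^{(0)} \lessdot w^{(1)} \lessdot \cdots \lessdot w^{(N)} = v$ yields the tableau inequalities for \emph{every} $k \in [n-1]$, which in particular includes the restricted set $k \in D_R(w)$.

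For the backward direction, I would adopt a two-step strategy. First, I would reformulate the tableau condition $w_{i,k} \leq v_{i,k}$ at a fixed $k$ (for all $i \leq k$) as the equivalent rank-matrix condition
\[
|\{j \leq k : w(j) \leq m\}| \;\geq\; |\{j \leq k : v(j) \leq m\}| \quad \text{for all } m \in [n].
\]
It is classical that $w \leq v$ in Bruhat order on $\Sn$ if and only if this rank inequality holds for every pair $(m,k)$, so the task reduces to the purely combinatorial question: do the rank inequalities at $k \in D_R(w)$ force them at every $k \in [n-1]$?

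This propagation is the main obstacle and constitutes the true content of the theorem. At a non-descent position $k$ one has $w(k) < w(k+1)$, so intuitively the local ascent structure of $w$'s one-line notation should let the rank data at neighboring descents control position $k$. Writing $r_w(m,k) := |\{j \leq k : w(j) \leq m\}|$, the identity $r_w(m,k) = r_w(m,k+1) - \mathbf{1}_{\{w(k+1) \leq m\}}$ and its analogue for $v$ show that the naive one-step propagation from $k+1$ down to $k$ fails precisely when $w(k+1) \leq m < v(k+1)$, where one needs the rank inequality at $k+1$ to be strict rather than weak. I expect to resolve this by descending induction on $k$, starting from the largest descent of $w$ and sweeping through each ascent run of $w$ in turn: within such a run, the accumulated strictness contributed by the ascending values of $w$ past position $k$, together with the tableau hypothesis at the next descent $k' > k$ in $D_R(w)$, should produce the required strict inequality wherever it is needed. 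Once the rank inequalities are verified at every $(m,k)$, the classical rank-matrix characterization of Bruhat order yields $w \leq v$ and completes the proof.
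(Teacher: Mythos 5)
First, note that the paper does not prove this statement at all; it quotes it from Bj\"orner--Brenti [Theorem 2.6.3], so the real comparison is with the standard textbook argument, whose architecture your outline follows: Ehresmann's rank-matrix criterion (tableau inequalities for \emph{all} $k$) plus a propagation step showing that the inequalities at the descents of $w$ force them everywhere. Your forward direction is correct and complete as sketched: a cover $w\lessdot w\,t_{ab}$ replaces $w(a)$ by the strictly larger value $w(b)$ in each prefix $\{w(1),\dots,w(k)\}$ with $a\le k<b$ and leaves all other prefixes unchanged, so every $w_{i,k}$ weakly increases along a saturated chain, and your reformulation of the tableau condition as the rank inequalities is also correct.

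The gap is in the propagation step, which you rightly identify as the true content but only gesture at, and the mechanism you describe is insufficient as stated: a one-sided descending sweep from the next descent $k'>k$ cannot work alone. Writing $g(k):=r_w(m,k)-r_v(m,k)$, the step from $k+1$ down to $k$ loses exactly when $g(k+1)=0$ and $w(k+1)\le m<v(k+1)$, and the ascending values of $w$ \emph{above} position $k$ do not by themselves supply the needed strictness. The missing idea is to use the hypothesis at \emph{both} ends of an ascent run, with positions $0$ and $n$ (where $g\equiv 0$) serving as free boundary ``descents.'' Concretely: fix $m$ and suppose $g(k^*)<0$ for some $k^*$. Let $a<k^*$ be maximal with $g(a)\ge 0$ and $b>k^*$ minimal with $g(b)\ge 0$. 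Since $g$ changes by at most $1$ per step, $g(a)=0$ and $g(a+1)=-1$, which forces $w(a+1)>m$; likewise $g(b-1)=-1$ and $g(b)=0$, which forces $w(b)\le m$. Every position strictly between $a$ and $b$ has $g<0$, hence by hypothesis is not in $D_R(w)$, so $w(a+1)<w(a+2)<\cdots<w(b)$; since $a+1<b$ this yields $w(a+1)<w(b)$, contradicting $w(a+1)>m\ge w(b)$. Inserting this lemma in place of your ``accumulated strictness'' heuristic completes the backward direction via the classical rank-matrix criterion you invoke.
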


\subsection{Permutation bases and the Stanley-Stembridge conjecture} \label{subsec: perm basis program}

As we indicated in Section~\ref{subsec: dot action}, the main motivation for this manuscript is the study of the Stanley--Stembridge conjecture, reformulated by Shareshian and Wachs \cite{Shareshian-Wachs2016} into a question about the dot action representation on the cohomology ring $H^*(\Hess(\mathsf{S},h))$ of regular semisimple Hessenberg varieties, as recorded in Conjecture~\ref{conjecture: shareshian wachs} above. To address this problem, we therefore seek to explicitly build permutation bases in $H^*(\Hess(\mathsf{S},h))$ whose stabilizers are reflection subgroups. In fact, in order to achieve this, we first study the analogous question in 
\emph{equivariant} cohomology instead. Specifically, we propose to construct a $H^*_T(\pt)$-module basis of  the free $H^*_T(\pt)$-module $H^*_T(\Hess(\mathsf{S},h))$ consisting of equivariant classes permuted by the dot action and whose stabilizers are reflection subgroups.  We could then project such a basis to ordinary cohomology $H^*(\Hess(\mathsf{S},h))$ using the forgetful map from equivariant to ordinary cohomology.  By Remark~\ref{rem.forgetful}, the projected basis in $H^*(\Hess(\mathsf{S},h))$ would have the desired properties. At first glance, this strategy may seem counterintuitive since equivariant cohomology is much larger than ordinary cohomology, so one may expect the problem to be more difficult. However, as is frequently the case, the additional structure on $H^*_T(\Hess(\mathsf{S},h))$ can frequently make it more tractable (and indeed, as we saw above, the original definition of the dot action was made possible by the GKM theory on equivariant cohomology). 

Based on this point of view, we propose to study the following question: 
\begin{equation}\label{StanleyStembridge Hess version}
\begin{minipage}{0.7\linewidth}
Does there exist a $H^*_T(\pt)$-module basis $\mathcal{B}$ of the free $H^*_T(\pt)$-module $H^*_T(\Hess(\mathsf{S},h))$ which is permuted by the dot action, and such that the stabilizer $\mathrm{Stab}(b) \subseteq S_n$ for any $b \in \mathcal{B}$ is a reflection subgroup? 
\end{minipage}
\end{equation}

The question posed above is well-known among the experts and we do not claim any originality. Moreover, there are already results in the literature which can be interpreted in terms of this question, as we discuss in more detail below. However, as far as we are aware,~\eqref{StanleyStembridge Hess version} has not previously been recorded explicitly in the literature in this form. 
As such we take a moment to discuss the problem and to propose some methods of attack. 

First of all, we expect that GKM theory will be a critical tool for addressing~\eqref{StanleyStembridge Hess version}, just as it was for the original definition of the dot action. There are some inherent challenges in this approach, however. One such challenge is that, in general it is non-trivial to explicitly construct, by purely combinatorial means, an element in the RHS of~\eqref{GKM for X(h)}, i.e., an element in the GKM description of equivariant cohomology. To put it another way, while there do exist formulas for the restrictions to $T$-fixed points of special equivariant cohomology classes of GKM spaces which have, for example, concrete \emph{geometric} descriptions---e.g.~equivariant Schubert classes, or Chern classes of equivariant vector bundles---it is in general difficult to arrive at a purely combinatorial algorithm producing a list of polynomials $(f(w))_{w \in \Sn}$, with $f(w) \in H^*_T(\pt)$, which together satisfy the GKM compatibility (divisibility) conditions. Thus, it is non-trivial to explicitly construct candidates for permutation bases in $H^*_T(\Hess(\mathsf{S}, h))$. Another challenge is that it is difficult in general to prove that a set of GKM classes is $H^*_T(\pt)$-linearly independent, i.e., they satisfy no $H^*_T(\pt)$-linear relations. This is because a GKM class is realized as a vector of polynomials, with coordinates indexed by $T$-fixed points, and the question of linear independence then becomes a complicated linear algebra problem over the polynomial ring $H^*_T(\pt)\simeq \C[t_1, \ldots, t_n]$. This being said, it is not hard to see (and has been noticed before) that if the set has computationally convenient properties, such as ``poset-upper-triangularity'' with respect to Bruhat order on $S_n$ as discussed in \cite{Harada-Tymoczko2017}, then linear independence can be deduced.  However, in the absence of such vanishing properties, the linear algebra over $H^*_T(\pt)$ is not so straightforward. 

Despite these challenges, some results which partly address~\eqref{StanleyStembridge Hess version} already appear in the literature. For instance, Abe, Horiguchi, and Masuda give an explicit presentation of the cohomology ring of $H^*(\Hess(\mathsf{S},h))$ in the special case when $h=(h(1), n, n, \ldots,n)$ in \cite{Abe-Horiguchi-Masuda2019};  their ``$y_i$ classes'', which are a subset of their generators of $H^*(\Hess(\mathsf{S},h))$ in this case, are in fact obtained as images of GKM classes in equivariant cohomology for which they are able to write down an explicit formula. Moreover, it is clear that their ``$y_i$ classes'' form a permutation basis for an $S_n$-subrepresentation in $H^*(\Hess(\mathsf{S},h))$. In another direction, Chow gave in \cite{Chow-conj} a conjectured permutation basis for $H^*(\Hess(\mathsf{S},h))$ in the special case where $h=(2,3,4,\cdots, n-1,n,n)$ (in this case $\Hess(\mathsf{S}, h)$ is the permutohedral variety). Chow's definition of his generators uses the GKM description in equivariant cohomology.  In a recent paper, Cho, Hong, and Lee have shown that Chow's GKM classes have a geometric interpretation in terms of the Bia\l{}ynicki-Birula stratification of the permutohedral variety, and use this to prove Chow's conjecture that these classes are indeed a permutation basis. Thus, this settles the question~\eqref{StanleyStembridge Hess version} in this special case, and it remains to analyze the more general cases.

With the above discussion in mind, we propose to study the following problems, for as a general a Hessenberg function as possible. We refer to this as the ``permutation basis program''.

\begin{PROB} Give a systematic, combinatorial algorithm for constructing GKM classes in $H^*_T(\Hess(\mathsf{S},h))$ beyond those that are already known, and whose stabilizer groups with respect to the dot action are reflection subgroups. 
\end{PROB}

\begin{PROB} Given a GKM class $f \in H^*_T(\Hess(\mathsf{S},h))$, find conditions under which its $\Sn$-orbit 
\[
\{w\cdot f  \mid  w\in \Sn \} 
\]
is $H_T^*(\mathrm{pt})$-linearly independent. 
\end{PROB}

\begin{PROB} Suppose $\{f_\alpha\}_{\alpha \in S}$ is a collection of GKM classes in $H^*_T(\Hess(\mathsf{S},h))$ such that the $\Sn$-orbit of each $f_\alpha$, considered above, is $H^*_T(\pt)$-linearly independent. Find conditions under which the entire collection 
\[
\{ w\cdot f_{\alpha} \mid w\in W \textup{ and $\alpha \in S$ } \}
\]
is $H_T^*(\mathrm{pt})$-linearly independent. 
\end{PROB}

The remainder of this manuscript addresses these problems for a number of special classes of Hessenberg varieties.


\section{GKM classes in $H^*_T(\Hess(\mathsf{S},h))$: the top-coset case} \label{sec.top.coset}

In this and the next section, we address Problem 1 of the ``permutation basis program'' described at the end of  Section~\ref{subsec: perm basis program}.

Specifically, we present in this section a combinatorial construction of GKM classes in $H^*_T(\Hess(\mathsf{S},h))$ which is already well-known to experts and which have the property that the classes evaluate to be non-zero only on a single (``top'' in a suitable sense, to be explained below) coset of a Young subgroup. In particular, we do not claim any originality for the results presented in this section. Then in Section~\ref{sec.class.defn}, we present a variant of this ``top-coset'' construction which results in GKM classes that can be non-zero on more than one coset. We chose this method of exposition for several reasons. First, although the top-coset construction is well-known among experts, as far as we are aware it has not been recorded formally, and in this general form. Second, the intuition behind the construction for both the top-coset case and our construction in Section~\ref{sec.class.defn} is most easily grasped in the top-coset case. Finally, the technical hypotheses on the constructions in this section and the next are such that neither construction is subsumed by the other, so it felt natural to make this distinction clear in the exposition.  We emphasize again that the construction given in the present section has appeared in special cases in the work of Abe--Horiguchi--Masuda \cite{Abe-Horiguchi-Masuda2019}, Chow \cite{Chow-conj}, and Cho--Hong--Lee \cite{Cho-Hong-Lee2020}.

We begin with a lemma which decomposes a certain set of edges in the GKM graph; intuitively, the idea is that some of the edges ``remain'' in a fixed (''top'') coset, while the others point ``down'' toward lower (``non-top'') cosets. The precise statement is in Lemma~\ref{lemma: top-coset-edges}. 
Throughout this section, we fix a composition $\mu=(\mu_1, \mu_2, \ldots, \mu_\ell)$ of $n$ and let $S_\mu$ denote the corresponding Young subgroup. Let $v_\mu$ denote the unique maximal element of ${^\mu\Sn}$ as introduced in Section~\ref{subsec: weyl group}. We refer to the right coset $S_\mu v_\mu$ of $S_\mu$ corresponding to this maximal element as the ``top coset''.  Also recall that edges in the GKM graph with $w$ as a source are indexed by the set $N_h^-(w)$, as in~\eqref{eq: labels w source}. Moreover, by Lemmas~\ref{lem.coset-decomp} and~\ref{lem: y z factor} we know that if $w = yv_\mu$ for $y \in S_\mu$ then $N^-(w) = N^-(v_\mu)\sqcup v_\mu^{-1}N^-(y)$. Thus we have 
\[
N_h^-(w) := N^-(w) \cap \Phi_h^- = (N^-(v_\mu)\sqcup v_\mu^{-1}N^-(y)) \cap \Phi_h^- = 
(N^-(v_\mu) \cap \Phi_h^-) \sqcup (v_\mu^{-1}N^-(y) \cap \Phi_h^-).
\] 
We can now state the lemma.

\begin{lemma}\label{lemma: top-coset-edges} 
Let $w = yv_\mu \in S_\mu v_\mu$ be an element in the top coset of $S_\mu$ where $y \in S_\mu$. Consider an edge of the GKM graph for $\Hess(\mathsf{S},h)$, 
\[
w \xrightarrow{\; w(\gamma) \; }ws_\gamma \ \textup{ for some } \gamma\in N_h^-(w) = 
(N^-(v_\mu) \cap \Phi_h^-) \sqcup (v_\mu^{-1}N^-(y) \cap \Phi_h^-). 
\]
Then 
\begin{enumerate} 
\item if $\gamma\in N^-_h(v_\mu) \cap \Phi_h^-$ then $ws_\gamma \in S_\mu v$ for some $v\in {^\mu\Sn}$ with $v<v_\mu$ and 
\item if $\gamma \in v_\mu^{-1}N^-(y)\cap \Phi_h^-$ then $ws_\gamma\in S_\mu v_\mu$. 
\end{enumerate} 
\end{lemma}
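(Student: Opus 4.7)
The plan is to reduce both parts to a single coset-membership question using the conjugation identity
\[
ws_\gamma \;=\; y v_\mu s_\gamma \;=\; y\bigl(v_\mu s_\gamma v_\mu^{-1}\bigr) v_\mu \;=\; y\, s_{v_\mu(\gamma)}\, v_\mu.
\]
Since $y \in S_\mu$, this element lies in the top coset $S_\mu v_\mu$ precisely when the conjugated reflection $s_{v_\mu(\gamma)}$ lies in $S_\mu$; otherwise it lies in a coset $S_\mu v$ for some $v \in {}^\mu\Sn$ with $v \neq v_\mu$. Both parts of the lemma then become statements about this criterion under the two different hypotheses on $\gamma$.

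For part (1), I would directly invoke Lemma~\ref{lemma: N minus vmu}, which states that $\gamma \in N^-(v_\mu)$ is equivalent to $s_{v_\mu(\gamma)} \notin S_\mu$. This gives immediately that $ws_\gamma \notin S_\mu v_\mu$, so if $v \in {}^\mu\Sn$ is the shortest right coset representative of $ws_\gamma$ then $v \neq v_\mu$. Since $v_\mu$ is the Bruhat maximum of ${}^\mu\Sn$ (a standard fact about parabolic quotients, consistent with the characterization of $v_\mu$ as the shortest coset representative of $S_\mu w_0$), every element of ${}^\mu\Sn$ is $\leq v_\mu$, which forces $v < v_\mu$.

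For part (2), I would write $\gamma = v_\mu^{-1}\beta$ with $\beta \in N^-(y)$, so that $s_{v_\mu(\gamma)} = s_\beta$. The essential ingredient is the classical fact that all inversions of an element of the Young subgroup $S_\mu$ correspond to reflections already in $S_\mu$: concretely, if $y \in S_\mu$ permutes within each block of $\mu$, then for any $\beta = t_i - t_j \in N^-(y)$ the indices $i,j$ must lie in the same block, so $s_\beta \in S_\mu$. Applying the criterion above yields $ys_\beta \in S_\mu$ and hence $ws_\gamma \in S_\mu v_\mu$.

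I do not foresee a significant obstacle: both parts reduce to one-line computations once the conjugation identity is recognized. The only points requiring any care are (a) that ``$v_\mu$ is the unique maximum of ${}^\mu\Sn$'' (rather than merely a maximal element) is what is needed in part (1) to conclude $v < v_\mu$, and (b) that the parabolic stability of inversions is correctly invoked in part (2). Both are standard and are compatible with the facts already recorded in Section~\ref{subsec: weyl group}.
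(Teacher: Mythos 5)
Your proposal is correct and follows essentially the same route as the paper's proof: both use the identity $ws_\gamma = y\,s_{v_\mu(\gamma)}\,v_\mu$, apply Lemma~\ref{lemma: N minus vmu} together with the Bruhat-maximality of $v_\mu$ in ${}^\mu\Sn$ for part (1), and use that inversions of $y \in S_\mu$ give reflections in $S_\mu$ for part (2).
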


\begin{proof} 
Suppose $\gamma\in N^-(v_\mu) \cap \Phi_h^-$.
Then by Lemma~\ref{lemma: N minus vmu} we know $s_{v_\mu(\gamma)} \not \in S_\mu$, so $ws_\gamma = ys_{v_\mu(\gamma)} v_\mu \not \in S_\mu v_\mu$. Hence $w s_\gamma \in S_\mu v$ for some $v \in {^\mu}\Sn$ with $v\neq v_\mu$. Since $v_\mu$ is the unique maximal element of ${^\mu\Sn}$ we get $v < v_\mu$. 
On the other hand, suppose $\gamma \in v_\mu^{-1}N^-(y)\cap \Phi_h^-$. Then $v_\mu(\gamma) \in N^-(y)$ and $y\in S_\mu$ imply that $s_{v_\mu(\gamma)} \in S_\mu$. This in turn means that $w s_\gamma = y v_\mu s_\gamma = y s_{v_\mu(\gamma)} v_\mu$ lies in the top coset $S_\mu v_\mu$. 
This completes the proof. 
\end{proof}

We can now define the top-coset GKM classes.  We provide a proof for the record. 

\begin{proposition}\label{prop: top coset class} 
Let $\mu = (\mu_1, \mu_2, \cdots, \mu_\ell)$ be a composition of $n$ and let $S_\mu$ denote the associated Young subgroup. Let $v_\mu\in {^\mu\Sn}$ denote the maximal-length right coset representative in ${^{\mu}\Sn}$. Let 
\[
f_\mu (w) := \left\{ \begin{array}{cl} \prod_{t_i-t_j\in N^-_h(v_\mu)} (t_{w(i)} - t_{w(j)}) & \textup{ if } w=yv_\mu, \textup{ for some } y\in S_\mu\\
0 & \textup{ otherwise. }   \end{array}\right.
\]
Then $f_\mu \in H_T^{2 |N_h^-(v_\mu)|}(\Hess(\mathsf{S},h))$, or in other words, $f_\mu$ satisfies the GKM conditions of~\eqref{GKM for X(h)}.  Moreover, $y\cdot f_\mu = f_\mu$ for all $y\in S_\mu$.
 \end{proposition}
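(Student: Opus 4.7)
The plan is to separately verify (i) that the formula defining $f_\mu$ satisfies the GKM compatibility conditions of~\eqref{GKM for X(h)}, and (ii) that $f_\mu$ is fixed by the dot action of every element of $S_\mu$. For (i), I would fix $w \in \Sn$ and $\gamma = t_i - t_j \in N_h^-(w)$ and perform a case analysis based on whether $w$ and $ws_\gamma$ each belong to the top coset $S_\mu v_\mu$.

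The case in which neither of $w,\,ws_\gamma$ lies in $S_\mu v_\mu$ is trivial since both values of $f_\mu$ vanish. The case $w \in S_\mu v_\mu$ with $ws_\gamma \notin S_\mu v_\mu$ is handled directly by Lemma~\ref{lemma: top-coset-edges}, which forces $\gamma \in N_h^-(v_\mu)$; hence the factor $t_{w(i)}-t_{w(j)} = w(\gamma)$ occurs in the product defining $f_\mu(w)$, while $f_\mu(ws_\gamma)=0$, and divisibility holds. The case $w \notin S_\mu v_\mu$ but $ws_\gamma \in S_\mu v_\mu$ will turn out to be vacuous: writing $ws_\gamma = y' v_\mu$, the hypothesis $w = y' s_{v_\mu(\gamma)} v_\mu \notin S_\mu v_\mu$ forces $s_{v_\mu(\gamma)} \notin S_\mu$, so Lemma~\ref{lemma: N minus vmu} yields $\gamma \in N^-(v_\mu) \subseteq N^-(ws_\gamma)$ via the decomposition in Lemma~\ref{lem: y z factor}, contradicting $\gamma \in N^-(w)$.

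The main case is when both $w$ and $ws_\gamma$ lie in $S_\mu v_\mu$. Writing $w = y v_\mu$ and $ws_\gamma = y s_{v_\mu(\gamma)} v_\mu$, and setting $\eta := v_\mu(\gamma)$, the fact that $y$ acts as a ring automorphism of $\C[t_1,\ldots,t_n]$ reduces the required divisibility of $f_\mu(w) - f_\mu(ws_\gamma)$ by $w(\gamma) = y(\eta)$ to that of $\prod_{\beta \in \mathcal{S}}\beta - \prod_{\beta \in \mathcal{S}} s_\eta(\beta)$ by $\eta$, where $\mathcal{S} := v_\mu(N_h^-(v_\mu))$. This latter divisibility is a general fact: the reflection $s_\eta$ acts trivially on the quotient $\C[t_1,\ldots,t_n]/(\eta)$, so both products agree modulo $\eta$. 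I expect this last reduction to be the main conceptual step, but it is essentially the same mechanism underlying the standard proof that equivariant Schubert classes satisfy GKM compatibility, so it should go through cleanly.

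For part (ii), I would compute directly from the definition of the dot action in~\eqref{def of Tymoczko rep}. Left multiplication by $y \in S_\mu$ preserves both $S_\mu v_\mu$ and its complement, so the support of $y \cdot f_\mu$ equals that of $f_\mu$. On the top coset, evaluating $y(f_\mu(y^{-1}w))$ simply transports each factor $t_{y^{-1}w(a)}-t_{y^{-1}w(b)}$ to $t_{w(a)}-t_{w(b)}$, recovering $f_\mu(w)$ and confirming invariance.
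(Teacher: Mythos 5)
Your proof is correct and follows essentially the same case analysis as the paper's: the trivial case, the case $w\in S_\mu v_\mu$ with $ws_\gamma$ in a lower coset handled via Lemma~\ref{lemma: top-coset-edges}, the vacuous upward case, the within-coset case reduced to the standard fact that a reflection $s_\eta$ acts trivially modulo $\eta$, and the direct support/transport computation for $S_\mu$-invariance. The only cosmetic difference is that you rule out the case $w\notin S_\mu v_\mu$, $ws_\gamma\in S_\mu v_\mu$ via Lemma~\ref{lemma: N minus vmu} and the inversion-set decomposition of Lemma~\ref{lem: y z factor}, whereas the paper invokes the Bruhat-order lifting property \cite[Proposition 2.5.1]{Bjorner-Brenti2005}; both are valid.
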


\begin{proof} Consider an edge $w \xrightarrow{\;w(\gamma)\;} ws_\gamma$ of the GKM graph of $\Hess(\mathsf{S},h)$.
We take cases. 

 If neither $w$ or $ws_\gamma$ is contained in the top coset $S_\mu v_\mu$ then by definition of $f_\mu$ we have $f_\mu(w)=f_\mu(w s_\gamma) = 0$ so the difference $f_\mu(w) - f_\mu(ws_\gamma)$ is equal to $0$ and the GKM condition for this edge trivially holds.  
 
 Next suppose $w$ and $ws_\gamma$ are both contained in the top coset $S_\mu v_\mu$. 
 In this case, by definition of $f_\mu$ we have
 \[
 f_\mu (ws_\gamma) = \prod_{t_i-t_j\in N^-_h(v_\mu)} (t_{ws_\gamma(i)} - t_{ws_\gamma(j)}) = s_{w(\gamma)}\left( \prod_{t_i-t_j\in N^-_h(v_\mu)} (t_{w(i)} - t_{w(j)})  \right) =s_{w(\gamma)}(f_\mu(w)).
 \]
 Thus $w(\gamma)$ divides $f_\mu(w)-s_{w(\gamma)}(f_\mu(w)) = f_\mu(w)-f_{\mu}(ws_\gamma)$, as required.

Note that we cannot have $w\in S_\mu v$ and $ws_\gamma \in S_\mu v_\mu$ for some $v\in {^\mu\Sn}$ with $v<v_\mu$ since in that case we get 
\[
ws_\gamma \leq w \Rightarrow v_\mu \leq v
\]
by \cite[Proposition 2.5.1]{Bjorner-Brenti2005}. 
which contradicts the assumption that $v<v_\mu$. This implies that the only remaining case to check is when $w\in S_\mu v_\mu$ and $ws_{\gamma} \in S_{\mu} v$ for some $v\in {^\mu\Sn}$ with $v<v_\mu$. In this case, we get
\[
f_\mu(w) - f_\mu (ws_\gamma) = \prod_{t_i-t_j\in N^-_h(v_\mu)} (t_{w(i)} - t_{w(j)})
\]
because $f_\mu(w s_\gamma) = 0$. 
Moreover, by Lemma~\ref{lemma: top-coset-edges}, we know that we are in the situation when $\gamma = t_i-t_j \in N^-(v_\mu) \cap \Phi_h^- = N^-_h(v_\mu)$.  Thus $w(\gamma) = t_{w(i)} - t_{w(j)}$ appears as a factor in the RHS of the above equation, and in particular divides $f_\mu(w) - f_\mu (ws_\gamma)$ as desired.

Finally, suppose $y\in S_\mu$. Since left multiplication by $y^{-1}$ stabilizes all right cosets of $S_\mu$ in $\Sn$ we get that
\[
y\cdot f_\mu(w) = \left\{ \begin{array}{cl} y(f_\mu (y^{-1}w)) & \textup{ if } w\in S_\mu v_\mu\\
0 & \textup{ otherwise. }   \end{array}\right. 
\]
Now we have
\[
y(f_\mu (y^{-1}w)) = y\left( \prod_{t_i-t_j\in N^-_h(v_\mu)} (t_{y^{-1}w(i)} - t_{y^{-1}w(j)})  \right) = \prod_{t_i-t_j\in N^-_h(v_\mu)} (t_{w(i)} - t_{w(j)})  = f_\mu(w)
\]
for all $w\in S_\mu v_\mu$.  This proves $y\cdot f_\mu = f_\mu$.
\end{proof}

\begin{remark} 
The classes constructed in Proposition~\ref{prop: top coset class} can be defined in the more general setting of the equivariant cohomology of a regular semisimple Hessenberg variety contained in the flag variety $G/B$ of any reductive algebraic group $G$.  The GKM graph a regular semisimple Hessenberg variety is well-known, and generalizes the construction presented above (cf.\cite{DPS1992}).  Fix a subgroup $W_J$ in the Weyl group $W$ generated by a subset $J$ of simple reflections.  We can define a GKM class by assigning a nonzero label to each element of the right coset of $W_J$ in $W$ corresponding to the maximal shortest-right-coset representative of $W_J \backslash W$. This nonzero label is a product of roots defined analogously to Proposition~\ref{prop: top coset class}, and yields a well-defined equivariant cohomology class by essentially the same argument.  
\end{remark}

Multiplying the class $f_\mu$ in Proposition~\ref{prop: top coset class} by any $S_\mu$-invariant nonzero homogeneous equivariant cohomology class $g\in H_T^{2j}(\Hess(\mathsf{S},h))$ yields a class of degree $2|N_h^-(v_\mu)|+ 2j$ with the property that $g f_\mu$ is $S_\mu$-invariant and $gf_\mu(w)=0$ unless $w$ is in the right coset of $S_\mu$ indexed by $v_\mu$.  We call any class of this form a \textbf{top coset GKM class} since its support set, i.e. the set of permutations at which it evaluates to be non-zero, is precisely the right coset of the maximal element $v_\mu$ in ${^\mu\Sn}$.

The next lemma tells us that the support set of any class in the $\Sn$-orbit of the top coset class $f_\mu$ has a simple description in terms of certain \textit{left} cosets in $\Sn$. Recall from~\eqref{eq: left and right coset reps inverse} that $(^\mu\Sn)^{-1} = \Sn^\mu$.

\begin{lemma}\label{lem.left-coset-description} 
Let $\mu = (\mu_1, \mu_2, \cdots, \mu_\ell)$ be a composition of $n$ and 
$\mu' = (\mu_\ell, \mu_{\ell-1}, \cdots, \mu_1)$. Let $S_{\mu'}$ be the Young subgroup corresponding to $\mu'$.
 For all $v\in {\Sn^\mu}$ we have
\[
v\cdot f_{\mu}(w) = \left\{ \begin{array}{cl} \prod_{t_i-t_j\in N_h^-(v_\mu)} (t_{w(i)} - t_{w(j)}) & \textup{ if } w=\phi_{v_\mu}(v^{-1})y'  \textup{ for some } y'\in S_{\mu'}\\
0 & \textup{ otherwise }   \end{array}\right.
\]
where $\phi_{v_\mu}: {^\mu \Sn} \to \Sn^{\mu'}$ is the bijection defined in~\eqref{eqn.left-to-right}. In particular, the class $v\cdot f$ has support equal to the left coset of $S_{\mu'}$ in $\Sn$ with shortest coset representative $\phi_{v_\mu}(v^{-1}) := vv_\mu$ and the support of any two classes $v_1\cdot f_\mu$ and $v_2\cdot f_\mu$ where $v_1,v_2\in \Sn^\mu$ with $v_1\neq v_2$ are disjoint.
\end{lemma}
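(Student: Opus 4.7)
The plan is to unpack the definition of the dot action and then combine the formula for $f_\mu$ from Proposition~\ref{prop: top coset class} with the coset-interchange identity $v_\mu^{-1}S_\mu v_\mu = S_{\mu'}$ from Lemma~\ref{lemma.right-to-left-top-coset}. First, by definition,
\[
(v \cdot f_\mu)(w) \;=\; v\bigl(f_\mu(v^{-1}w)\bigr),
\]
so $(v\cdot f_\mu)(w)\ne 0$ if and only if $f_\mu(v^{-1}w)\ne 0$, i.e., $v^{-1}w \in S_\mu v_\mu$, i.e., $w \in v\, S_\mu v_\mu$.

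Next, I would rewrite this support set as a left coset of $S_{\mu'}$. Using the identity $v_\mu^{-1}S_\mu v_\mu = S_{\mu'}$ from Lemma~\ref{lemma.right-to-left-top-coset}, one has $S_\mu v_\mu = v_\mu S_{\mu'}$, and hence
\[
v\, S_\mu v_\mu \;=\; v v_\mu\, S_{\mu'}.
\]
Since $v \in \Sn^\mu$ gives $v^{-1}\in {}^\mu\Sn$, Lemma~\ref{lemma.right-to-left-top-coset} tells us that $\phi_{v_\mu}(v^{-1}) = (v^{-1})^{-1}v_\mu = vv_\mu$ is an element of $\Sn^{\mu'}$ and is therefore the (unique) shortest left-coset representative for the coset $vv_\mu S_{\mu'}$. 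Writing $w = v v_\mu y'$ for some (unique) $y' \in S_{\mu'}$ then gives exactly the parametrization in the statement.

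To obtain the claimed value, I would compute on the nonzero branch. Writing $v^{-1}w \in S_\mu v_\mu$ as $v^{-1}w = y v_\mu$ for some $y \in S_\mu$, Proposition~\ref{prop: top coset class} gives
\[
f_\mu(v^{-1}w) \;=\; \prod_{t_i-t_j \in N_h^-(v_\mu)} \bigl(t_{v^{-1}w(i)} - t_{v^{-1}w(j)}\bigr).
\]
Applying $v$, which acts on $\C[t_1,\ldots,t_n]$ by $v(t_k) = t_{v(k)}$, sends $t_{v^{-1}w(i)} \mapsto t_{w(i)}$, so
\[
(v\cdot f_\mu)(w) \;=\; \prod_{t_i-t_j \in N_h^-(v_\mu)} \bigl(t_{w(i)} - t_{w(j)}\bigr),
\]
which is precisely the formula claimed.

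Finally, for the disjointness statement, suppose $v_1, v_2 \in \Sn^\mu$ with $v_1 \ne v_2$. Then $v_1^{-1}\ne v_2^{-1}$ in ${}^\mu\Sn$, so since $\phi_{v_\mu}$ is a bijection (Lemma~\ref{lemma.right-to-left-top-coset}), the elements $\phi_{v_\mu}(v_1^{-1}) = v_1 v_\mu$ and $\phi_{v_\mu}(v_2^{-1}) = v_2 v_\mu$ are distinct shortest representatives in $\Sn^{\mu'}$, and hence represent distinct left cosets of $S_{\mu'}$. Thus the support sets $v_1 v_\mu\, S_{\mu'}$ and $v_2 v_\mu\, S_{\mu'}$ are disjoint, as claimed. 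The only mildly delicate step is the identification $v S_\mu v_\mu = v v_\mu S_{\mu'}$, which is where the conjugation identity of Lemma~\ref{lemma.right-to-left-top-coset} does the essential work; everything else is direct unwinding of definitions.
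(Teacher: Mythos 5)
Your proof is correct and follows essentially the same route as the paper's: unwind the dot action to reduce the support condition to $v^{-1}w \in S_\mu v_\mu$, use the conjugation identity $v_\mu^{-1}S_\mu v_\mu = S_{\mu'}$ from Lemma~\ref{lemma.right-to-left-top-coset} to rewrite this as membership in the left coset $vv_\mu S_{\mu'}$, compute the value by applying $v$ to the product, and deduce disjointness from the fact that the $v_i v_\mu$ are distinct shortest left-coset representatives. The only cosmetic difference is that the paper carries out the rewriting $w = vv_\mu(v_\mu^{-1}yv_\mu)$ elementwise rather than via the coset identity $S_\mu v_\mu = v_\mu S_{\mu'}$, but these are the same manipulation.
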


\begin{proof} By definition, $\left( v \cdot f_\mu\right)(w) := v(f_\mu(v^{-1}w))$ is non-zero if and only if $f_\mu(v^{-1}w)\neq 0$.  The latter condition is equivalent to requiring that $v^{-1}w\in S_\mu v_\mu$.  We have
\begin{eqnarray*}
v^{-1}w = yv_\mu \ \textup{ for some } y\in S_\mu &\Leftrightarrow& w= vv_\mu v_\mu^{-1}yv_\mu \ \textup{ for } y\in S_\mu \\
&\Leftrightarrow& w= \phi_{v_\mu}(v^{-1}) y' \ \textup{ for } y':= v_\mu^{-1}yv_\mu \in S_{\mu'}
 \end{eqnarray*}
 where we have used Lemma~\ref{lemma.right-to-left-top-coset} for the last equivalence. 
Thus $\left(v \cdot f_{\mu}\right)(w)\neq 0$ if and only if $w\in \phi_{v_\mu}(v^{-1})S_{\mu'}$. Moreover, if $v^{-1}w \in S_\mu v_\mu$ then 
\[
\left( v \cdot f_{\mu}\right)(w) = v(f_\mu (v^{-1}w)) = v\left( \prod_{t_i-t_j\in N_h^-(v_\mu)} t_{v^{-1}w(i)} - t_{v^{-1}w(j)} \right) = \prod_{t_i-t_j\in N_h^-(v_\mu)} t_{w(i)} - t_{w(j)}
\]
as desired. This proves the first claim. 

Now let $v_1, v_2 \in \Sn^\mu$.  By the above, we know that the support of $v_1 \cdot f_\mu$ (respectively, $v_2 \cdot f_\mu$) is the left coset $\phi_{v_\mu}(v_1^{-1}) S_\mu = v_1 v_\mu S_{\mu'}$ (respectively, $\phi_{v_\mu}(v_2^{-1}) S_\mu = v_2 v_\mu S_{\mu'}$). Applying Lemma~\ref{lemma.right-to-left-top-coset} we know $v_1 v_\mu, v_2 v_\mu \in \Sn^{\mu'}$ are shortest left coset representatives, so $v_1 v_\mu S_{\mu'} \cap v_2 v_\mu S_{\mu'} \neq \emptyset$ if and only if $v_1 v_\mu = v_2 v_\mu$ if and only if $v_1 = v_2$. Hence we conclude if $v_1 \neq v_2$ then the two left cosets $\phi_{v_\mu}(v_1^{-1}) S_\mu$ and $\phi_{v_\mu}(v_2^{-1}) S_\mu$ are disjoint. Thus if $v_1\neq v_2$, then the supports of $v_1 \cdot f_\mu$ and $v_2 \cdot f_\mu$ are disjoint. 
\end{proof}

The main reason for studying top coset classes comes from the following proposition, which is also well-known. 

\begin{proposition}\label{prop: top coset rep}
 Let $\mu = (\mu_1, \mu_2, \cdots, \mu_\ell)$ be a composition of $n$ and let $S_\mu$ be the corresponding Young subgroup of $\Sn$. Let $f_\mu$ be the top-coset GKM class defined in Proposition~\ref{prop: top coset class}.  Then the $\Sn$-orbit of $f_\mu$ under the dot action, given by the set
\[
\{ v \cdot f_\mu \mid v\in \Sn^\mu \}, 
\]
is $H_T^*(\pt)$-linearly independent.  Furthermore, the $H_T^*(\pt)$-subrepresentation of $H_T^*(\Hess(\mathsf{S},h))$ spanned by this set  is an $\Sn$-subrepresentation with the same character as $\mathrm{ind}_{S_\mu}^{S_n}(\mathbf{1})\simeq M^{\Par(\mu)}$, where $\Par(\mu)$ is the partition of $n$ obtained from $\mu$ by rearranging the parts in decreasing order. 
\end{proposition}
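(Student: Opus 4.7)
The plan is to derive both claims directly from the disjoint-support description in Lemma~\ref{lem.left-coset-description} together with the $S_\mu$-invariance of $f_\mu$ from Proposition~\ref{prop: top coset class}. For the $H_T^*(\pt)$-linear independence, I would suppose a relation
\[
\sum_{v \in \Sn^\mu} p_v \cdot (v \cdot f_\mu) = 0, \qquad p_v \in H_T^*(\pt) \cong \C[t_1,\ldots,t_n],
\]
and fix $v_0 \in \Sn^\mu$. By Lemma~\ref{lem.left-coset-description}, the support of $v_0 \cdot f_\mu$ is the left coset $v_0 v_\mu S_{\mu'}$, which is disjoint from the support of $v \cdot f_\mu$ for every other $v \in \Sn^\mu$. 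Evaluating the relation in the GKM model at $w_0 := v_0 v_\mu$ therefore kills every summand except the $v_0$-th, leaving
\[
p_{v_0}(t_1,\ldots,t_n) \cdot \prod_{t_i - t_j \in N_h^-(v_\mu)} \bigl(t_{w_0(i)} - t_{w_0(j)}\bigr) = 0.
\]
The product is a nonzero element of the integral domain $\C[t_1,\ldots,t_n]$, forcing $p_{v_0} = 0$; since $v_0$ was arbitrary, the desired linear independence follows.

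For the character claim, I would first identify $\Stab(f_\mu)$ exactly. Proposition~\ref{prop: top coset class} already gives $S_\mu \subseteq \Stab(f_\mu)$. Conversely, if $w \cdot f_\mu = f_\mu$, I factor $w = vy$ with $v \in \Sn^\mu$ and $y \in S_\mu$ via Lemma~\ref{lem.coset-decomp}, so that $w \cdot f_\mu = v \cdot f_\mu$. Comparing supports via Lemma~\ref{lem.left-coset-description} gives $vv_\mu S_{\mu'} = v_\mu S_{\mu'}$, i.e., $v \in v_\mu S_{\mu'} v_\mu^{-1} = S_\mu$ by Lemma~\ref{lemma.right-to-left-top-coset}; since $\Sn^\mu \cap S_\mu = \{e\}$, we conclude $v = e$ and $w \in S_\mu$. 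Thus $\Stab(f_\mu) = S_\mu$. Now for any $w = vy$ as above, the invariance of $f_\mu$ under $S_\mu$ gives $w \cdot f_\mu = v \cdot f_\mu$, so together with linear independence (which implies that the classes $v \cdot f_\mu$ for $v \in \Sn^\mu$ are pairwise distinct), the orbit of $f_\mu$ is exactly $\{v \cdot f_\mu : v \in \Sn^\mu\}$ and the $\Sn$-action on this orbit is the natural action of $\Sn$ on the coset space $\Sn/S_\mu$. Hence the permutation character of the resulting $H_T^*(\pt)$-submodule coincides with the character of $\mathrm{ind}_{S_\mu}^{\Sn}(\mathbf{1}) \simeq M^{\Par(\mu)}$.

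The only nontrivial step is the identification of $\Stab(f_\mu)$, and this reduces entirely to comparing the explicit support descriptions furnished by Lemma~\ref{lem.left-coset-description} via the conjugation identity $v_\mu^{-1} S_\mu v_\mu = S_{\mu'}$. Everything else amounts to routine bookkeeping with the dot action and the coset decompositions collected in Section~\ref{subsec: weyl group}.
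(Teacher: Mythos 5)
Your proposal is correct and follows essentially the same route as the paper's proof: linear independence via the pairwise-disjoint supports of the classes $v\cdot f_\mu$ from Lemma~\ref{lem.left-coset-description} plus the integral-domain property of $H_T^*(\pt)$, and the character claim via identifying $\Stab(f_\mu)=S_\mu$ (your explicit conjugation argument $v\in v_\mu S_{\mu'}v_\mu^{-1}=S_\mu$ is just a slightly more detailed version of the paper's appeal to the same support description). The only quibble is notational: your $w_0$ clashes with the paper's longest element and your $v_0$ with the element defined in Section~4, so rename them if this is to be merged.
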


\begin{proof} 
We first prove that the set $\{v \cdot f_\mu  \mid v \in \Sn^\mu \}$ is $H^*_T(\pt)$-linearly independent. To see this, suppose that there exist polynomials $c_v \in H^*_T(\pt)$ such that 
\begin{equation}\label{eq: HTpt dependence}
\sum_{v \in \Sn^\mu} c_v \, v \cdot f_\mu = 0 \in H^*_T(\Hess(\mathsf{S},h)).
\end{equation} 
The above equality takes place in $H^*_T(\Hess(\mathsf{S},h))$ which we may identify with its GKM description, as a subring of $\bigoplus_{w \in \Sn} H^*_T(\pt)$. In particular, ~\eqref{eq: HTpt dependence} holds if and only if 
\begin{equation}\label{eq: restrict dependence to fixed points}
\sum_{v \in \Sn^\mu} c_v  \left(v\cdot f_\mu\right)(w) = 0 \, \, \textup{ for all } \, \, w \in \Sn.
\end{equation}
By Lemma~\ref{lem.left-coset-description} the classes $v \cdot f_\mu$ have disjoint supports, so that for any $w \in \Sn$ there exists at most one $v \in \Sn^\mu$ such that $\left(v\cdot f_\mu\right)(w) \neq 0$. Let $w \in \Sn$ and suppose $\left(v\cdot f_\mu\right)(w) \neq 0$ for some $v \in \Sn^\mu$. Then $\left( v' \cdot f_\mu \right)(w) = 0$ for all $v' \in {^\mu}\Sn$ with $v' \neq v$ so~\eqref{eq: restrict dependence to fixed points} implies 
\[
c_v \left( v \cdot f_\mu \right)(w) = 0 \in H^*_T(\pt).
\]
Since $H^*_T(\pt)$ is a polynomial ring over $\C$ and in particular an integral domain, the fact that $(v \cdot f_\mu)(w) \neq 0$ implies $c_v = 0$. Now the fact that $c_v = 0$ for all $v \in \Sn^\mu$ follows from the fact that for any $v \in \Sn^\mu$ there exists at least one $w \in \Sn$ with $(v\cdot f_\mu)(w) \neq 0$, as can be seen from the explicit description of the support of $v \cdot f_\mu$ in Lemma~\ref{lem.left-coset-description}.

To see that the span of $\{v \cdot f_\mu  \mid  v \in \Sn^\mu \}$ is an $\Sn$-submodule (with $H_T^*(\pt)$-coefficients) isomorphic to $\mathrm{ind}_{S_\mu}^{S_n}(\mathbf{1})$ it suffices to show that the stabilizer subgroup of  $f_\mu$ is $S_\mu$. This is clear as $y\cdot f_\mu = f_\mu$ for all $y\in S_\mu$ by Proposition~\ref{prop: top coset class} and $v\cdot f_\mu \neq f_\mu$ for all $v\in \Sn^\mu$ with $v\neq e$ by Lemma~\ref{lem.left-coset-description}. This completes the proof. 
\end{proof}

\begin{example} Let $n=3$ and $h=(2,3,3)$ as in Example~\ref{ex.GKMgraph}.  The following three classes in $H_T^*(\Hess(\mathsf{S},h))$ give the $S_n$-orbit of $f=f_\mu$ for $\mu = (1,2)$. \textup{(}Note that in this case, $\Sn^\mu = \{e, s_1, s_2s_1 \}$.\textup{)}
\[
\begin{array}{c|cccccc}
\empty & e & s_1 & s_2 & s_1s_2 & s_2s_1 & s_1s_2s_1\\ \hline
f &   0 & 0 & 0 & t_1-t_3 & 0 & t_1-t_2 \\ 
s_1\cdot f &   0 & 0 & t_2-t_3 & 0 & t_2-t_1 & 0 \\
s_2s_1\cdot f &  t_3-t_2 & t_3-t_1 & 0 & 0 & 0 & 0
\end{array}
\]
Now $\mathrm{span}_{H_T^*(\pt)}\{ f, s_1\cdot f, s_2s_1\cdot f \}$ in $H_T^*(\Hess(\mathsf{S},h))$ is an $S_n$-subrepresentation isomorphic to $M^{(2,1)}$.
\end{example}

The discussion above makes it evident that these classes are very special in the sense that the support is just one right coset. The question naturally arises: can we give a variant of this ``top-coset'' construction to systematically and explicitly construct GKM classes whose supports may include more than one coset, and which still have stabilizer subgroups which are reflection subgroups? In the next section we answer this question in the affirmative, under some restrictions on the Hessenberg function $h$.


\section{GKM classes in $H^*_T(\Hess(\mathsf{S},h))$ for two-part compositions}\label{sec.class.defn}

In the previous section, we explained how to construct GKM classes in $H^*_T(\Hess(\mathsf{S},h))$ which are supported on a single (``top'') coset of a Young subgroup. Although this property does make these classes more computationally tractable, this is a highly restrictive condition.  In this section, under some technical hypotheses on $h$, we construct GKM classes which can be non-zero on more than one coset. Motivated by the ``abelian case'' as discussed in the introduction, our analysis focuses on compositions of $n$ with two parts.

The setting for this section is as follows. Let $\lambda = (\lambda_1, \lambda_2)$ be a composition of $n$ with two parts.
 Then $S_\lambda= \left< s_i\mid i\neq \lambda_1 \right>$ is the associated Young subgroup.
In order to define our GKM classes, we further decompose the set ${^{\lambda}\Sn}$ of shortest right-coset representatives for $S_\lambda$ as follows. We need some preparation. Consider the composition $\mu = (1,n-1)$. From the discussion in Section~\ref{subsec: weyl group} it is not hard to see that the set of shortest right coset representatives ${^{\mu}\Sn}$ is
given by 
\[
{^{\mu}\Sn} = \{ e, s_1, s_1s_2, \ldots, s_1s_2\cdots s_{n-1} \}.
\]
We define 
\begin{equation}\label{eq: def uk} 
u_k := s_1s_2 \ldots s_k
\end{equation} 
for $k$ with $1 \leq k \leq n-1$ and $u_0:=e$. The maximal element of ${^\mu\Sn}$ is then $u_{n-1}$. Note that the one-line notation for $u_k$ has a $1$ in position $k+1$, and all other entries in increasing order.  Moreover, it is straightforward to check that two permutations $v,w\in \Sn$ are in the same right coset of $S_\mu$ if $1$ is in the same position in their one-line notation, that is, if $v^{-1}(1) = w^{-1}(1)$. 
Returning now to the coset representatives ${^{\lambda}\Sn}$ for $\lambda = (\lambda_1, \lambda_2)$, in this section we denote the maximal element in ${}^{\lambda}\Sn$ by $v_{\lambda_2}$. 
It can be computed explicitly in this case to be 
\[
v_{\lambda_2} = [ \underbrace{\lambda_1 +1, \lambda_1+2, \cdots, n}_{\lambda_2 \textup{ entries}}, \underbrace{1, 2, \cdots, \lambda_1}_{\lambda_1 \textup{ entries}} ] 
\]
which means 
\[
v_{\lambda_2}^{-1} = [ \underbrace{\lambda_2+1, \lambda_2+2, \ldots, n}_{\lambda_1 \textup{ entries}}, \underbrace{1, 2, \ldots, \lambda_2}_{\lambda_2 \textup{ entries}} ].
\]
Since $v_{\lambda_2}$ has a $1$ in the $(\lambda_2+1)$-st entry, it follows that $v_{\lambda_2}$ is contained in the right coset $S_\mu u_{\lambda_2}$ of $S_\mu$. Indeed we have
\begin{eqnarray}\label{eqn.v0formula}
v_{\lambda_2} = v_0 u_{\lambda_2} \ \textup{ for } \ 
v_0 := [ 1, \underbrace{\lambda_1+1, \lambda_1+2, \cdots, n}_{\lambda_2 \textup{ entries}}, \underbrace{2, 3, \cdots, \lambda_1 }_{\lambda_1-1 \textup{ entries}} ] \in S_\mu
\end{eqnarray}
and we can also compute 
\begin{equation}\label{eq: v0 inverse one-line}
v_0^{-1}=[1, \lambda_2+2,\ldots n, 2, 3, \ldots, \lambda_2+1]\in S_\mu.
\end{equation}
We now focus on the elements of ${^\lambda}\Sn$ of the form $v_0 u_{k}$ for $0\leq k \leq \lambda_2$. Define
\[
v_k: = v_0u_k.
\]
The one line notation for $v_k$ is 
\begin{equation}\label{eq: vk} 
v_k = [ \underbrace{\lambda_1+1, \lambda_1+2, \cdots, \lambda_1+k-1, \lambda_1+k}_{k \textup{ entries}}, \underbrace{1}_{\textup{$(k+1)$-st entry}}, \underbrace{\lambda_1+k+1, \cdots, n-1, n}_{\textup{ $(k+2)$-nd to $(\lambda_2+1)$-st entry}}, \underbrace{2,3, \cdots, \lambda_1}_{\textup{ last $\lambda_1-1$ entries}} ] 
\end{equation}
from which it follows that $v_k$ indeed lies in ${^\lambda}\Sn$. We define $({^\lambda}\Sn)_0$ to be the set of such $v_k$, i.e. $({^\lambda}\Sn)_0 := \{ v_0, v_1, \cdots, v_{\lambda_2}\}$. 

We note two facts for future use. 
 First, the one-line notation for $v_k^{-1}$ is
\begin{equation}\label{eq: vk inverse}
v_k^{-1} = [ k+1, \lambda_2+2, \ldots, n, 1, 2, \ldots, k,\widehat{k+1},k+2 \ldots, \lambda_2, \lambda_2+1 ].
\end{equation}
Second, since $v_0$ is contained in $S_\mu$ and the $u_k$ are shortest-coset representatives in ${^\mu}\Sn$, from Lemma~\ref{lem.coset-decomp} we know $\ell(v_k) = \ell(v_0) + \ell(u_k)$.

\begin{remark}\label{remark: lambda is mu} 
In the case that $\lambda = \mu = (1,n-1)$, i.e.~when $\lambda_1 = 1$ and $\lambda_2=n-1$, then from~\eqref{eqn.v0formula} it follows that $v_0$ is equal to the identity permutation, and $u_k=v_k$ for all $0 \leq k \leq \lambda_2=n-1$. So in this case, $({^{\lambda}\Sn})_0 = {^{\mu}\Sn} = \{e, u_1, u_2,\cdots, u_{n-1}\}$. 
\end{remark} 

We focus on this subset $({^\lambda}\Sn)_0$ of ${^\lambda}\Sn$ because it is particularly well-behaved under the Bruhat order.  
To see this, we begin with the following simple lemma. 

\begin{lemma} \label{lemma: w less than v}
Let $\lambda=(\lambda_1, \lambda_2)$ be a composition of $n$ with two parts and suppose $v, w \in {^{\lambda}\Sn}$. Then $w \leq v$ in Bruhat order if and only if $w^{-1}(k) \leq v^{-1}(k)$ for all $1 \leq k \leq \lambda_1$. 
\end{lemma}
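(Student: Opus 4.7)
The plan is to reduce this lemma to a direct application of the tableau criterion (Theorem~\ref{theorem: tableau criterion}) applied to the \emph{inverse} permutations. First, I would use the standard fact that Bruhat order is preserved by inversion, so $w \leq v$ if and only if $w^{-1} \leq v^{-1}$. By~\eqref{eq: left and right coset reps inverse}, both $w^{-1}$ and $v^{-1}$ lie in $\Sn^\lambda$, which is exactly the setting in which the combinatorics becomes tractable: elements of $\Sn^\lambda$ have extremely restricted descent patterns.

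Next, I would observe that the defining condition for $\Sn^\lambda$ forces $w^{-1}(i) < w^{-1}(i+1)$ for every $i \in [n-1] \setminus \{\lambda_1\}$, and hence $D_R(w^{-1}) \subseteq \{\lambda_1\}$. Consequently, in the tableau criterion applied to $w^{-1} \leq v^{-1}$, the only value $k \in D_R(w^{-1})$ that can possibly impose a constraint is $k = \lambda_1$. Moreover, since the sequence $w^{-1}(1), w^{-1}(2), \ldots, w^{-1}(\lambda_1)$ is already increasing, the $i$-th element of its increasing rearrangement is $(w^{-1})_{i,\lambda_1} = w^{-1}(i)$, and similarly for $v^{-1}$. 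Hence the tableau criterion specializes exactly to the coordinatewise inequalities $w^{-1}(i) \leq v^{-1}(i)$ for $1 \leq i \leq \lambda_1$, giving both directions of the stated equivalence in the generic case.

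Finally, I would handle the trivial edge case $D_R(w^{-1}) = \emptyset$, which forces $w = e$. Then $w \leq v$ holds automatically, while $w^{-1}(k) = k \leq v^{-1}(k)$ follows because the values $v^{-1}(1) < v^{-1}(2) < \cdots < v^{-1}(\lambda_1)$ form an increasing sequence of $\lambda_1$ distinct integers from $[n]$. I do not anticipate any serious obstacle: the content is essentially the classical description of Bruhat order on Grassmannian permutations, and the form of the tableau criterion recalled in the excerpt is perfectly suited for this reduction.
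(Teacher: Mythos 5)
Your proof is correct and follows essentially the same route as the paper: a direct application of the tableau criterion, using that elements of ${^{\lambda}\Sn}$ are determined by the positions $\{v^{-1}(1),\ldots,v^{-1}(\lambda_1)\}$. Your version simply makes explicit the pass to inverses (so that the descent set is contained in $\{\lambda_1\}$) and the $w=e$ edge case, both of which the paper leaves as "straightforward."
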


\begin{proof} 
This follows from a straightforward application of the tableau criterion in Theorem~\ref{theorem: tableau criterion} together with the fact that a shortest coset representative $v \in {^{\lambda}\Sn}$ is uniquely determined by the locations of the entries $\{1,2,\ldots, \lambda_1\}$, i.e., the set $\{v^{-1}(1), v^{-1}(2), \cdots, v^{-1}(\lambda_1)\}$. 
\end{proof} 

Using Lemma~\ref{lemma: w less than v} above we can show the following.

\begin{lemma}\label{lem.bruhat-order} Let $\lambda = (\lambda_1, \lambda_2)$ be a composition of $n$ as above. Then: 
\begin{enumerate}
\item $({^\lambda}\Sn)_0 = \{v\in {^\lambda\Sn} \mid v_0\leq v\}$, and
\item for any $k, j$ with $0 \leq k, j \leq \lambda_2$, we have $v_k \leq v_j$ in Bruhat order if and only if $k \leq j$. 
\end{enumerate}
\end{lemma}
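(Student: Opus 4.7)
The plan is to prove part (2) first by a direct comparison of one-line notations, and then deduce part (1) using (2) for the easy direction and a pigeonhole-style argument for the harder direction. Throughout, the main tool is Lemma~\ref{lemma: w less than v}, which lets us test Bruhat order among elements of ${^{\lambda}\Sn}$ by comparing the positions of $1, 2, \ldots, \lambda_1$.

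For part (2), I would apply Lemma~\ref{lemma: w less than v} with $v = v_j$ and $w = v_k$. Reading off the explicit formula~\eqref{eq: vk inverse} for $v_k^{-1}$ gives $v_k^{-1}(1) = k+1$ and $v_k^{-1}(i) = \lambda_2 + i$ for $2 \leq i \leq \lambda_1$. Since the entries for $i \geq 2$ do not depend on $k$, the condition $v_k^{-1}(i) \leq v_j^{-1}(i)$ for all $1 \leq i \leq \lambda_1$ collapses to the single inequality $k+1 \leq j+1$, which is equivalent to $k \leq j$.

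For part (1), the ``$\supseteq$'' direction is immediate: if $v = v_k \in ({^\lambda}\Sn)_0$ then part (2) applied with $j=k$ and the fact that $0 \leq k$ yields $v_0 \leq v_k$. For ``$\subseteq$'', suppose $v \in {^\lambda}\Sn$ satisfies $v_0 \leq v$. Combining Lemma~\ref{lemma: w less than v} with formula~\eqref{eq: v0 inverse one-line} gives $v^{-1}(1) \geq 1$ (trivial) and $v^{-1}(i) \geq \lambda_2 + i$ for $2 \leq i \leq \lambda_1$. Because $v \in {^\lambda}\Sn$, the values $1, 2, \ldots, \lambda_1$ appear in $v$ in increasing order of position, hence $v^{-1}(2) < v^{-1}(3) < \cdots < v^{-1}(\lambda_1) \leq n$. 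A descending induction starting from $v^{-1}(\lambda_1) \geq \lambda_2+\lambda_1 = n$ forces $v^{-1}(i) = \lambda_2 + i$ for each $2 \leq i \leq \lambda_1$. Hence the positions $\lambda_2 + 2, \ldots, n$ of $v$ are occupied by $2, 3, \ldots, \lambda_1$, and the value $v^{-1}(1)$ must equal $k+1$ for some $0 \leq k \leq \lambda_2$. Comparing with~\eqref{eq: vk} identifies $v$ as $v_k$, so $v \in ({^\lambda}\Sn)_0$.

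There is no real obstacle here; the only subtle point is noting that combining the inequalities $v^{-1}(i) \geq \lambda_2 + i$ with the strict monotonicity of $i \mapsto v^{-1}(i)$ on $\{1,\ldots,\lambda_1\}$ for $v \in {^\lambda}\Sn$ forces equality, after which everything is determined up to the location of $1$. The proof is essentially a bookkeeping exercise with one-line notations, relying entirely on the tableau-criterion-style Lemma~\ref{lemma: w less than v} established earlier.
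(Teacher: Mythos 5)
Your proof is correct and takes essentially the same route as the paper: both arguments reduce Bruhat comparisons among elements of ${^{\lambda}\Sn}$ to comparing the positions of the values $1,\ldots,\lambda_1$ via Lemma~\ref{lemma: w less than v} (itself a consequence of the tableau criterion), and your descending induction forcing $v^{-1}(i)=\lambda_2+i$ for $2\leq i\leq \lambda_1$ is precisely the step the paper summarizes as the entries $\{2,\ldots,\lambda_1\}$ being forced into the last $\lambda_1-1$ positions of the one-line notation. The only cosmetic differences are that you prove part (2) first and handle $\lambda_1=1$ and $\lambda_1\geq 2$ uniformly, whereas the paper treats these cases separately.
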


\begin{proof} 
We first prove the case $\lambda = (1,n-1)$, so $\lambda_1=1, \lambda_2=n-1$. Then $v_0=e$, and it is not hard to see that $({^{\lambda}\Sn})_0 = {^{\lambda}\Sn}$. Since $v_0=e$, the first claim is immediate. The second claim follows straightforwardly from the tableau criterion in Theorem~\ref{theorem: tableau criterion} and the fact that $v_k=u_k$ is the permutation whose one-line notation has a $1$ in the $(k+1)$-st position and all other entries are increasing. 

Now suppose $\lambda_1 \geq 2$. From the one-line notation of $v_0$ in~\eqref{eqn.v0formula} and the tableau criterion, it follows that any other shortest coset representative $v \in {^{\lambda}\Sn}$ with $v_0 \leq v$ must have the entries $\{2,3,\ldots, \lambda_1\}$ appearing in the last $\lambda_1-1$ many entries of the one-line notation of $v$. This then implies that $v$ must equal $v_k$ for some $k$ with $0 \leq k \leq \lambda_2$, as can be seen from the one-line notation of $v_k$ in~\eqref{eq: vk}. Conversely, it is immediate from Lemma~\ref{lemma: w less than v} that each $v_k$ satisfies $v_0 \leq v_k$. Hence the first claim is proved. The second also follows 
from the tableau criterion and~\eqref{eq: vk}.
\end{proof}

\begin{lemma}\label{lemma.vk-invs} For all $k$ with $0\leq k \leq \lambda_2$ we have
\[
N(v_k^{-1}) = \{t_1 - t_b \, \mid \, b \in \{\lambda_1+1, \cdots, \lambda_1+k \} \} \sqcup 
\{ t_a - t_b \, \mid \, a \in \{2,3,\cdots, \lambda_1\}, b \in \{\lambda_1+1, \cdots, n\} \}.
\]
\end{lemma}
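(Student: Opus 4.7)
The plan is to prove this by a direct inspection of the one-line notation of $v_k^{-1}$ given in~\eqref{eq: vk inverse}, which most transparently exhibits the pairs $(a,b)$ with $a<b$ and $v_k^{-1}(a) > v_k^{-1}(b)$, i.e., the inversions of $v_k^{-1}$. Recall that $\gamma = t_a - t_b \in N(v_k^{-1})$ precisely when $a<b$ and $(a,b)$ is an inversion of $v_k^{-1}$ in this classical sense.

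First I would partition the possible indices $a$ into three ranges dictated by the block structure of the one-line notation~\eqref{eq: vk inverse}: (i) $a=1$, where $v_k^{-1}(a) = k+1$; (ii) $a \in \{2,\ldots,\lambda_1\}$, where $v_k^{-1}(a) \in \{\lambda_2+2,\ldots,n\}$ and in fact the values are listed in increasing order; and (iii) $a \in \{\lambda_1+1,\ldots,n\}$, where the values are $1,2,\ldots,k,k+2,\ldots,\lambda_2+1$ in increasing order. In case (i), the only $b>1$ with $v_k^{-1}(b) < k+1$ are those where $v_k^{-1}(b) \in \{1,\ldots,k\}$; by~\eqref{eq: vk inverse} these occur at positions $b \in \{\lambda_1+1,\ldots,\lambda_1+k\}$, producing the inversions $t_1 - t_b$ in the first set. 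In case (ii), the values $v_k^{-1}(a) \geq \lambda_2+2$ strictly exceed every value $v_k^{-1}(b) \leq \lambda_2+1$ appearing at positions $b \in \{\lambda_1+1,\ldots,n\}$, while among positions $b \in \{a+1,\ldots,\lambda_1\}$ the values are increasing and so yield no inversions; this produces the roots $t_a - t_b$ in the second set. In case (iii), the values are increasing across positions $\{\lambda_1+1,\ldots,n\}$, so no inversions arise. Taking the disjoint union gives exactly the claimed expression.

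As a sanity check (and alternative approach I could present if cleaner), I could instead use the factorization $v_k = v_0 u_k$ noted after~\eqref{eq: vk inverse}, which satisfies $\ell(v_k)=\ell(v_0)+\ell(u_k)$. Applying Lemma~\ref{lem: y z factor} with $w=v_k$, $y = v_0$, $v = u_k$ gives
\[
N(v_k^{-1}) = N(v_0^{-1}) \sqcup v_0 \, N(u_k^{-1}).
\]
From~\eqref{eq: v0 inverse one-line}, a direct reading shows $N(v_0^{-1}) = \{t_a - t_b \mid a \in \{2,\ldots,\lambda_1\},\, b \in \{\lambda_1+1,\ldots,n\}\}$, which is the second set in the lemma. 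Since $u_k = s_1s_2\cdots s_k$ has one-line notation $[2,3,\ldots,k+1,1,k+2,\ldots,n]$, we find $N(u_k^{-1}) = \{t_1 - t_b \mid b \in \{2,\ldots,k+1\}\}$. Finally, applying $v_0$ and using $v_0(1)=1$ together with $v_0(i) = \lambda_1+i-1$ for $2\leq i \leq k+1$ (which follows from~\eqref{eqn.v0formula}), I obtain $v_0\, N(u_k^{-1}) = \{t_1 - t_c \mid c \in \{\lambda_1+1,\ldots,\lambda_1+k\}\}$, matching the first set.

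No part of this argument is particularly delicate; the only real care is needed in keeping track of the positions versus values in the one-line notation~\eqref{eq: vk inverse}, and in verifying that the two subsets on the right-hand side are genuinely disjoint (which is immediate, since they are distinguished by whether $a=1$ or $a\geq 2$). I would expect the proof to be a short paragraph, either of the two styles above.
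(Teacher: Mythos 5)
Your primary argument is correct and is exactly the paper's proof: the paper simply notes that $N(v_k^{-1})=\{t_a-t_b \mid a<b,\ v_k^{-1}(a)>v_k^{-1}(b)\}$ and reads the answer off the one-line notation~\eqref{eq: vk inverse}, which is what your three-case inspection carries out in detail. Your alternative via $N(v_k^{-1})=N(v_0^{-1})\sqcup v_0N(u_k^{-1})$ is also valid but unnecessary here.
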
 

\begin{proof} 
By definition,
\begin{equation} 
\begin{split} 
N(v_k^{-1}) & = \{\gamma \in \Phi^+ \, \mid \, v_k^{-1}(\gamma) \in \Phi^- \} \\
& = \{t_a - t_b \, \mid \, a < b, \, \, v_k^{-1}(a) > v_k^{-1}(b) \}. \\ 
\end{split} 
\end{equation}
The claim now follows from the explicit description of the one-line notation of $v_k^{-1}$ given in~\eqref{eq: vk inverse}. 
\end{proof}

We can now define our GKM classes.  Fix $k$ with $0 \leq k \leq \lambda_2$. 
We define a function $f^{(k)}_\lambda : \Sn \to \C[t_1, \ldots, t_n]$ in~\eqref{eqn.cohom-class} below. Under certain additional hypotheses on $k$, the composition $\lambda$, and the Hessenberg function $h$, we will show in Theorem~\ref{thm.class} that $f^{(k)}_\lambda$ is a well-defined equivariant cohomology class in $H_T^*(\Hess(\mathsf{S}, h))$, i.e., the assignment $f^{(k)}_\lambda: \Sn \to \C[t_1, \ldots, t_n]$ satisfies all the GKM compatibility conditions in~\eqref{GKM for X(h)}. To define $f^{(k)}_\lambda$, we first set the notation 
\begin{equation}\label{eq: def Sk}
\mathcal{S}_k := v_kN_h^-(v_k) = N(v_k^{-1})\cap v_k(\Phi_h^-)
\end{equation}
for the set of roots that label the edges in the GKM graph of $\Hess(\mathsf{S},h)$ with source $v_k$
as in~\eqref{eq: labels w source}. 
Now for any $w\in \Sn$, we first write $w=yv$ for unique $y\in S_\lambda$ and $v\in {^\lambda}\Sn$ and then define 
\begin{eqnarray}\label{eqn.cohom-class}
f^{(k)}_\lambda (yv): = \left\{ \begin{array}{ll}  \prod_{t_a-t_b\in \S_k} (t_{y(a)}-t_{y(b)}) & \textup{ if } v\geq v_k  \\  
0 & \textup{ otherwise.} \end{array} \right.
\end{eqnarray}

The following lemma summarizes some properties of the function $f^{(k)}_\lambda$ which follow immediately from the definition. 

\begin{lemma}\label{lemma: fk def}
Let $f^{(k)}_\lambda: \Sn \to \C[t_1, t_2, \cdots, t_n]$ be as defined in~\eqref{eqn.cohom-class}. Then each of the following hold.
\begin{enumerate} 
\item[(1)] The support of $f^{(k)}_\lambda$ is a union of right $S_\lambda$-cosets, and is the set of permutations Bruhat-greater than $v_k$, i.e., 
\[
\mathrm{supp}(f_\lambda^{(k)}) := \{w\in \Sn  \mid  f^{(k)}_\lambda(w) \neq 0 \} = \bigsqcup_{k \leq j \leq \lambda_2} S_\lambda v_j = \{ w \in \Sn  \mid  w \geq v_k \}.
\]
\item[(2)] The element $f^{(k)}_\lambda$ is fixed by $S_\lambda$ under the dot action, 
\[
y \cdot f^{(k)}_\lambda = f^{(k)}_\lambda \ \textup{ for all } \ y\in S_\lambda.
\]
\item[(3)] For any $y \in S_\lambda$ and $w \in \Sn$, we have 
\[
f^{(k)}_\lambda(yw) = y(f^{(k)}_\lambda(w))
\]
where the RHS denotes the standard action of $S_\lambda\subseteq S_n$ on a polynomial in $\C[t_1,\ldots,t_n]$. 
\end{enumerate} 
\end{lemma}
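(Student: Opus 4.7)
The plan is to prove the three parts in order (1), (3), (2), with (2) following as a short consequence of (3). All three statements follow rather directly from the piecewise definition~\eqref{eqn.cohom-class} once the parabolic coset combinatorics is set up correctly, so the main content is just bookkeeping with the unique decomposition $w = yv$ from Lemma~\ref{lem.coset-decomp}.

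For part (1), I will start directly from the definition: if $w = yv$ with $y\in S_\lambda$ and $v \in {^\lambda}\Sn$, then $f_\lambda^{(k)}(w)$ is a product of root differences, which is nonzero in $\C[t_1,\ldots,t_n]$ since the factors $t_{y(a)} - t_{y(b)}$ with $a \neq b$ are nonzero. Hence the support is exactly $\bigsqcup\{S_\lambda v : v\in {^\lambda}\Sn,\, v \geq v_k\}$. Lemma~\ref{lem.bruhat-order}(1) tells us that any $v \in {^\lambda}\Sn$ with $v \geq v_k \geq v_0$ lies in $({^\lambda}\Sn)_0 = \{v_0, v_1, \ldots, v_{\lambda_2}\}$, and Lemma~\ref{lem.bruhat-order}(2) identifies those with $v \geq v_k$ as precisely the $v_j$ for $k \leq j \leq \lambda_2$. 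This gives the middle expression. To identify the support with $\{w \in \Sn : w \geq v_k\}$, I will invoke the standard fact about parabolic Bruhat order (Proposition 2.5.1 of Björner--Brenti): for $w = yv$ with $y \in S_\lambda$ and $v \in {^\lambda}\Sn$, one has $w \geq v_k$ if and only if $v \geq v_k$ (since $v_k \in {^\lambda}\Sn$). This is the only nontrivial step, and I anticipate a quick citation will suffice.

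For part (3), I will write $w = zv$ with $z \in S_\lambda$, $v \in {^\lambda}\Sn$ (using Lemma~\ref{lem.coset-decomp}). Then for $y \in S_\lambda$, the product $yz$ remains in $S_\lambda$ and the decomposition of $yw$ is $(yz)v$ — still with the same shortest right coset representative $v$. This is the key observation: multiplication on the left by elements of $S_\lambda$ does not change which right coset of $S_\lambda$ the permutation belongs to, and hence does not affect whether $v \geq v_k$. If $v \geq v_k$, then
\[
f_\lambda^{(k)}(yw) = \prod_{t_a - t_b \in \S_k}(t_{(yz)(a)} - t_{(yz)(b)}) = y\!\left(\prod_{t_a - t_b \in \S_k}(t_{z(a)} - t_{z(b)})\right) = y(f_\lambda^{(k)}(w)),
\]
where I use the definition of the $S_n$-action on $\C[t_1,\ldots,t_n]$ by permuting indices. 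If $v \not\geq v_k$, both sides are zero. This step is entirely routine once the coset decomposition is made explicit.

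Finally, part (2) is immediate from part (3): the dot action gives $(y \cdot f_\lambda^{(k)})(w) = y(f_\lambda^{(k)}(y^{-1}w))$, and applying (3) with $y^{-1} \in S_\lambda$ in place of $y$ yields $f_\lambda^{(k)}(y^{-1}w) = y^{-1}(f_\lambda^{(k)}(w))$; composing with $y$ recovers $f_\lambda^{(k)}(w)$. I do not anticipate any real obstacles in this lemma — the only conceptual point is the invariance of the shortest-right-coset representative $v$ under left multiplication by $S_\lambda$, which is baked into Lemma~\ref{lem.coset-decomp}. The parabolic Bruhat order identification in (1) is the one place where care is needed, but it is standard.
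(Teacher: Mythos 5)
Your proof is correct and follows essentially the same route as the paper's: the same unique decomposition $w=yv$ from Lemma~\ref{lem.coset-decomp}, the same use of Lemma~\ref{lem.bruhat-order} and \cite[Prop.~2.5.1]{Bjorner-Brenti2005} to identify the support in part (1), and the same computation with the product over $\mathcal{S}_k$. The only (cosmetic) difference is the order of the last two parts: you prove (3) directly and deduce (2), while the paper proves (2) by the analogous direct case analysis and then deduces (3).
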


\begin{proof} 
The first equality of (1) follows from the definition~\eqref{eqn.cohom-class} and Lemma~\ref{lem.bruhat-order}, since $f_\lambda^{(k)}(w) = f_\lambda^{(k)}(yv)$ is defined to be nonzero exactly when $w\in S_\lambda v$ for $v\geq v_k$ and $\{v\in {^\lambda\Sn} \mid v \geq v_k\} = \{v_k, \ldots, v_{\lambda_2}\}$. To prove the second equality, first note that the inclusion $\bigsqcup_{k\leq j \leq \lambda_2} S_\lambda v_j \subseteq \{w\in \Sn \mid w \geq v_k\}$ follows from Lemma~\ref{lem.bruhat-order}(2) and Lemma~\ref{lem.coset-decomp}.  On the other hand, let $w\in \Sn$ such that $w\geq v_k$ and write $w=yv$ with $y\in S_\lambda$ and $v\in {^\lambda\Sn}$ as in Lemma~\ref{lem.coset-decomp}.  By \cite[Proposition 2.5.1]{Bjorner-Brenti2005}, $v_k\leq w$ implies $v_k\leq v$.  Thus $v=v_j$ for some $j$ such that $k\leq j \leq \lambda_2$ by Lemma~\ref{lem.bruhat-order} as desired.  This proves the first claim.

To see the second claim, first observe that for $y \in S_\lambda$ the definition of the dot action implies 
\[
(y \cdot f^{(k)}_\lambda)(w) = y(f^{(k)}_\lambda(y^{-1}w))
\]
and since $y \in S_\lambda$, the two elements $y^{-1}w$ and $w$ are in the same right $S_\lambda$-coset. We take cases. If $f^{(k)}_\lambda(w)=0$ then by the above $f^{(k)}_\lambda(y^{-1}w)$ is also equal to $0$, hence $y(f^{(k)}_\lambda(y^{-1}w)) = 0$ also.  If $f^{(k)}_\lambda(w) \neq 0$ then $w = y'v$ for some $y' \in S_\lambda$ and $v \geq v_j$. Then $y^{-1}w = (y^{-1}y')v\in S_\lambda v$ implies $f_\lambda^{(k)}(y^{-1}w)\neq 0$ and by~\eqref{eqn.cohom-class} we obtain 
\begin{eqnarray*}
(y \cdot f^{(k)}_\lambda)(w) &=& y(f^{(k)}_\lambda(y^{-1}y'v)) = y\left(\prod_{t_a-t_b \in \mathcal{S}_k} y^{-1}(t_{y'(a)}-t_{y'(b)}\right)\\
&=& \prod_{t_a-t_b \in \mathcal{S}_k} (t_{y'(a)}-t_{y'(b)}) = f^{(k)}_\lambda(y'v) = f^{(k)}_\lambda(w) 
\end{eqnarray*}
as desired. This proves (2). We now have
\[
f_\lambda^{(k)}(w) = y^{-1}\cdot f_\lambda^{(k)} (w) = y^{-1}(f_\lambda^{(k)}(yw)) \ \textup{ for all } y\in S_\lambda.
\]
Hence (3) follows.
\end{proof}

Our construction recovers the top-coset classes for compositions with two parts that were discussed in the previous section. 

\begin{remark}\label{rem.top-coset-case} In the special case where $k=\lambda_2$, Lemma~\ref{lemma: fk def} tells us that $f_\lambda^{(\lambda_2)}$ is supported on the coset $S_\lambda v_{\lambda_2}$ corresponding to the Bruhat-maximal element of ${^\lambda\Sn}$. In this case, given $w=yv_{\lambda_2}$ we have
\[
f_\lambda^{(\lambda_2)} (w) = \prod_{t_a-t_b\in \S_k} (t_{y(a)} - t_{y(b)}) = \prod_{t_i-t_j \in N_h^-(v_{\lambda_2})} (t_{w(i)}-t_{w(j)}).
\]
This shows that $f_\lambda^{(\lambda_2)}$ is precisely the top-coset GKM class $f_\lambda$ introduced in the previous section.
\end{remark}

The function $f^{(k)}_\lambda: \Sn \to \C[t_1, \cdots, t_n]$ defined above sometimes, but does not always, yields a well-defined class in $H_T^*(\Hess(\mathsf{S},h))$, as we illustrate in the next example.

\begin{example}\label{example: nonexample and ex} Let $n=6$ and fix a Hessenberg function $h=(3,4,5,6,6,6)$.  In this case we have
\[
\Phi_h^- = \{ t_2-t_1, t_3-t_2, t_4-t_3, t_5-t_4, t_6-t_5, t_3-t_1, t_4-t_2, t_5-t_3, t_6-t_4 \}.
\]
For this example, we take $\lambda = (2,4)$.  We get:
\[
({^\lambda}\Sn)_0 = \{v_0, v_1, v_2, v_3, v_4 \}
\]
where 
\[
v_0^{-1}=[1,6,2,3,4,5], v_1^{-1} = [2,6,1,3,4,5], v_2^{-1}=[3,6,1,2,4,5], v_3^{-1}=[4,6,1,2,3,5], v_4^{-1} = [5,6,1,2,3,4].
\]
Consider the case when $k=1$.  We have $N(v_1^{-1}) = \{t_1-t_3, t_2-t_3, t_2-t_4, t_2-t_5, t_2-t_6\}$ and $\S_1 = \{t_1-t_3, t_2-t_5, t_2-t_6\}$ so,
\[
f^{(1)}_\lambda (yv): = \left\{ \begin{array}{ll}  (t_{y(2)}-t_{y(5)})(t_{y(2)}-t_{y(6)})(t_{y(1)}-t_{y(3)})   & \textup{ if } v\in \{v_1, v_2, v_3, v_4\}  \\  
0 & \textup{ otherwise }  \end{array} \right.
\]
For example, we have that 
\[
f^{(1)}_\lambda(v_3) = f^{(1)}_\lambda(v_1) = (t_2-t_5)(t_2-t_6)(t_1-t_3) \textup{ and } f^{(1)}_\lambda(s_4 v_1) = s_4(f_\lambda^{(1)}(v_1)) = (t_2-t_4)(t_2-t_6)(t_1-t_3).
\]
Consider $v_3^{-1}=[4,6,1,2,3,5]$.  Since $t_4-t_2\in \Phi_h^-$ and swapping the numbers 2 and 4 in $v_3^{-1}$ yields the permutation $(s_4v_1)^{-1} = [2,6,1,4,3,5]$ of length strictly less than $v_3^{-1}$ we know that the GKM graph of $\Hess(\mathsf{S},h)$ contains the following edge:
\begin{eqnarray}\label{eqn.edge}
\xymatrix{ v_3 \ar[rr]^{\;t_1-t_4\;}& & s_4v_1 }
\end{eqnarray}
where $t_1-t_4= v_3(t_4-t_2)$.  But $f^{(1)}_\lambda(v_3) - f^{(1)}_\lambda(s_4v_1)$ is not divisible by $t_1-t_4$, so $f^{(1)}_\lambda$ does not satisfy the GKM-conditions.  Now consider the case in which $k=2$.  As $\S_2 = \{ t_2-t_5, t_2-t_6, t_1-t_3, t_1-t_4 \}$ we have
\[
f^{(2)}_\lambda (yv): = \left\{ \begin{array}{ll}  (t_{y(2)}-t_{y(5)})(t_{y(2)}-t_{y(6)})(t_{y(1)}-t_{y(3)})(t_{y(1)}-t_{y(4)}) & \textup{ if } v\in \{v_2, v_3, v_4\}  \\  
0 & \textup{ otherwise. } \end{array} \right.
\]
In this case, the right $S_\lambda$ cosets in the support set of $f_\lambda^{(2)}$ are those with coset representatives $v_2$, $v_3$ and $v_4$. Note that $f^{(2)}_\lambda$ clearly satisfies the GKM conditions for the edge in~\eqref{eqn.edge} since $f_\lambda^{(2)}(s_4v_1) = 0$ and $t_1-t_4$ divides $f_\lambda^{(2)}(v_3)$.  As another example, by similar reasoning as above we obtain another edge of the GKM graph:
\[
\xymatrix{ v_4 \ar[rr]^{\;t_1-t_5\;}& & s_5v_2. }
\]
In this case, we have
\[
f^{(2)}_\lambda(v_4) = (t_2-t_5)(t_2-t_6)(t_1-t_3)(t_1-t_4) = f^{(2)}_\lambda(s_5v_2)
\]
since $s_5$ stabilizes the product $(t_2-t_5)(t_2-t_6)(t_1-t_3)(t_1-t_4)$.  Thus $f^{(2)}_\lambda$ satisfies the GKM conditions for this edge also.  The reader can check that $f^{(2)}_\lambda$ defines an equivariant cohomology class in $H^8_T(\Hess(\mathsf{S},h))$; this fact will also follow from Theorem~\ref{thm.class} below.
\end{example}

The content of the next result, which is also the first main theorem of this manuscript, is that when we impose an additional hypothesis on the integer $k$ in relation to the Hessenberg function $h$, 
then $f^{(k)}_\lambda$ is a well-defined GKM class. 
Theorem~\ref{thm.class} gives us a new construction of GKM-classes in $H_T^*(\Hess(\mathsf{S},h))$ that differs from that in the literature, since now more than one coset may get a non-zero label.

\begin{theorem}\label{thm.class}  Let $h:[n]\to [n]$ be a Hessenberg function and $\lambda = (\lambda_1, \lambda_2)$ a composition of $n$ with exactly two nonzero parts. Let $0\leq k \leq \lambda_2$. If $\lambda_1 > 1$ then we additionally assume that $h(k+2)=n$. 
Then the function $f^{(k)}_\lambda: \Sn \to \C[t_1, \ldots, t_n]$ defined in~\eqref{eqn.cohom-class} is a well-defined equivariant cohomology class in $H_T^{2 |\S_k|}(\Hess(\mathsf{S},h))$.
\end{theorem}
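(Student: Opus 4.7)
The plan is to verify the GKM compatibility condition in~\eqref{GKM for X(h)} directly for every edge $w \to ws_\gamma$ of the GKM graph of $\Hess(\mathsf{S},h)$, by reducing through a sequence of simplifications to a finite collection of key cases. First I would write $w = yv$ with $y \in S_\lambda$ and $v \in {}^\lambda\Sn$ per Lemma~\ref{lem.coset-decomp}, and split $\gamma \in N^-(w) \cap \Phi_h^-$ according to the decomposition $N^-(w) = N^-(v) \sqcup v^{-1}N^-(y)$ from Lemma~\ref{lem: y z factor}. In the case $\gamma \in v^{-1}N^-(y) \cap \Phi_h^-$, I would show that $s_{v(\gamma)} \in S_\lambda$: indeed, $v(\gamma) = t_a-t_b \in N^-(y)$ with $y \in S_\lambda$ and $a>b$; since $y$ preserves the blocks $\{1,\ldots,\lambda_1\}$ and $\{\lambda_1+1,\ldots,n\}$, a short case-check forces $a,b$ to lie in the same block, so $s_{t_a - t_b} \in S_\lambda$. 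Thus $ws_\gamma$ stays in the coset $S_\lambda v$, and Lemma~\ref{lemma: fk def}(3) combined with the standard fact that $Q - s_\alpha Q$ is always divisible by $\alpha$ yields the required divisibility by $w(\gamma) = yv(\gamma)$.

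For the remaining case $\gamma \in N^-(v) \cap \Phi_h^-$, I would apply Lemma~\ref{lemma: fk def}(3) again to factor out $y$: the identity $f_\lambda^{(k)}(w) - f_\lambda^{(k)}(ws_\gamma) = y\bigl(f_\lambda^{(k)}(v) - f_\lambda^{(k)}(vs_\gamma)\bigr)$ reduces the divisibility by $w(\gamma)$ to the divisibility of $f_\lambda^{(k)}(v) - f_\lambda^{(k)}(vs_\gamma)$ by $v(\gamma)$. Next, if $v \not\geq v_k$ then $f_\lambda^{(k)}(v) = 0$ by Lemma~\ref{lemma: fk def}(1); moreover $vs_\gamma < v$ in Bruhat order, and since the shortest coset representative of $vs_\gamma$ lies below $vs_\gamma$, it too fails $\geq v_k$, so both terms vanish and the condition is trivial. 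Hence the entire verification reduces to the following statement: for each $j$ with $k\leq j \leq \lambda_2$ and each $\gamma \in N^-(v_j)\cap \Phi_h^-$, show that $v_j(\gamma) \mid P - f_\lambda^{(k)}(v_j s_\gamma)$, where $P := \prod_{t_a-t_b \in \mathcal{S}_k}(t_a - t_b)$.

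The main analysis then splits into two subcases depending on where $v_j s_\gamma$ lies. Writing $v_j s_\gamma = y' v_{j'}$ with $y' \in S_\lambda$ and $v_{j'} \in {}^\lambda\Sn$: in subcase (i), $v_{j'} \not\geq v_k$, so $f_\lambda^{(k)}(v_j s_\gamma) = 0$ and one must show $v_j(\gamma) \in \mathcal{S}_k$ so that $v_j(\gamma)$ is a factor of $P$; in subcase (ii), $v_{j'} \geq v_k$, so $f_\lambda^{(k)}(v_j s_\gamma) = y' \cdot P$ and I would show that $y'$ stabilizes $\mathcal{S}_k$ as a set, forcing $y' \cdot P = P$ and making the difference vanish. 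I expect these two claims, proved by case-by-case enumeration of $N^-(v_j) \cap \Phi_h^-$ using the explicit one-line notation~\eqref{eq: vk} and~\eqref{eq: vk inverse}, to be the main obstacle. The hypothesis $h(k+2) = n$ when $\lambda_1 > 1$ is precisely what makes both claims hold: via~\eqref{eq: vk inverse} it forces additional roots of the form $t_a - t_b$ with $a \in \{2,\ldots,\lambda_1\}$ and $b \in \{\lambda_1+k+1,\ldots,n\}$ into $\mathcal{S}_k$, enlarging $\mathcal{S}_k$ with exactly the symmetry needed so that every $y'$ arising in subcase (ii) permutes it, while simultaneously guaranteeing that every drop out of the support in subcase (i) is labeled by a root already in $\mathcal{S}_k$. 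The contrast with $f_\lambda^{(1)}$ in Example~\ref{example: nonexample and ex}, where $\mathcal{S}_1$ is missing $t_2 - t_4$ when the hypothesis fails and the transposition $s_4$ consequently fails to preserve $P$, pinpoints exactly how the assumption rescues the argument.
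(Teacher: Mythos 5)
Your plan is correct and follows essentially the same route as the paper's proof: the same decomposition via Lemmas~\ref{lem.coset-decomp} and~\ref{lem: y z factor}, the same three terminal cases (reflection staying in the coset handled by the classical divisibility of $Q-s_\alpha Q$; drop out of the support forcing $v_j(\gamma)\in\mathcal{S}_k$; drop to a lower coset still in the support handled by showing the resulting $y'$ permutes $\mathcal{S}_k$), and the same one-line-notation computations, with only a cosmetic reorganization in that you factor out $y$ to reduce to shortest coset representatives up front. The one small misattribution is that the hypothesis $h(k+2)=n$ is needed only for the stability of $\mathcal{S}_k$ under $y'$ in your subcase (ii); the claim in subcase (i) that $v_j(\gamma)\in\mathcal{S}_k$ follows from the monotonicity of $h$ and the comparison of $v_k^{-1}$ with $v_j^{-1}$ alone.
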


Before beginning the proof, we emphasize that the assumption of $h(k+2)=n$ in the statement above is necessary, as noted for $k=1$ in~Example~\ref{example: nonexample and ex} above.

\begin{proof}[Proof of Theorem~\ref{thm.class}]
To prove the theorem, we must show the following. Let 
\[
\xymatrix{w \ar[rr]^{\;w(\gamma) \; } &&  ws_\gamma }
\]
be an edge of the GKM graph of $\Hess(\mathsf{S},h)$.  Then $w,ws_\gamma \in\Sn$ are permutations such that $\ell(ws_\gamma)<\ell(w)$ and  $w(\gamma) \in N(w^{-1})\cap w(\Phi_h^-)$.  We must prove that $w(\gamma)$ divides $f^{(k)}_\lambda(w) - f^{(k)}_\lambda(ws_\gamma)$. We argue on a case-by-case basis.

\smallskip
\noindent \textbf{Case (1):} Suppose $w$ is not contained in the support of $f^{(k)}_\lambda$, i.e., $f^{(k)}_\lambda(w) = 0$. In this case we claim that $s_\gamma w$ is also not contained in the support of $f^{(k)}_\lambda$. This is because if $ws_\gamma$ is contained in some $S_\lambda v_j$ with $k \leq j \leq \lambda_2$, then $ws_\gamma \geq v_j$ in Bruhat order, which means $w > ws_\gamma \geq v_j \geq v_k$ in Bruhat order. 
By Lemma~\ref{lemma: fk def} this implies $w$ is contained in the support of $f^{(k)}_\lambda$, contradicting our assumption. Hence 
$f^{(k)}_\lambda$ vanishes at both $w$ and $ws_\gamma$, and the claim follows trivially.

\smallskip

\noindent \textbf{Case (2):} We now assume $w$ lies in the support of $f^{(k)}_\lambda$. By Lemma~\ref{lemma: fk def}, this is equivalent to the condition that there exists $j$ with $k \leq j \leq \lambda_2$ such that $w \in S_\lambda v_j$. We write $w=yv_j$ for $y\in S_\lambda$. It will be convenient to divide this further into sub-cases, according to the coset in which $ws_\gamma$ lies. In fact, we first argue that $ws_\gamma$ cannot lie in certain right cosets;  more precisely, we claim that, under the given hypotheses, it cannot happen that $ws_\gamma \in S_\lambda v_\ell$ for $j < \ell \leq \lambda_2$. Indeed, if $ws_\gamma = y_1v_\ell$ for such an $\ell$ and $y_1 \in S_\lambda$ then we have
\[
ws_\gamma \leq w \Rightarrow y_1 v_\ell \leq y v_j \Rightarrow v_\ell \leq v_j \Rightarrow \ell\leq j
\]
where the second implication is by~\cite[Proposition 2.5.1]{Bjorner-Brenti2005} and the third follows from Lemma~\ref{lem.bruhat-order}. Hence we obtain a contradiction.

Throughout the arguments below, we fix $j$ as the integer such that $w \in S_\lambda v_j$ and write $w=yv_j$ for $y\in S_\lambda$. The above discussion implies that the three remaining cases we must consider are as follows: 

\begin{itemize} 
\item[(2-a)] $ws_\gamma \not \in \mathrm{supp}(f^{(k)}_\lambda)$, or equivalently, $ws_\gamma \not \in S_\lambda v$ for any $v \in \{v_k, \cdots, v_{j-1}, v_j\}$, 
\item[(2-b)] $ws_\gamma \in S_\lambda v_j$, or 
\item[(2-c)] $ws_\gamma \in S_\lambda v_\ell$ for some $k \leq \ell < j$. 
\end{itemize}

Before proceeding, we give an explicit description of the root $\beta:= v_j(\gamma)$ in each of these cases.  By assumption, $\gamma\in N_h^-(w) = N^-(w)\cap \Phi_h^-$ and $N^-(w) = N^-(v_j ) \sqcup v_j^{-1}N^-(y)$ by Lemma~\ref{lem: y z factor}. Thus we have
\[
\beta = v_j(\gamma) \in v_j N^-(w) = N(v_j^{-1}) \sqcup N^-(y).
\]
We can decompose the set $N(v_j^{-1})$ appearing in the RHS of the above equation even further. The formula for $N(v_k^{-1})$ for different values of $k$ as given in Lemma~\ref{lemma.vk-invs} implies
\[
N(v_j^{-1} ) = N(v_k^{-1}) \sqcup \{t_1-t_b \mid b\in \{ \lambda_1+k+1, \ldots, \lambda_1+j \}\}, 
\]
and therefore, combining the previous two statements, we obtain 
\begin{eqnarray}\label{eq: 31}
\beta \in N(v_k^{-1}) \sqcup  \{t_1-t_b \mid b\in \{ \lambda_1+k+1, \ldots, \lambda_1+j \}\} \sqcup N^-(y).
\end{eqnarray}
Now consider $ws_\gamma = yv_j s_\gamma = y s_{v_j(\gamma)} v_j = y s_\beta v_j$. 
Write $\beta = t_a-t_b$.  Then we obtain the one-line notation for $s_\beta v_j$ from that of $v_j$ by swapping the entries $a$ and $b$.  Equivalently, the one-line notation for $(s_\beta v_j)^{-1}$ is obtained from that of $v_j^{-1}$ by exchanging the entries \textit{ in positions} $a$ and $b$. Using the formulas for the one-line notation of $v_j^{-1}$ from~\eqref{eq: vk inverse} (or equivalently, the formula for the one-line notation of $v_j$ from~\eqref{eq: vk}) and Lemma~\ref{lemma.vk-invs}, it is now straightforward to check the following characterizations of the three cases (2-a), (2-b), (2-c) above.  
First we consider case (2-a). We claim that if $w s_\gamma \not \in \mathrm{supp}(f^{(k)}_\lambda)$ then $\beta \in N(v_k^{-1}) \cap v_j(\Phi_h^-)$. 
From~\eqref{eq: 31} we know that $\beta$ can lie in one of $3$ sets: 
\[
N(v_k^{-1}), \, \{ t_1 - t_b \, \mid \, b \in \{\lambda_1+k+1, \cdots, \lambda_1+j\}\}, \, \textup{ and } \, N^-(y).
\]
If $\beta \in N^-(y)$ then $s_\beta \in S_\lambda$ and hence $ys_\beta v_j \in S_\lambda v_j$, which would imply $y s_\beta v_j \in \mathrm{supp}(f_\lambda^{(k)})$. Hence this cannot occur. If $\beta \in \{t_1 - t_b \, \mid \, b \in \{\lambda_1 + k+1, \cdots, \lambda_1 + j\} \}$ then from the formula for the one-line notation of $v_j$ in~\eqref{eq: vk} and from the description of shortest coset representatives given in Remark~\ref{remark: finding coset reps} it follows that $s_\beta v_j$ lies in the right coset of an element $v \in \{v_k, \cdots, v_{j-1}\}$, hence $y s_\beta v_j \in \mathrm{supp}(f_\lambda^{(k)})$. Thus this also cannot occur. We conclude that if $s_\beta v_j \not \in \mathrm{supp}(f_\lambda^{(k)})$ then $\beta \in N(v_k^{-1})$.  
Since $\gamma\in \Phi_h^-$, we are always assuming $\beta \in v_j(\Phi_h^-)$ and we now conclude that if $w s_\gamma \not \in 
\mathrm{supp}(f^{(k)}_\lambda)$ then $\beta \in N(v_k^{-1}) \cap v_j(\Phi_h^-)$.  
Second, for case (2-b), observe that $ws_\gamma = y s_\beta v_j \in S_\lambda v_j$ if and only if $s_\beta v_j \in S_\lambda v_j$ since $y \in S_\lambda$, and the latter is equivalent to $s_\beta \in S_\lambda$. From the decomposition~\eqref{eq: 31} and Lemma~\ref{lemma.vk-invs} it follows that $s_\beta \in S_\lambda$ if and only if $\beta \in N^-(y)$. Thus we obtain that $ws_\gamma \in S_\lambda v_j$ if and only if $\beta \in N^-(y) \cap v_j(\Phi_h^-)$. 
Third, for case (2-c), we can use similar reasoning to see that $ws_\gamma = y s_\beta v_j$ lies in $S_\lambda v_\ell$ for some $k \leq \ell < j$ if and only if 
\[
\beta \in \{t_1 - t_b \, \mid \, b \in \{\lambda_1+k+1, \cdots, \lambda_1 + j \}\} \cap v_j(\Phi_h^-).
\]

We can now argue each case separately, based on the above characterizations of the root $\beta$.

\smallskip
\noindent \textbf{Sub-case (2-a):} In this case $f^{(k)}_\lambda(ws_\gamma) = 0$, so in order to prove the GKM condition it suffices to prove that $w(\gamma)$ divides $f^{(k)}_\lambda(w)$, i.e.~that $w(\gamma) = y(\beta)$ for some $\beta\in \mathcal{S}_k$. 
Since $w = yv_j$ this is equivalent to $v_j(\gamma) = \beta \in \mathcal{S}_k$. Recall from~\eqref{eq: def Sk} that $\mathcal{S}_k = N(v_k^{-1}) \cap v_k(\Phi_h^-)$. As we saw above, in this case we have $\beta\in N(v_k^{-1}) \cap v_j(\Phi_h^-)$, so it remains to establish that $\beta\in v_k(\Phi_h^-)$.  Write $\beta=t_a-t_b$.  The assumption that $t_a-t_b\in v_j(\Phi_h^-)$ implies that $v_j^{-1}(a)\leq h(v_j^{-1}(b))$.  Since 
$\beta \in N(v_k^{-1})$, from Lemma~\ref{lemma.vk-invs} it follows that 
$a\in \{1, \ldots, \lambda_1\}$ and $b\in \{\lambda_1+1, \ldots, n\}$. Now the explicit formula in~\eqref{eq: vk inverse} for the one-line notation of $v_k^{-1}$ and $v_j^{-1}$ implies  $v_k^{-1}(a)\leq v_j^{-1}(a)$ and $v_k^{-1}(b)\geq v_j^{-1}(b)$.  Thus $v_{k}^{-1}(a) \leq v_j^{-1}(a) \leq h(v_j^{-1}(b)) \leq h(v_k^{-1}(b))$ implying $\beta=t_a-t_b\in v_k(\Phi_h^-)$ as desired, and case (2-a) is complete.

\smallskip
\noindent \textbf{Sub-case (2-b):} In this case, we have $\beta = v_j(\gamma)\in N^-(y)$, so $s_\beta \in S_\lambda$. This implies that $s_{y(\beta)} = s_{y v_j(\gamma)} = s_{w(\gamma)} \in S_\lambda$ also. 
Now from Lemma~\ref{lemma: fk def} we conclude that
\[
f^{(k)}_\lambda(ws_\gamma) = f^{(k)}_\lambda(s_{w(\gamma)}w) = s_{w(\gamma)}(f^{(k)}_\lambda (w)).
\]
It is a classical fact that $w(\gamma)$ divides $f - s_{w(\gamma)}f$, so we obtain our result.
 This completes case (2-b).

\smallskip
\noindent \textbf{Sub-case (2-c):} 
In this case we have $ws_\gamma \in S_\lambda v_\ell$ for $k \leq \ell < j$, so in particular $f^{(k)}_\lambda(ws_\gamma) \neq 0$. We aim to show that $f^{(k)}_\lambda(w s_\gamma) = f^{(k)}_\lambda(w)$, from which it follows that $f^{(k)}_\lambda(w) - f^{(k)}_\lambda(ws_\gamma) = 0$, which is clearly divisible by $w(\gamma)$.

First observe that the only way we can have $ws_\gamma \in S_\lambda v_\ell$ is if $\gamma = t_{j+1} - t_{\ell+1}$. 
This is because $w \in S_\lambda v_j$, which implies the entries $\{1, \ldots, \lambda_1\}$ are in the $(j+1)$-th and the last $\lambda_1 -1$ positions in the one-line notation of $w$. Any element in $S_\lambda v_\ell$ must have the $\{1, \ldots, \lambda_1\}$ entries in the $(\ell+1)$-th and last $\lambda_1-1$ positions of its one-line notation. In order for this to happen, we must have $s_\gamma$ exchange the positions $j+1$ and $\ell+1$. Next 
recall the decomposition $w=yv_j = yv_0u_j$ where $v_0$ and $u_j$ are as defined in~\eqref{eqn.v0formula} and~\eqref{eq: def uk}, respectively.  Since $s_\gamma$ is the reflection swapping $j+1$ and $\ell+1$, an explicit computation yields 
\[
ws_\gamma = \left\{\begin{array}{ll} y v_0 s_{\ell+2}\cdots s_{j} u_\ell  & \textup{ if $j>\ell+1$} \\ 
yv_0u_\ell = y v_\ell & \textup{ if $j=\ell+1$} \end{array}\right. 
\]
which implies that $w s_\gamma = y v_\ell$ if $j = \ell+1$. Hence for the case $j=\ell+1$ it is immediate that $f^{(k)}_\lambda(w s_\gamma) = f_\lambda^{(k)}(y v_\ell) = \prod_{\eta \in \mathcal{S}_k} y(\eta) = f^{(k)}_\lambda(w)$ by definition of $f_\lambda^{(k)}$. Thus $f_\lambda^{(k)}(w s_\gamma) - f_\lambda^{(k)}(w) = 0$ and we are done. Therefore, in what follows we may assume that $j > \ell+1$. In this case we claim that
\begin{eqnarray*}
ws_\gamma  = y v_0 s_{\ell+2}\cdots s_{j} u_\ell  = y' v_0 u_\ell 
\end{eqnarray*}
for some $y'\in S_\lambda$.  Indeed, we get
\[
y' = y(v_0 s_{\ell+2} \cdots s_j v_0^{-1}) = y s_{v_0(\ell+2)} \cdots s_{v_0(j)}
\]
and we know $v_0(i) = \lambda_1+i-1$ for all $i=2, \ldots, \lambda_2$ from~\eqref{eqn.v0formula}. 
Since $\ell+1 < j$ by assumption and $j \leq \lambda_2$ we know $\ell+2 \leq \lambda_2$ and also since $j > \ell+1$ where $\ell \geq 0$, we know $j \geq 2$. So 
it follows that
\begin{eqnarray}\label{eqn.y'}
y' = ys_{\lambda_1+\ell+1}s_{\lambda_1+\ell+2}\cdots s_{\lambda_1+j-1}\in S_\lambda.
\end{eqnarray}
Let $y_1:=s_{\lambda_1+\ell+1}s_{\lambda_1+\ell+2}\cdots s_{\lambda_1+j-1}$. 
To summarize, we have shown $ws_\gamma = yy_1v_\ell$, where $y y_1 \in S_\lambda$.

By definition of $f_\lambda^{(k)}$ we have 
\[
f_\lambda^{(k)}(ws_\gamma) = \prod_{\eta\in \mathcal{S}_k} yy_1(\eta) \quad \textup{ and }  \quad f_\lambda^{(k)}(w) = \prod_{\eta\in \mathcal{S}_k} y(\eta).
\]
If we establish the following equality
\begin{eqnarray}\label{eqn.y1-action}
y_1 \left(\prod_{\eta\in \mathcal{S}_k} \eta \right) = \prod_{\eta\in \mathcal{S}_k} \eta
\end{eqnarray}
then it would follow that $f_\lambda^{(k)}(w s_\gamma) = f_\lambda^{(k)}(w)$, hence
$f_\lambda^{(k)}(w s_\gamma) - f_\lambda^{(k)}(w) = 0$ and we are done. In the remainder of the argument we therefore focus on proving~\eqref{eqn.y1-action}. 

To prove~\eqref{eqn.y1-action}, first notice that $y_1\in \Stab(1,2, \ldots, \lambda_1+\ell)$.  
Motivated by this, using the explicit description of $N(v_k^{-1})$ in Lemma~\ref{lemma.vk-invs} we 
decompose the elements of $\mathcal{S}_k$ into two subsets $\mathcal{S}_k = \mathcal{S}_{k}^{(1)}\sqcup \mathcal{S}_k^{(2)}$, 
where we define 
\begin{eqnarray}\label{eqn.set1}
\mathcal{S}_k^{(1)} := \{t_a-t_b\in \mathcal{S}_k \mid a,b\in \{1,2,\ldots, \lambda_1+k\}\} 
\end{eqnarray}
and
\begin{eqnarray}\label{eqn.set2}
\mathcal{S}_k^{(2)} =  \{ t_a-t_b \in \mathcal{S}_k \mid (a,b)\in \{2, \ldots, \lambda_1\}\times \{\lambda_1+k+1,\ldots, n\} \}. 
\end{eqnarray}
Since $k\leq \ell$, it is clear that if $\eta\in \mathcal{S}_k^{(1)}$ then $y_1(\eta) = \eta$.  
Next
we analyze the set $\mathcal{S}_k^{(2)}$. 
Note first that in the case $\lambda_1=1$, then $\mathcal{S}_k = \mathcal{S}_k^{(1)}$ and hence we are done. Thus for the remainder of the argument we may assume $\lambda_1 > 1$. 
For any $(a,b)$ with $a \in \{2, \ldots, \lambda_1\}$ and $b \in \{\lambda_1+k+1, \ldots, n\}$ 
then 
\[
v_{k}^{-1}(a)\in \{\lambda_2+2, \ldots, n\} \textup{ and } v_k^{-1}(b) \in \{k+2, k+3,\ldots, \lambda_2+1\}.
\]
Since $\lambda_1 > 1$, we have by the hypothesis in the statement of the theorem that $h(k+2)=n$. Thus 
$h(v_k^{-1}(b)) = n$ for any $v_k^{-1}(b) \in \{k+2, \ldots, \lambda_2+1\}$ and 
$v_k^{-1}(a)\leq n = h(v_k^{-1}(b))$, 
 implying $t_a - t_b \in v_k(\Phi_h^-)$ for all $a \in \{2, \ldots, \lambda_1\}$ and $b \in \{\lambda_1+k+1, \ldots, n\}$. 
 The above discussion implies that 
\[
\mathcal{S}_k^{(2)} = \{ t_a-t_b  \mid (a,b)\in \{2, \ldots, \lambda_1\}\times \{\lambda_1+k+1,\ldots, n\} \}.
\]
Since $y_1 = s_{\lambda_1+\ell+1}s_{\lambda_1+\ell+2}\cdots s_{\lambda_1+j-1}$ permutes the elements of $\{\lambda_1+k+1, \ldots, n\}$ (because $\lambda_1+k+1\leq \lambda_1+\ell+1$ and $\lambda_1+j-1\leq \lambda_1+\lambda_2-1 = n-1$ ) and stabilizes the elements of $\{2, \ldots, \lambda_1\}$, it follows that $y_1(\mathcal{S}_k^{(2)}) = \mathcal{S}_k^{(2)}$.   Since we already saw $y_1$ stabilizes the elements in $\mathcal{S}_k^{(1)}$, we conclude~\eqref{eqn.y1-action} holds, as desired. This completes the (2-c) case and hence the proof. 
\end{proof}

By Lemma~\ref{lemma: fk def}, the class $f_\lambda^{(k)}$ constructed in Theorem~\ref{thm.class} is fixed by $S_\lambda$ under the dot action.  As in the case of the top-coset classes of the previous section, we consider the orbit of $f_\lambda^{(k)}$ under the dot action:  
\[
\{ v \cdot f_\lambda^{(k)} \mid v\in \Sn^\lambda  \}.
\]
We will prove in Section~\ref{subsec: LI for fk} that this set of classes is $H^*_T(\pt)$-linearly independent, in a special case and under further assumptions on the Hessenberg function $h$.

We now discuss potential connections between our Theorem~\ref{thm.class} and some other recent results by Cho, Hong, and Lee on equivariant cohomology classes for the regular semisimple Hessenberg variety.  
We first remark that, as noted in the introduction, the advantages of our construction are: (1) we have an explicit formula for the value of $f_\lambda^{(k)}$ at each $w\in \Sn$, namely that in~\eqref{eqn.cohom-class} and (2) we can give simple, concrete descriptions of the elements in the $\Sn$-orbit of $f_\lambda^{(k)}$ as well as their support sets.  On the other hand, the drawbacks of our construction are that, in its current form, the construction only applies in the case that the composition $\lambda$ has two parts, and we are not yet able to use such classes to construct a basis for the free module $H_T^*(\Hess(\mathsf{S}, h))$.  
In contrast, Cho, Hong, and Lee recently gave a geometric construction of an $H_T^*(\pt)$-module basis for $H_T^*(\Hess(\mathsf{S}, h))$ for general Hessenberg functions \cite{Cho-Hong-Lee2020}.  Their classes arise from a Bia\l{}ynicki-Birula decomposition of $\Hess(\mathsf{S}, h)$.  While the existence of such classes is not new, it is in general a difficult question to compute the values of these classes at different permutations $w \in \Sn$. The results of~\cite{Cho-Hong-Lee2020} are significant in that they make progress toward describing these classes explicitly.  For example, the authors describe the support set of each class combinatorially in terms of the Hessenberg function $h$.  On the other hand, these classes do not give a permutation basis of $H_T^*(\Hess(\mathsf{S}, h))$, and an explicit formula for their values at $w \in \Sn$ similar to that given in~\eqref{eqn.cohom-class} is only known in the case where $h=(2,3,\ldots, n,n)$.  In that special case, the authors express certain equivariant classes defined by Chow in the statement of his \textit{Erasing marks conjecture} \cite{Chow-conj} as linear combinations of their ``Bia\l{}ynicki-Birula classes'', and use their results to prove Chow's conjecture. 
 As we have already noted, Chow's classes are top-coset classes for appropriately chosen Young subgroups.  
 
 This recent progress, together with our Theorem~\ref{thm.class} above, naturally suggest the following open problem. We expect a solution to this problem to lead to further progress in the ``permutation basis program'' in more general cases of Hessenberg functions. 

\begin{problem}\label{prob.BB-classes} 
Let $h: [n] \to [n]$ be a Hessenberg function and $\lambda=(\lambda_1, \lambda_2)$ a composition of $n$ with exactly two parts satisfying the assumptions of Theorem~\ref{thm.class}.  Compute that expansion of $f_\lambda^{(k)}$ as an $H_T^*(\pt)$-linear combination of the $H_T^*(\pt)$-module basis of  ``Bia\l{}ynicki-Birula classes'' for $H_T^*(\Hess(\mathsf{S}, h))$ studied by Cho--Hong--Lee in \cite{Cho-Hong-Lee2020}. 
\end{problem}

Finally, in the last part of this section, we prove some properties of our classes $f_\lambda^{(k)}$ which will be useful in the analysis in the following sections. 
We begin with a proof of the analogue of Lemma~\ref{lem.left-coset-description}, describing the support set of each $v\cdot f_\lambda^{(k)}$ for $v\in \Sn^\lambda$. Given the composition $\lambda$ of $n$, recall that for each $v_j\in ({^\lambda\Sn})_0$ we obtain from Lemma~\ref{lemma.right-to-left} a bijection 
\[
\phi_{v_j}: S_\lambda\backslash \Sn \to \Sn/ S_\lambda^{(j)};\; \phi_{v_j}(S_\lambda v) = v^{-1}v_j S_\lambda^{(j)}
\]
where $S_\lambda^{(j)}:= v_j^{-1}S_\lambda v_j$. Recall also that $(\Sn^\lambda)^{-1} = {^\lambda\Sn}$ by~\eqref{eq: left and right coset reps inverse}. 

\begin{lemma}\label{lem.left-coset-support} 
Let $\lambda=(\lambda_1, \lambda_2)$ be a composition of $n$ with two parts and $0\leq k\leq \lambda_2$.  For each $v\in \Sn^\lambda$ we have
\[
\left( v\cdot f_\lambda^{(k)}\right)(w) = \left\{ \begin{array}{cl} \prod_{\eta \in \S_k} wv_j^{-1}(\eta) & \textup{if $w\in \phi_{v_j}(S_\lambda v^{-1})$ for some $k\leq j \leq \lambda_2$}\\ 0 & \textup{otherwise.} \end{array} \right.
\]
In particular, $v\cdot f_\lambda^{(k)} : \Sn \to \C[t_1, \ldots, t_n]$ has support equal to the union of left cosets, 
\[
\bigsqcup_{k\leq j \leq \lambda_2} \phi_{v_j}(S_\lambda v^{-1}) = \bigsqcup_{k\leq j \leq \lambda_2} vv_j S_\lambda^{(j)}.
\] 
\end{lemma}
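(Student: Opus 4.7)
The plan is to unpack the definition of the dot action and use the support description for $f_\lambda^{(k)}$ already established in Lemma~\ref{lemma: fk def}(1), together with the coset bijection from Lemma~\ref{lemma.right-to-left}.

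First I would start from the formula $(v \cdot f_\lambda^{(k)})(w) = v(f_\lambda^{(k)}(v^{-1}w))$ coming from~\eqref{def of Tymoczko rep}. The value is nonzero if and only if $f_\lambda^{(k)}(v^{-1}w) \neq 0$, and Lemma~\ref{lemma: fk def}(1) identifies this condition with $v^{-1}w \in S_\lambda v_j$ for some $k \leq j \leq \lambda_2$. I would then translate this right-coset condition into a left-coset condition by writing $v^{-1}w = y v_j$ with $y \in S_\lambda$, solving for $w = v y v_j = v v_j (v_j^{-1} y v_j)$, and noting that $v_j^{-1} y v_j \in v_j^{-1} S_\lambda v_j = S_\lambda^{(j)}$. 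This exhibits $w \in v v_j S_\lambda^{(j)}$, and conversely any such $w$ clearly satisfies $v^{-1}w \in S_\lambda v_j$. Matching this against the definition of $\phi_{v_j}$ from Lemma~\ref{lemma.right-to-left}, namely $\phi_{v_j}(S_\lambda v^{-1}) = (v^{-1})^{-1} v_j S_\lambda^{(j)} = v v_j S_\lambda^{(j)}$, identifies the support set as claimed.

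Next I would compute the value. Given $w = v y v_j$ with $y \in S_\lambda$, apply the defining formula~\eqref{eqn.cohom-class}:
\[
f_\lambda^{(k)}(v^{-1}w) = f_\lambda^{(k)}(y v_j) = \prod_{\eta \in \mathcal{S}_k} y(\eta),
\]
treating $\eta = t_a - t_b$ as a root on which $y$ acts by index permutation. Applying $v$ then yields $(v \cdot f_\lambda^{(k)})(w) = \prod_{\eta \in \mathcal{S}_k} vy(\eta)$. Since $w v_j^{-1} = v y$, this equals $\prod_{\eta \in \mathcal{S}_k} (w v_j^{-1})(\eta)$, which is exactly the asserted formula.

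Finally, for the disjointness giving a genuine disjoint union decomposition of the support, I would invoke the second statement of Lemma~\ref{lemma.right-to-left}: the left cosets $\phi_{v_j}(S_\lambda v^{-1})$ for distinct right cosets $S_\lambda v_j$ are pairwise disjoint. Since the $v_j$ for $k \leq j \leq \lambda_2$ are shortest representatives of distinct right cosets of $S_\lambda$ (by Lemma~\ref{lem.bruhat-order}), the resulting left cosets $v v_j S_\lambda^{(j)}$ are pairwise disjoint as well. I do not anticipate a serious obstacle here; the main care needed is tracking the left/right coset translation through $\phi_{v_j}$ and keeping the conjugated subgroups $S_\lambda^{(j)}$ straight, since they depend on $j$.
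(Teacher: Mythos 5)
Your proposal is correct and follows essentially the same route as the paper's proof: unpack the dot action, use Lemma~\ref{lemma: fk def}(1) to reduce to $v^{-1}w \in S_\lambda v_j$, convert to the left coset $vv_jS_\lambda^{(j)} = \phi_{v_j}(S_\lambda v^{-1})$, and compute the value via $vy = wv_j^{-1}$. Your closing remark on the pairwise disjointness of the cosets via Lemma~\ref{lemma.right-to-left} is a small addition the paper leaves implicit, but it is consistent with the argument.
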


\begin{proof} Let $v\in \Sn^\lambda$. We have $\left(v \cdot f_\lambda^{(k)}\right)(w)\neq 0$ if and only if $f_\lambda^{(k)}(v^{-1}w)\neq 0$.  The latter condition is equivalent by Lemma~\ref{lemma: fk def} to the condition that $v^{-1}w\in S_\lambda v_j$ for some $k\leq j \leq \lambda_2$.  We have 
\begin{eqnarray*}
v^{-1}w = y v_j \textup{ for $y\in S_\lambda$ } &\Leftrightarrow& w = vv_j v_j^{-1}yv_j \textup{ for $y\in S_\lambda$}\\
&\Leftrightarrow& w \in vv_j S_\lambda^{(j)} = \phi_{v_j}(S_\lambda v^{-1}). 
\end{eqnarray*}
This proves the assertion about the support of $v \cdot f_\lambda^{(k)}$.  Given such a $w$, write $v^{-1}w = yv_j$ for $y\in S_\lambda$.  Then $vy = wv_j^{-1}$ and we get
\[
\left(v\cdot f_\lambda^{(k)}\right)(w) := v(f_{\lambda}^{(k)} (v^{-1}w)) = v\left(  \prod_{\eta\in \S_k} y(\eta) \right) = \prod_{\eta\in \S_k} wv_j^{-1}(\eta).
\]
This proves the lemma.
\end{proof}

Our last lemma of this section states that the stabilizer of the element $f_\lambda^{(k)}$ is precisely $S_\lambda$. 

\begin{lemma}\label{lemma: stabilizer is Slambda} Let $\lambda= (\lambda_1, \lambda_2)$ be a composition of $n$ with two parts and $0\leq k \leq \lambda_2$.  If $\lambda_1=1$ then we also assume that $k\geq 1$.  Then the stabilizer in $\Sn$ of $f_\lambda^{(k)}$ is equal to $S_\lambda$. 
\end{lemma}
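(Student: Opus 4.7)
The plan is to prove the two inclusions separately. The inclusion $S_\lambda \subseteq \Stab(f_\lambda^{(k)})$ is immediate from Lemma~\ref{lemma: fk def}(2), so the work lies in the reverse inclusion. For that, I will first reduce to shortest left coset representatives and then compare supports combinatorially. Concretely, given any $v \in \Stab(f_\lambda^{(k)})$, I apply Lemma~\ref{lem.coset-decomp}(2) to write $v = v'y'$ with $v' \in \Sn^\lambda$ and $y' \in S_\lambda$; since $y'$ stabilizes $f_\lambda^{(k)}$, we have $v \cdot f_\lambda^{(k)} = v' \cdot f_\lambda^{(k)}$, and because $\Sn^\lambda \cap S_\lambda = \{e\}$ (as $e$ is the unique shortest representative of the trivial left coset), the condition $v \in S_\lambda$ is equivalent to $v' = e$. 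So it suffices to show: if $v' \in \Sn^\lambda$ and $v' \cdot f_\lambda^{(k)} = f_\lambda^{(k)}$, then $v' = e$.

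The next step is to reformulate the support of $f_\lambda^{(k)}$ combinatorially. For $w \in \Sn$, let $P(w) := \{w^{-1}(1), w^{-1}(2), \ldots, w^{-1}(\lambda_1)\}$ denote the set of positions of the values $1, \ldots, \lambda_1$ in the one-line notation of $w$. Since $S_\lambda$ permutes $\{1, \ldots, \lambda_1\}$ among themselves, $P(w)$ is constant on each right coset $S_\lambda v_j$, and the explicit formula~\eqref{eq: vk} gives $P(v_j) = \{j+1\} \cup R$ where $R := \{\lambda_2+2, \ldots, n\}$. Combined with the support description in Lemma~\ref{lemma: fk def}(1), this yields: $w \in \mathrm{supp}(f_\lambda^{(k)})$ if and only if $P(w) \in \mathcal{P}$, where $\mathcal{P} := \{\{p\} \cup R : k+1 \leq p \leq \lambda_2+1\}$. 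Setting $B := \{1, \ldots, \lambda_1\}$ and $A := (v')^{-1}(B)$, a short calculation gives $P(v'w) = w^{-1}(A)$, so the hypothesis $v' \cdot f_\lambda^{(k)} = f_\lambda^{(k)}$ is equivalent to the combinatorial condition that for every $w \in \Sn$, $w^{-1}(B) \in \mathcal{P}$ if and only if $w^{-1}(A) \in \mathcal{P}$.

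The final step is to derive a contradiction assuming $v' \neq e$. In this case $v' \notin S_\lambda$, hence $A \neq B$; let $d := |A \setminus B| = |B \setminus A| \geq 1$, and note that $d \leq \min(\lambda_1, \lambda_2)$. The goal is to exhibit a permutation $w$ with $w^{-1}(B) = \{k+1\} \cup R \in \mathcal{P}$ but $w^{-1}(A) \notin \mathcal{P}$; it suffices to arrange $w^{-1}(A \cap B) \not\supseteq R$, since $w^{-1}(A \setminus B)$ is automatically disjoint from $w^{-1}(B) \supseteq R$. When $\lambda_1 \geq 2$ the set $R$ is nonempty, and one can define $w^{-1}$ on $B$ to biject $B$ onto $\{k+1\} \cup R$ by arranging either that the unique element of $B \setminus A$ maps into $R$ (in the case $d = 1$, forcing $k+1 \in w^{-1}(A \cap B)$) or, if $d \geq 2$, simply observing that $|A \cap B| = \lambda_1 - d < |R|$ rules out $R \subseteq w^{-1}(A \cap B)$; the remaining values of $w^{-1}$ can then be chosen to produce a valid permutation. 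The main obstacle is the degenerate case $\lambda_1 = 1$, in which $R = \emptyset$ and the previous argument collapses; here the hypothesis $k \geq 1$ becomes essential. In that case $A = \{a\}$ for some $a \neq 1$, and choosing $w$ with $w^{-1}(1) = n$ and $w^{-1}(a) = 1$ produces $\{n\} \in \mathcal{P}$ but $\{1\} \notin \mathcal{P}$ (using $k \geq 1$), yielding the required contradiction.
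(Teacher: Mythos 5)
Your proof is correct, and while it shares the paper's overall skeleton, the combinatorial core is executed differently. Both arguments reduce to a shortest left coset representative $v' \in \Sn^\lambda$ and then use only the fact that $v' \cdot f_\lambda^{(k)} = f_\lambda^{(k)}$ forces the supports to agree. From there the paper extracts a single consequence — that $v'v_k$ must lie in some coset $S_\lambda v_\ell$ with $k \le \ell \le \lambda_2$ — and exploits the increasing-entries property of shortest left coset representatives to read off $v'(1)=1,\dots,v'(\lambda_1)=\lambda_1$ directly from the one-line notation. You instead encode each right coset by the position set $P(w)=w^{-1}(\{1,\dots,\lambda_1\})$ (a correct identification: for a two-part composition the right cosets of $S_\lambda$ are exactly the fibers of $P$), and then, assuming $A=(v')^{-1}(B)\neq B$, construct an explicit witness permutation $w$ at which the supports of $f_\lambda^{(k)}$ and $v'\cdot f_\lambda^{(k)}$ differ. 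Your route is slightly more robust — it uses nothing about $v'$ being a shortest representative beyond the implication $v'\neq e \Rightarrow v'\notin S_\lambda$, so it in effect shows that no $v\notin S_\lambda$ preserves the support — while the paper's is shorter because it needs only the single coset $S_\lambda v_k$ rather than a quantification over all $w$. Both treatments isolate the same degenerate case $\lambda_1=1$ (where $R=\emptyset$) and use the hypothesis $k\ge 1$ in the same way there. One small wording caveat: the stabilizer hypothesis is not \emph{equivalent} to your combinatorial condition (equality of supports is strictly weaker than equality of classes); only the forward implication holds, but that is the direction your argument uses, so nothing breaks.
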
 

\begin{proof} 
We have already seen in Lemma~\ref{lemma: fk def}(2) that $S_\lambda$ stabilizes $f_\lambda^{(k)}$. Hence it suffices to show that if $v \in \Sn$ satisfies $v \cdot f_\lambda^{(k)} = f_\lambda^{(k)}$, then $v \in S_\lambda$. Since we already know that $S_\lambda$ is contained in the stabilizer, it suffices to prove the statement for $v \in S_n^{\lambda}$ a shortest left coset representative. So suppose $v \in S_n^{\lambda}$ and suppose that $v \cdot f_\lambda^{(k)} = f_\lambda^{(k)}$. We wish to show that $v \in S_\lambda$, which means $v$ is the identity permutation (since the shortest left coset representative for the identity coset is the identity). Since $v \cdot f_\lambda^{(k)} = f_\lambda^{(k)}$, their supports sets must be equal, and by Lemma~\ref{lem.left-coset-support} it follows that 
\[
 \bigsqcup_{k\leq j \leq \lambda_2} vv_j S_\lambda^{(j)} =  \bigsqcup_{k\leq j \leq \lambda_2} v_j S_\lambda^{(j)}
 \]
 or equivalently 
 \[
  \bigsqcup_{k\leq j \leq \lambda_2} v S_\lambda v_j =  \bigsqcup_{k\leq j \leq \lambda_2} S_\lambda v_j.
  \]
In particular this means that, for every $j$ with $k \leq j \leq \lambda_2$, we must have that $vv_j$ is contained in some coset $S_\lambda v_\ell$ for $\ell$ with $k \leq \ell \leq \lambda_2$. In particular, there exists some $\ell$ with $k\leq \ell \leq \lambda_2$ such that $vv_k \in S_\lambda v_\ell$. We consider the cases $\lambda_1>1$ and $\lambda_1=1$ separately.

Suppose $\lambda_1>1$.  Recall that $v_k$ has one-line notation as given in~\eqref{eq: vk}, and in particular that the last $(\lambda_1-1)$-many entries of the one-line notation for $v_k$ are given by the sequence $2, 3, \ldots, \lambda_1$, and similarly for $v_\ell$.  Thus $vv_k \in S_\lambda v_\ell$ for $k \leq \ell \leq \lambda_2$ implies that 
$\{v(2), v(3), \cdots, v(\lambda_1)\}$ is a subset of $\{1,2,\cdots, \lambda_1\}$. 
 Now recall that $v$ is a shortest left coset representative. From Remark~\ref{lem.coset-decomp} it follows that we may assume its first $\lambda_1$ entries are increasing, i.e., $v(1) < v(2) < \cdots < v(\lambda_1)$. If $1 \in \{v(2),\cdots, v(\lambda_1)\}$, then since the entries must be increasing we conclude $v(2)=1$, but then we come to a contradiction since there is no value of $v(1)$ which can be less than $v(2)=1$. Thus $1 \not \in \{v(2),\ldots, v(\lambda_1)\}$, but then $\{v(2), \ldots, v(\lambda_1)\} = \{2,\ldots, \lambda_1\}$ and we conclude that $v(2)=2, v(3)=3, \cdots, v(\lambda_1)=\lambda_1$. Now the condition that $v(2)=2$ and $v(1)<v(2)$ forces $v(1)=1$ also. Thus $v$ must be the identity (since it is a shortest left coset representative, and acts as the identity on $\{1,2,\cdots,\lambda_1\}$, so it must also act as the identity on $\{\lambda_1+1,\cdots,n\}$).

Now we suppose $\lambda_1=1$ and $k\geq 1$.  In this case, $v_\ell=u_\ell$ is the unique permutation with $1$ in position $\ell+1$ and all other entries in increasing order. In particular, since $y(1)=1$ for all $y\in S_\lambda$ we get that $vv_k \in S_\lambda v_\ell$ for $k \leq \ell \leq \lambda_2$ implies that $1=vv_k(\ell+1)\in \{v(1), v(k+2), \ldots, v(n)\}$. Now suppose $1$ lies in $\{v(k+2),\ldots,v(n)\}$. Since $v$ is a shortest left coset representative, we may assume $v(2) < v(3) < \cdots < v(n)$. In particular, if $v(1)\neq 1$ then we must have $v(2)=1$. This contradicts the assertion that $1\in \{v(k+2),\ldots,v(n)\}$, since $k \geq 1$. Hence the only possibility is that $v(1)=1$, which in turn implies that $v$ must be the identity by the same reasoning as above. This concludes the proof.
\end{proof}


\section{Linear independence for an $\Sn$-orbit: special cases}\label{subsec: LI for fk}

In the previous two sections, we gave a purely combinatorial algorithm that produces, in certain situations, classes $f^{(k)}_{\lambda} \in \bigoplus_{w \in \Sn} H^*_T(\pt)$ which satisfy the GKM conditions for a Hessenberg function $h$, and hence can be viewed as equivariant cohomology classes in $H^*_T(\Hess(\mathsf{S},h))$. Moreover, Lemma~\ref{lemma: stabilizer is Slambda} proves that the stabilizer of the class $f^{(k)}_{\lambda}$ under the dot action is $S_\lambda$. Thus we can view the results of Section~\ref{sec.top.coset} and Section~\ref{sec.class.defn} as a partial answer to the first problem posed at the end of Section~\ref{subsec: perm basis program}. 

The purpose of this section is to take the theory developed in Sections~\ref{sec.top.coset} and~\ref{sec.class.defn} one step further, by addressing the main question posed in Problem 2 at the end of Section~\ref{subsec: perm basis program}, namely: under what conditions is the $\Sn$-orbit of $f^{(k)}_{\lambda}$ linearly independent over $H^*_T(\pt)$? Note that, in the case of the ``top coset'' classes, the $\Sn$-orbit is indeed $H^*_T(\pt)$-linearly independent, 
as we have already recorded in Proposition~\ref{prop: top coset rep}. Therefore, in this section we focus on proving the linear independence statement -- in some special cases -- for the classes we constructed in Section~\ref{sec.class.defn} which have supports that are a union of \emph{more than one} right coset.

We begin by stating the main result of this section.  We need some notation to state one of the (technical) hypotheses. Let $v_{\lambda_2-1}^{-1}$ be the permutation defined as in~\eqref{eq: vk inverse}; for the reader's convenience, we record its one-line notation here as well:
\begin{eqnarray}\label{eqn.vlambda}
v_{\lambda_2-1}^{-1} = [\lambda_2, \lambda_2+2, \ldots, n, 1, 2, \ldots, \lambda_2-1, \lambda_2+1].
\end{eqnarray}
We note in particular that 
\begin{equation}\label{eq: prop of v lambda2-1}
v_{\lambda_2-1}^{-1}(b) = b-\lambda_1 \, \, \textup{ if } \, \, \lambda_1+1 \leq b \leq n-1, \, \, \textup{ and } \, \, 
v_{\lambda_2-1}^{-1}(a) = a+\lambda_2 \, \, \textup{ if } \, \, 2 \leq a \leq \lambda_1. 
\end{equation}
The above remarks will be useful in the arguments below.
We also define 
\begin{equation}\label{eq: def j0}
j_0 := \min \{ b \in \{ \lambda_1+1, \cdots, n-1\} \, \mid \, \lambda_2 \leq h(v_{\lambda_2-1}^{-1}(b)) \}. 
\end{equation} 
The index $j_0$ is used in our proof to describe the set $\S_{\lambda_2-1} = N(v_{\lambda_2-1}^{-1}) \cap v_{\lambda_2-1}(\Phi_h^-)$.
We can now state our theorem.

\begin{theorem}\label{theorem: orbit independence}
Let $n$ be a positive integer and $h: [n] \to [n]$ a Hessenberg function. Let $\lambda=(\lambda_1, \lambda_2) \vdash n$ be a composition of $n$ with two parts. Assume $h(1) < \lambda_2$. In addition, if $\lambda_1>1$, we also assume $h(\lambda_2+1)=n$ and $h(v_{\lambda_2-1}^{-1}(j_0)) \leq \lambda_2+1$. Then
\begin{enumerate} 
\item the $\Sn$-orbit of $f^{(\lambda_2-1)}_\lambda$ is $H^*_T(\pt)$-linearly independent, and
\item the stabilizer of each element in the $\Sn$-orbit of $f_\lambda^{(\lambda_2-1)}$ is a conjugate of the reflection subgroup $S_\lambda$. 
\end{enumerate} 
In particular, the $H_T^*(\pt)$-submodule of $H_T^*(\Hess(\mathsf{S},h))$ spanned by the $\Sn$-orbit of $f_\lambda^{(\lambda_2-1)}$ is an $\Sn$-subrepresentation with the same character as $\mathrm{Ind}_{S_\lambda}^{\Sn}(\mathbf{1}) \simeq M^{\Par(\lambda)}$ where $\Par(\lambda)$ is the partition of $n$ obtained from $\lambda$ by rearranging the parts to be in decreasing order.
\end{theorem}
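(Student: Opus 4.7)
Part (2) is immediate from Lemma~\ref{lemma: stabilizer is Slambda}. The hypothesis $h(1)<\lambda_2$ forces $\lambda_2\geq 2$, so $k=\lambda_2-1\geq 1$, and the lemma gives $\Stab(f)=S_\lambda$ for $f:=f_\lambda^{(\lambda_2-1)}$; the stabilizer of $v\cdot f$ is then the reflection subgroup $vS_\lambda v^{-1}$. Once part (1) is proved, the subrepresentation claim follows from the standard fact that a free permutation basis of the $H^*_T(\pt)$-span with point-stabilizer conjugate to $S_\lambda$ has the same character as $\mathrm{Ind}_{S_\lambda}^{\Sn}(\mathbf{1})\cong M^{\Par(\lambda)}$.

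For part (1), the first step is to pin down the support overlaps. A direct computation from~\eqref{eq: vk} shows $v_{\lambda_2}=v_{\lambda_2-1}s_{\lambda_2}$, whence the key identity $v_{\lambda_2}v_{\lambda_2-1}^{-1}=(1,n)$. Combined with Lemma~\ref{lem.left-coset-support}---which writes $\mathrm{supp}(v\cdot f)=vS_\lambda v_{\lambda_2-1}\sqcup vS_\lambda v_{\lambda_2}$---this yields the sharp criterion: for distinct $v,v'\in\Sn^\lambda$, $\mathrm{supp}(v\cdot f)\cap\mathrm{supp}(v'\cdot f)\neq\emptyset$ iff $v'^{-1}v\in S_\lambda(1,n)S_\lambda$. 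Next, using Lemma~\ref{lemma.vk-invs} and~\eqref{eq: vk inverse}, the hypotheses on $h$ pin down the structure of $\S_{\lambda_2-1}$: one checks that $t_1-t_b\in\S_{\lambda_2-1}$ iff $b\geq j_0$ (using $h(1)<\lambda_2$, which forces $j_0\geq\lambda_1+2$); that $t_a-t_n\in\S_{\lambda_2-1}$ for every $a\in\{2,\ldots,\lambda_1\}$ (using $h(\lambda_2+1)=n$); and that the constraint $h(v_{\lambda_2-1}^{-1}(j_0))\leq\lambda_2+1$ forbids $t_a-t_{j_0}\in\S_{\lambda_2-1}$ for $a\geq 2$.

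Now suppose $\sum_{v'}c_{v'}(v'\cdot f)=0$. For each $v\in\Sn^\lambda$ and each $y\in S_\lambda$, evaluating at the test point $vyv_{\lambda_2-1}$ yields
\[
c_v\prod_{\eta\in\S_{\lambda_2-1}}vy(\eta)\;+\;c_{v'_y}\prod_{\eta\in\S_{\lambda_2-1}}vy(1,n)(\eta)\;=\;0,
\]
where $v'_y$ is the unique shortest representative of the left coset $vy(1,n)S_\lambda$. As $y$ ranges over the subgroup $H:=S_\lambda\cap(1,n)S_\lambda(1,n)$ (of order $(\lambda_1-1)!(\lambda_2-1)!$), the coset $vy(1,n)S_\lambda$ is unchanged, so $v'_y$ equals a single element $\tilde v\neq v$; we thus obtain a family of relations indexed by $H$, all coupling only $c_v$ and $c_{\tilde v}$. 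The \textbf{main obstacle} is proving that the $2\times 2$ system built from $y=e$ and some nontrivial $y\in H$ has a nonzero determinant as a polynomial. The naive pairing (from $y=e$ alone) is degenerate: on each 2-cycle of the pairing $v\mapsto\tilde v$ one checks directly that the corresponding $2\times 2$ determinant vanishes identically (essentially because $D_{v,e}$ and $E_{\tilde v,e}$ are equal up to permutation of the factors). The hypothesis $h(1)<\lambda_2$ forces $j_0\geq\lambda_1+2$, so $H$ contains permutations of $\{\lambda_1+1,\ldots,n-1\}$ that do not preserve the subset $\{j_0,\ldots,n-1\}$; for such $y$ the second paragraph's description of $\S_{\lambda_2-1}$ implies $y(\S_{\lambda_2-1})\neq\S_{\lambda_2-1}$, breaking the degeneracy of the $2\times 2$ determinant and forcing $c_v=c_{\tilde v}=0$. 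Backward substitution along the directed trees feeding into the cycles of the functional graph $v\mapsto\tilde v$ then yields $c_v=0$ for every $v\in\Sn^\lambda$, completing the proof.
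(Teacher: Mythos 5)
Your overall strategy coincides with the paper's: identify the pairwise support overlaps (your double-coset criterion $v'^{-1}v\in S_\lambda(1,n)S_\lambda$ is equivalent to the condition $|I\cap J|=\lambda_1-1$ of Proposition~\ref{prop.support}), evaluate the putative relation at two test points lying in a common overlap, and show the resulting $2\times 2$ system over $H_T^*(\pt)$ is nondegenerate; the element $s_{j_0-1}$, which lies in your subgroup $H=S_\lambda\cap(1,n)S_\lambda(1,n)$ and fails to preserve $\{j_0,\dots,n-1\}$, is exactly the choice made in the paper. The only structural difference is that you run the $2\times 2$ argument at every $v\in\Sn^\lambda$, whereas the paper does it once (for $J=\{1,\dots,\lambda_1\}$ and $K=\{2,\dots,\lambda_1,n\}$) and then propagates $c_I=0$ through the connected ``change one element of the subset'' graph using Proposition~\ref{prop.support}; either propagation is fine, and your ``directed trees'' language is unnecessary once each pair is handled by its own nondegenerate system.

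The gap is at the crux. You justify the nonvanishing of the $2\times 2$ determinant by asserting that $y(\S_{\lambda_2-1})\neq\S_{\lambda_2-1}$ ``breaks the degeneracy.'' That implication does not hold in general: writing $P=\prod_{\eta\in\S_{\lambda_2-1}}\eta$ and $\sigma=(1,n)$, the determinant is (up to applying the ring automorphism $v$) $P\cdot y\sigma(P)-\sigma(P)\cdot y(P)$, which vanishes precisely when the multisets of linear forms $\S_{\lambda_2-1}\sqcup y\sigma(\S_{\lambda_2-1})$ and $\sigma(\S_{\lambda_2-1})\sqcup y(\S_{\lambda_2-1})$ agree up to signs --- a condition that can hold even when $y(\S_{\lambda_2-1})\neq\S_{\lambda_2-1}$ (for instance, it holds for \emph{every} $y$ whenever $\sigma(\S_{\lambda_2-1})=\S_{\lambda_2-1}$). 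What actually forces the determinant to be nonzero --- and where the hypothesis $h(v_{\lambda_2-1}^{-1}(j_0))\leq\lambda_2+1$ is consumed --- is the decomposition $\S_{\lambda_2-1}=\mathcal{A}_1\sqcup\mathcal{A}_2$ in which $s_{j_0-1}$ fixes every root of $\mathcal{A}_1$ (because each $t_a-t_b\in\mathcal{A}_1$ has $b>j_0$), so that the determinant factors as a common nonzero product times
\[
(t_1-t_{j_0})(t_n-t_{j_0-1})-(t_n-t_{j_0})(t_1-t_{j_0-1})=(t_{j_0-1}-t_{j_0})(t_n-t_1)\neq 0.
\]
You correctly extract the relevant consequence of the hypothesis (``no $t_a-t_{j_0}$ with $a\geq 2$ lies in $\S_{\lambda_2-1}$'') but never feed it into the determinant, so the decisive computation is missing from your argument.
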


Note that since we are taking $k=\lambda_2-1$ in the above theorem (with respect to the construction of the $f^{(k)}_{\lambda}$ in the previous section), we have $k+2=\lambda_2+1$, so the assumption $h(\lambda_2+1)=n$ in Theorem~\ref{theorem: orbit independence} is equivalent to the necessary hypothesis $h(k+2=\lambda_2+1)=n$ in the statement of Theorem~\ref{thm.class}. Hence, under the hypotheses of Theorem~\ref{theorem: orbit independence}, we do know from Theorem~\ref{thm.class} that the classes $f^{(\lambda_2-1)}_{\lambda}$ are well-defined in $H^*_T(\Hess(\mathsf{S},h))$.

We also note that the claim regarding the stabilizers of the elements in the $\Sn$-orbit is a straightforward consequence of the construction of the $f^{(k)}_\lambda$ and Lemma~\ref{lemma: stabilizer is Slambda} (see also Proposition~\ref{prop: top coset rep}), so the main task at hand is to prove the $H^*_T(\pt)$-linear independence, and this is what occupies the bulk of this section. More specifically, we begin with the following.

We introduce some notation. Since $\lambda = (\lambda_1, \lambda_2)$ is a two-part composition, 
shortest left coset representatives in $\Sn^\lambda$ are parameterized by subsets of $n$ of cardinality $\lambda_1$. Indeed, given a subset $J = \{j_1 < j_2 < \cdots < j_{\lambda_1}\} \subseteq [n]$, the corresponding shortest left coset representative is the permutation defined as 
\begin{eqnarray} \label{eqn.one-line-vJ}
v_J := [j_1, j_2, \ldots, j_{\lambda_1},  j_1', \ldots, j_{\lambda_2}'] \in \Sn^\lambda
\end{eqnarray}
where $[n]\setminus J = \{j_1'< \cdots< j_{\lambda_2}'\}$ and it is straightforward to see that all shortest left coset representatives arise in this way. Moreover, 
given a permutation $w$ we obtain the one line notation for the shortest left coset representative of $w$ in $\Sn^\lambda$ by rearranging the values in positions $1,2,\ldots, \lambda_1$ and those in $\lambda_1+1,\ldots, n$ so that they are in increasing order.
With this notation in place we can write 
\[
\Sn^\lambda = \{v_J \mid  J \subseteq [n], \lvert J \rvert = \lambda_1\}.
\]
Note that since $(^\lambda\Sn)_0 \subseteq {^\lambda\Sn} = (\Sn^\lambda)^{-1}$ for all $k$ with $0\leq k \leq \lambda_2$,  we have
\[
v_k^{-1} = v_{\{ k+1, \lambda_2+2, \ldots, n \}}
\]
where $v_k$ is the permutation~\eqref{eq: vk} considered in Section~\ref{sec.class.defn}.  
Lemma~\ref{lemma: fk def}(2) shows that $y \cdot f^{(k)}_{\lambda} = f^{(k)}_\lambda$ for any $y \in S_{\lambda}$. This implies that the $\Sn$-orbit of $f^{(k)}_\lambda$ under the dot action is 
\begin{equation}\label{eq: orbit of fk}
\{ v_J \cdot f^{(k)}_\lambda  \mid  v_J \in {\Sn^\lambda} \}.
\end{equation}

Our linear independence argument requires the following statement.

\begin{proposition}\label{prop.support} Suppose $I,J, K\subseteq [n]$ are subsets with cardinality $\lambda_1$ and let $k=\lambda_2-1$.  Then 
\begin{enumerate}
\item $\mathrm{supp}(v_J\cdot f_\lambda^{(k)}) \cap \mathrm{supp}(v_I \cdot f_\lambda^{(k)})\neq \emptyset$ if and only if $I=J$ or $|J\cap I| = \lambda_1-1$, and
\item if $I,J,K$ are pairwise distinct subsets of $[n]$, then $\mathrm{supp}(v_J\cdot f_\lambda^{(k)}) \cap \mathrm{supp}(v_I \cdot f_\lambda^{(k)}) \cap \mathrm{supp}(v_K\cdot f_\lambda^{(k)}) = \emptyset$.
\end{enumerate}
\end{proposition}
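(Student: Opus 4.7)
The plan is to give an explicit combinatorial description of $\mathrm{supp}(v_J \cdot f_\lambda^{(\lambda_2-1)})$ in terms of where the values in $J$ appear in the one-line notation of elements $w \in \Sn$, and then analyze intersections set-theoretically. Throughout, write $k = \lambda_2 - 1$.

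By Lemma~\ref{lem.left-coset-support}, together with the identity $v_j S_\lambda^{(j)} = S_\lambda v_j$, the support of $v_J \cdot f_\lambda^{(k)}$ is the disjoint union $(v_J S_\lambda v_{\lambda_2-1}) \sqcup (v_J S_\lambda v_{\lambda_2})$. For any index $j$, the condition $w \in v_J S_\lambda v_j$ is equivalent to $v_J^{-1} w \in S_\lambda v_j$, which, using the characterization of right $S_\lambda$-cosets by the position-set of the values $\{1, \ldots, \lambda_1\}$, amounts to $\{w^{-1}(a) : a \in J\} = \{v_j^{-1}(a) : a \in [\lambda_1]\}$. Applying the one-line notations of $v_{\lambda_2}^{-1}$ and $v_{\lambda_2-1}^{-1}$ from~\eqref{eq: vk inverse}, the right-hand side equals $\{\lambda_2+1, \ldots, n\}$ and $\{\lambda_2\} \cup \{\lambda_2+2, \ldots, n\}$, respectively. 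Defining
\begin{align*}
A_J & := \{w \in \Sn : \{w(\lambda_2+1), w(\lambda_2+2), \ldots, w(n)\} = J\}, \\
B_J & := \{w \in \Sn : \{w(\lambda_2), w(\lambda_2+2), \ldots, w(n)\} = J\},
\end{align*}
this identifies $v_J S_\lambda v_{\lambda_2} = A_J$ and $v_J S_\lambda v_{\lambda_2-1} = B_J$, so $\mathrm{supp}(v_J \cdot f_\lambda^{(k)}) = A_J \sqcup B_J$. This reformulation is the main technical step; after it, the remaining analysis is elementary.

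For part~(1), I decompose the intersection as $(A_J \cap A_I) \cup (B_J \cap B_I) \cup (A_J \cap B_I) \cup (B_J \cap A_I)$. Each of $A_L$, $B_L$ determines $L$ uniquely from any of its members (as the set of values on the appropriate position set), so $A_J \cap A_I$ and $B_J \cap B_I$ are non-empty exactly when $I = J$, while $A_J \cap B_J = \emptyset$ because the two defining position sets have union of size $\lambda_1 + 1 > \lambda_1 = |J|$. For $A_J \cap B_I$ with $I \neq J$, the values at positions $\lambda_2+2, \ldots, n$ must lie in $J \cap I$, while $w(\lambda_2+1) \in J \setminus I$ and $w(\lambda_2) \in I \setminus J$; hence $|J \cap I| = \lambda_1 - 1$. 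The converse is immediate by explicit construction: given $|J \cap I| = \lambda_1 - 1$, place the single element of $J \setminus I$ at position $\lambda_2+1$, the single element of $I \setminus J$ at position $\lambda_2$, the $\lambda_1 - 1$ elements of $J \cap I$ at positions $\lambda_2+2, \ldots, n$, and any arrangement of $[n] \setminus (J \cup I)$ in the remaining positions. The analysis of $B_J \cap A_I$ is symmetric.

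For part~(2), observe that if $w \in A_L$ then $L = \{w(\lambda_2+1), \ldots, w(n)\}$ is uniquely recovered from $w$, and similarly if $w \in B_L$ then $L = \{w(\lambda_2), w(\lambda_2+2), \ldots, w(n)\}$. Thus for any fixed $w$, the collection $\{L \subseteq [n] : w \in A_L \cup B_L\}$ has cardinality at most two, which immediately rules out three pairwise distinct subsets all containing $w$ in their support. The principal obstacle throughout is the combinatorial reformulation of the two left cosets as the sets $A_J$ and $B_J$; once that is in place, both parts reduce to routine set-theoretic manipulations.
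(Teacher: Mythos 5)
Your proof is correct, and it takes a somewhat different route from the paper's for the key combinatorial step. Both arguments start from Lemma~\ref{lem.left-coset-support}, which expresses $\mathrm{supp}(v_J\cdot f_\lambda^{(\lambda_2-1)})$ as the disjoint union of the two cosets $v_J S_\lambda v_{\lambda_2-1}$ and $v_J S_\lambda v_{\lambda_2}$. From there the paper stays inside coset arithmetic: it expands the pairwise intersection into four coset intersections, kills the two ``same-index'' ones via Lemma~\ref{lemma.right-to-left}, and analyzes the cross terms using the identity $v_{\lambda_2-1}v_{\lambda_2}^{-1}=s_\theta$ with $\theta=t_1-t_n$, concluding that $v_JS_\lambda v_{\lambda_2}\cap v_IS_\lambda v_{\lambda_2-1}\neq\emptyset$ exactly when the shortest left coset representative of $v_Is_{y(\theta)}$ equals $v_J$ for some $y\in S_\lambda$, i.e.\ when $J$ is obtained from $I$ by changing one element. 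You instead convert each coset into an explicit membership test on one-line notation --- $w\in A_J$ iff $\{w(\lambda_2+1),\ldots,w(n)\}=J$ and $w\in B_J$ iff $\{w(\lambda_2),w(\lambda_2+2),\ldots,w(n)\}=J$, which is the correct translation of the right-coset criterion ``positions of the values $1,\ldots,\lambda_1$'' combined with~\eqref{eq: vk inverse} --- and then everything reduces to counting. Your version buys a more transparent part~(2): since a fixed $w$ determines at most two candidate sets $L$ (its $A$-type value set and its $B$-type value set), three pairwise distinct subsets cannot share a support point, whereas the paper has to argue coset-by-coset. The paper's version has the mild advantage of not requiring the explicit one-line formulas for $v_{\lambda_2-1}^{-1}$ and $v_{\lambda_2}^{-1}$ beyond the single relation $v_{\lambda_2-1}v_{\lambda_2}^{-1}=s_\theta$, which it reuses later in the proof of Theorem~\ref{theorem: orbit independence}. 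Both proofs are complete; no gaps in yours.
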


\begin{proof} 

We begin by proving statement (1). First, it is clear that if $I=J$ then 
$$\mathrm{supp}(v_J\cdot f_\lambda^{(k)}) \cap \mathrm{supp}(v_I \cdot f_\lambda^{(k)})\neq \emptyset.
$$
 Thus to prove the statement it suffices to show that, in the case that $I \neq J$, the condition $\mathrm{supp}(v_J\cdot f_\lambda^{(k)}) \cap \mathrm{supp}(v_I \cdot f_\lambda^{(k)})\neq \emptyset$ is equivalent to $\lvert J \cap I \rvert = \lambda_1 - 1$. So now suppose $I \neq J$. 
Recall that $S_\lambda^{(j)} := v_j^{-1} S_\lambda v_j$ for any $0\leq j \leq \lambda_2$, so 
$v_j S_{\lambda}^{(j)} = S_\lambda v_j$.  By Lemma~\ref{lem.left-coset-support} and using the fact that $k=\lambda_2-1$ and $k+1 = \lambda_2$ (so $f^{(k)}_{\lambda}$ has support consisting of exactly \emph{two} cosets), we have
\begin{eqnarray}\label{eqn.support1}
\mathrm{supp}(v_J \cdot f_\lambda^{(k)}) \cap \mathrm{supp}(v_I \cdot f_\lambda^{(k)}) &=& \left(v_J v_k S_\lambda^{(k)} \sqcup v_Jv_{k+1}S_\lambda^{(k+1)}\right) \cap \left(v_Iv_k S_\lambda^{(k)} \sqcup v_Iv_{k+1}S_\lambda^{(k+1)}\right).
\end{eqnarray} 
Since $I \neq J$,
$v_I$ and $v_J$ are distinct shortest-left-coset representatives of $S_\lambda$, from which it follows 
 that $v_J v_k S_\lambda^{(k)} \cap v_I v_k S_\lambda^{(k)}  = \emptyset$ and similarly $v_J v_{k+1} S_\lambda^{(k+1)} \cap v_I v_{k+1} S_\lambda^{(k+1)} = \emptyset$ (see Lemma~\ref{lemma.right-to-left}). Hence we can continue the computation started in~\eqref{eqn.support1} to obtain 
\begin{eqnarray}\label{eqn.support1b}
\mathrm{supp}(v_J \cdot f_\lambda^{(k)}) \cap \mathrm{supp}(v_I \cdot f_\lambda^{(k)}) &=& \left(v_J v_{k+1}S_\lambda^{(k+1)} \cap v_{I} v_k S_\lambda^{(k)}\right) \sqcup \left(v_Jv_k S_\lambda^{(k)} \cap v_I v_{k+1}S_\lambda^{(k+1)}\right) \\ \nonumber
&=& \left(v_J S_\lambda v_{k+1}\cap v_I S_\lambda v_{k}\right) \sqcup \left(v_J S_\lambda v_{k} \cap v_I S_\lambda v_{k+1}\right).
\end{eqnarray}
This proves that, in the case $I\neq J$, the intersection of the two support sets is nonempty if and only if 
\[
v_JS_\lambda v_{k+1}\cap v_I S_\lambda v_{k} \neq \emptyset \;\; \textup{ or } \;\;  v_JS_\lambda v_{k} \cap v_I S_\lambda v_{k+1}\neq \emptyset.
\]
To complete the proof of statement (1), it now suffices to argue that each of these conditions is equivalent to the condition that $|J\cap I| = \lambda_1-1$.  First, we have
\begin{eqnarray*}
v_J S_\lambda v_{k+1}\cap v_I S_\lambda v_{k} \neq \emptyset &\Leftrightarrow& v_J y_1v_{k+1} = v_Iyv_k \;\;\textup{ for some } \;\; y,y_1\in S_\lambda\\
&\Leftrightarrow& v_Jy_1 = v_Is_{y(\theta)}y \;\; \textup{ for some } \;\; y,y_1\in S_\lambda\; \textup{ and }\; \theta = t_1-t_n
\end{eqnarray*}
where the second equivalence follows from the fact that $v_kv_{k+1}^{-1} = s_\theta$ since $k=\lambda_2-1$, as can be readily checked by computation. We conclude that $v_J S_\lambda v_{k+1}\cap v_I S_\lambda v_{k} \neq \emptyset$ if and only if there exists $y\in S_\lambda$ such that the shortest left coset representative of $v_I s_{y(\theta)}$ in $\Sn^\lambda$ is $v_J$.

The one line notation for $v_I s_{y(\theta)}$ is obtained from the one line notation of $v_I$ by exchanging the values in positions $y(1)$ and $y(n)$.  Since $y\in S_\lambda$ we know $y(1)\in \{ 1, \ldots, \lambda_1 \}$ and $y(n)\in \{\lambda_1+1,\ldots, n\}$.  In particular, the description of the one line notation for $v_J$ and $v_I$ given in~\eqref{eqn.one-line-vJ} implies that the desired condition holds if and only if we can obtain $J$ from $I$ by changing a single element, or more precisely, if and only if $|J\cap I| = \lambda_1-1$.  This proves the desired result in this case.

Next, consider the condition that $v_JS_\lambda v_{k} \cap v_I S_\lambda v_{k+1}\neq \emptyset$. By the same logic as above, this intersection is nonempty if and only if there exists $y\in S_\lambda$ such that the shortest left coset representative of $v_I s_{y(\theta)}$ is $v_J$ for some $y\in S_\lambda$. By the same reasoning as in the paragraph above we obtain $|J\cap I| = \lambda_1-1$. This proves statement (1).

We now prove statement (2). Suppose $I, J, K$ are pairwise distinct. Using the same reasoning as above the intersection of the three support sets is
\begin{equation}\label{eq: three intersection}
\left[\left(v_J v_{k+1}S_\lambda^{(k+1)} \cap v_{I} v_k S_\lambda^{(k)}\right) \sqcup \left(v_Jv_k S_\lambda^{(k)} \cap v_I v_{k+1}S_\lambda^{(k+1)}\right)\right] \cap \left(v_K v_kS_{\lambda}^{(k)} \sqcup v_K v_{k+1}S_\lambda^{(k+1)}\right).
\end{equation}
As before, since $J \neq K$ and $I \neq K$ we know that $v_Jv_{k+1}S_\lambda^{(k+1)}\cap v_K v_{k+1}S_\lambda^{(k+1)} = \emptyset$ and $v_Iv_{k}S_\lambda^{(k)}\cap v_K v_{k}S_\lambda^{(k)} = \emptyset$. In particular we obtain
\[
\left(v_J v_{k+1}S_\lambda^{(k+1)} \cap v_{I} v_k S_\lambda^{(k)}\right) \cap \left(v_K v_kS_{\lambda}^{(k)} \sqcup v_K v_{k+1}S_\lambda^{(k+1)}\right)=\emptyset.
\]
Similarly we obtain 
\[
\left(v_Jv_k S_\lambda^{(k)} \cap v_I v_{k+1}S_\lambda^{(k+1)}\right) \cap \left(v_K v_kS_{\lambda}^{(k)} \sqcup v_K v_{k+1}S_\lambda^{(k+1)}\right)=\emptyset.
\]
Hence the set in~\eqref{eq: three intersection} is empty, i.e. 
\[
\left[\left(v_J v_{k+1}S_\lambda^{(k+1)} \cap v_{I} v_k S_\lambda^{(k)}\right) \sqcup \left(v_Jv_k S_\lambda^{(k)} \cap v_I v_{k+1}S_\lambda^{(k+1)}\right)\right] \cap \left(v_K v_kS_{\lambda}^{(k)} \sqcup v_K v_{k+1}S_\lambda^{(k+1)}\right) = \emptyset
\]
as desired. This proves statement (2).
\end{proof}

We are now ready to prove Theorem~\ref{theorem: orbit independence}.

\begin{proof}[Proof of Theorem~\ref{theorem: orbit independence}]

First, Theorem~\ref{thm.class} implies that the class $f^{(\lambda_2-1)}_{\lambda}$ is indeed a well-defined GKM class under the hypotheses of Theorem~\ref{theorem: orbit independence}. Now 
we want to show that the set~\eqref{eq: orbit of fk} is $H^*_T(\pt)$-linearly independent.
Suppose there is a $H^*_T(\pt)$-linear combination of $\{v_J \cdot f_\lambda^{(\lambda_2-1)} \mid v_J \in \Sn^\lambda\}$ that gives the zero class, i.e., 
\begin{equation}\label{eq: zero linear comb}
\sum_J c_J \, v_J\cdot f^{(\lambda_2-1)}_{\lambda} = 0 \in H^*_T(\Hess(\mathsf{S},h)) \subseteq \bigoplus_{w \in \Sn} H^*_T(\pt)
\end{equation}
for some $c_J \in H^*_T(\pt)$. 
We must show that $c_J = 0 \in H^*_T(\pt)$ for all $J \subset [n]$ with $\lvert J \rvert = \lambda_1$. Since~\eqref{eq: zero linear comb} holds as an equality of GKM classes, then 
in particular the LHS must evaluate to $0$ at any permutation $w \in \Sn$.

By Proposition~\ref{prop.support}, only two elements in the set~\eqref{eq: orbit of fk} can be nonzero when evaluated at any given $w \in \Sn$.  Consider, in particular, the evaluation at $v_{\lambda_2}$ of the LHS of~\eqref{eq: zero linear comb}. Let $K=\{2,3,\ldots, \lambda_1, n\}$. 
We now show that $v_{\lambda_2} \in \mathrm{supp}(f_{\lambda}^{(k)}) \cap \mathrm{supp}(v_K \cdot f_\lambda^{(k)})$. To see this, 
 we apply equation~\eqref{eqn.support1b} to $J = \{1,2,\ldots, \lambda_1\}$ and $I = K = \{2,3,\ldots, \lambda_1, n\}$ (so $v_J = e$ is the identity permutation) to obtain
\begin{equation}\label{eq:support2}
\mathrm{supp}(f_\lambda^{(k)}) \cap \mathrm{supp}(v_K\cdot f_\lambda^{(k)}) = (S_\lambda v_{k+1} \cap v_K S_\lambda v_k) \sqcup (S_\lambda v_k \cap v_K S_\lambda v_{k+1}).
\end{equation}
Recall that 
$v_{\lambda_2-1} v_{\lambda_2}^{-1} = s_\theta$ as in the proof of Proposition~\ref{prop.support} 
where $\theta = t_1-t_n$ so $v_{\lambda_2} v_{\lambda_2-1}^{-1} = s_\theta$ also (since $s_\theta^{-1} = s_\theta$). Also, from the definition of $v_K$ in~\eqref{eqn.one-line-vJ} it is not hard to see that $v_K s_\theta \in S_\lambda$.  We then have 
\begin{equation}\label{eq: vKinv stheta}
v_K^{-1} s_\theta \in S_\lambda \Rightarrow v_K^{-1}v_{\lambda_2}v_{\lambda_2-1}^{-1} \in S_\lambda \Rightarrow v_{\lambda_2} = v_K y v_{\lambda_2-1} \;\; \textup{ for some }\;\; y\in S_\lambda
\end{equation}
so $v_{\lambda_2} \in v_K S_\lambda v_k = v_K S_\lambda v_{\lambda_2-1}$. Since $v_{\lambda_2} \in S_\lambda v_{k+1} = S_\lambda v_{\lambda_2}$ also, we see that $v_{\lambda_2} \in S_\lambda v_{k+1} \cap v_K S_\lambda v_k$ so by~\eqref{eq:support2} we conclude $v_{\lambda_2} \in \mathrm{supp}(f_\lambda^{(k)}) \cap \mathrm{supp}(v_K\cdot f_\lambda^{(k)})$.

The discussion above implies that when we evaluate the LHS of~\eqref{eq: zero linear comb} at $w = v_{\lambda_2}$ we obtain 
\begin{equation}\label{eq: eval at v lambda 2}
\begin{split} 
c_{J} f^{(\lambda_2-1)}_{\lambda}(v_{\lambda_2}) + c_K \,\left( v_K \cdot f^{(\lambda_2-1)}_{\lambda}\right)(v_{\lambda_2}) 
& = c_{J} f^{(\lambda_2-1)}_{\lambda}(v_{\lambda_2}) + 
c_K \, v_K\left(f^{(\lambda_2-1)}_{\lambda}(v_K^{-1} v_{\lambda_2} )\right) \\
& = c_{J} f^{(\lambda_2-1)}_{\lambda}(v_{\lambda_2}) + 
c_K \, v_K\left(f^{(\lambda_2-1)}_{\lambda}(v_K^{-1} v_{\lambda_2}v_{\lambda_2-1}^{-1}v_{\lambda_2-1} )\right) \\
& = c_{J} \prod_{\beta \in \mathcal{S}_{\lambda_2-1}} \beta + 
c_K \, v_K \left( \prod_{\beta \in \mathcal{S}_{\lambda_2-1}} v_K^{-1} v_{\lambda_2} v_{\lambda_2-1}^{-1} (\beta) \right)   \\
& = c_{J} \prod_{\beta \in \mathcal{S}_{\lambda_2-1}} \beta + 
c_K  \prod_{\beta \in \mathcal{S}_{\lambda_2-1}} s_\theta(\beta) \\
\end{split} 
\end{equation} 
where in the third equality we have used that $v_K^{-1} v_{\lambda_2} v_{\lambda_2-1}^{-1}\in S_{\lambda}$ as we saw in~\eqref{eq: vKinv stheta} and in the last equality we have used that $v_{\lambda_2} v_{\lambda_2-1}^{-1} = s_{\theta}$. 
Since we have the equality~\eqref{eq: zero linear comb} we conclude that 
\[
c_{J} \prod_{\beta \in \mathcal{S}_{\lambda_2-1}} \beta + 
c_K  \prod_{\beta \in \mathcal{S}_{\lambda_2-1}} s_\theta(\beta)  = 0.
\]

The above analysis gives us one linear equation relating two of the coefficients appearing in the LHS of~\eqref{eq: zero linear comb}.
We need at least one more equation to be able to conclude that $c_{J}$ and $c_K$ are both equal to $0$. To do this, we need to evaluate~\eqref{eq: eval at v lambda 2} at another permutation in $\mathrm{supp}(f_\lambda^{(k)}) \cap \mathrm{supp}(v_K\cdot f_\lambda^{(k)})$. To find such a permutation, 
it will be useful to set some notation. We define 
\[
\mathcal{A}_1 :=  \{t_a - t_b \, \mid \, (a,b) \in \{2,3,\cdots, \lambda_1\} \times \{\lambda_1+1, \cdots, n\}, \, v_{\lambda_2-1}^{-1}(a) \leq h(v_{\lambda_2-1}^{-1}(b)) \}
\]
and 
\[
\mathcal{A}_2 := \{t_1-t_b \, \mid \, b \in \{\lambda_1+1, \cdots, n-1\}, \lambda_2 \leq h(v_{\lambda_2-1}^{-1}(b)) \}
\]
Note that $\mathcal{A}_1 = \emptyset$ if $\lambda_1 = 1$.
It follows from the definition of $\mathcal{S}_{\lambda_2-1}$, Lemma~\ref{lemma.vk-invs}, and properties of $v_{\lambda_2-1}$ that 
\begin{equation}\label{eq: S lambda2-1}
\mathcal{S}_{\lambda_2-1} = \mathcal{A}_1 \sqcup \mathcal{A}_2. 
\end{equation} 
Recall that
\[
j_0 := \min \{ b \in \{ \lambda_1+1, \cdots, n-1\} \, \mid \, \lambda_2 \leq h(v_{\lambda_2-1}^{-1}(b)) \}.
\]
From the one-line notation of $v_{\lambda_2-1}^{-1}$ in~\eqref{eqn.vlambda} it follows that $v_{\lambda_2-1}^{-1}(b) = b -\lambda_1$, and together with the fact that Hessenberg functions are non-decreasing, this implies
\begin{equation}\label{eq: A2}
\mathcal{A}_2= \{ t_1-t_{j_0}, t_1-t_{j_0+1},\ldots, t_1-t_{n-1} \}.
\end{equation} 
Since $h(1) < \lambda_2$ by assumption, we have $h(v_{\lambda_2-1}^{-1}(\lambda_1+1)) = h(1)<\lambda_2$ so we conclude that $\lambda_1+1 < j_0$. Consider the simple reflection $s_{j_0-1}$ exchanging $j_0-1$ and $j_0$.  Since
$\lambda_1+1 \leq j_0-1 \leq n-2$ we have $s_{j_0-1} \in S_\lambda$ and $s_{j_0-1}(1)=1$ and $s_{j_0-1}(n)=n$. 

Recall that one of the hypotheses of Theorem~\ref{theorem: orbit independence} is that $h(v_{\lambda_2-1}^{-1}(j_0)) \leq \lambda_2+1$. Using this, we conclude that, in the case when $\lambda_1 >1$, we have 
\begin{equation}\label{eq: ta - tb in A1}
\begin{split} 
t_a - t_b \in \mathcal{A}_1 & \Rightarrow v_{\lambda_2-1}^{-1}(a) \in \{\lambda_2+2, \cdots,n\} \,\,\textup{ by~\eqref{eqn.vlambda} } \\
 & \Rightarrow h(v_{\lambda_2-1}^{-1}(j_0)) \leq \lambda_2+1 <v_{\lambda_2-1}^{-1}(a)  \\
 & \Rightarrow h(v_{\lambda_2-1}^{-1}(j_0)) < h(v_{\lambda_2-1}^{-1}(b)) \, \, \textup{ by definition of $\mathcal{A}_1$} \\ 
 & \Rightarrow v_{\lambda_2-1}^{-1}(b) > v_{\lambda_2-1}^{-1}(j_0) \\
  & \Rightarrow b > j_0 \textup{ since } v_{\lambda_2-1}^{-1} \textup{ is increasing on } \{\lambda_1+1, \cdots, n\}, \textup{ and } b, j_0 \in \{\lambda_1+1, \cdots, n\} \\
 & \Rightarrow s_{j_0-1}(t_a - t_b) = t_a - t_b \, \,  \textup{since $j_0 < b$ and } a \leq \lambda_1 < j_0-1. 
\end{split} 
\end{equation}
Since~\eqref{eq: ta - tb in A1} holds for any $\beta = t_a - t_b \in \mathcal{A}_1$ we conclude $s_{j_0-1}(\beta) = \beta$ for all $\beta \in \mathcal{A}_1$. 
(In the case $\lambda_1 = 1$, the set $\mathcal{A}_1$ is empty so this statement is vacuously true.) 
From~\eqref{eq: A2} and~\eqref{eq: S lambda2-1} we then 
see immediately that 
\begin{equation} \label{eqn.formula}
\begin{split} 
f^{(\lambda_2-1)}_\lambda(v_{\lambda_2}) & = (t_1 - t_{j_0})(t_1 - t_{j_0+1}) \cdots (t_1 - t_{n-1})  \prod_{\beta \in \mathcal{A}_1} \beta \\
f^{(\lambda_2-1)}_\lambda(s_{j_0-1} v_{\lambda_2}) & = s_{j_0-1}(f^{(\lambda_2-1)}(v_{\lambda_2})) = 
(t_1 - t_{j_0-1}) (t_1 - t_{j_0+1}) \cdots (t_1 - t_{n-1})  \prod_{\beta \in \mathcal{A}_1} \beta \\
\end{split} 
\end{equation}
since $s_{j_0-1} \in S_\lambda$ and $s_{j_0-1}$ acts non-trivially only on the first factor of $f^{(\lambda_2-1)}(v_{\lambda_2})$. (Here we interpret the product over $\mathcal{A}_1$ to be equal to $1$ if $\lambda_1 = 1$ and $\mathcal{A}_1 = \emptyset$.) Note that the above equation implies $s_{j_0-1}v_{\lambda_2}\in \mathrm{supp}(f_\lambda^{(\lambda_2-1)})$.

In~\eqref{eq: eval at v lambda 2} we computed $v_K \cdot f^{(\lambda_2-1)} (v_{\lambda_2})$ and obtained 
\begin{eqnarray}\label{eqn.vKformula}
v_K \cdot f^{(\lambda_2-1)} (v_{\lambda_2}) =  \prod_{\beta \in \mathcal{S}_{\lambda_2-1}} s_{\theta}(\beta) 
 = (t_n - t_{j_0})(t_n- t_{j_0+1}) \cdots (t_n - t_{n-1}) \prod_{\beta \in \mathcal{A}_1} s_{\theta}(\beta).
\end{eqnarray}
 We can compute 
\begin{equation} \label{eqn.vKformula2}
\begin{split} 
v_K \cdot f^{(\lambda_2-1)}_\lambda (s_{j_0-1} v_{\lambda_2}) & = v_K\left(f^{(\lambda_2-1)}(v_K^{-1} s_{j_0-1} v_{\lambda_2})\right)  \\
 & = v_K \left( f^{(\lambda_2-1)}_\lambda(v_K^{-1} s_{j_0-1} s_{\theta} v_{\lambda_2-1})\right) 
 \, \, \textup{ since $s_\theta = v_{\lambda_2} v_{\lambda_2-1}^{-1}$}  \\
 & = v_K \left( f^{(\lambda_2-1)}_\lambda(v_K^{-1} s_{\theta} s_{j_0-1} v_{\lambda_2-1}) \right)  \textup{ because $s_{\theta}$ and $s_{j_0-1}$ commute } \\ 
 & = s_{\theta} s_{j_0-1} \left( f^{(\lambda_2-1)}_\lambda(v_{\lambda_2-1})\right) \textup{ because $v_K^{-1} s_{\theta} \in S_\lambda$ and $s_{j_0-1}\in S_\lambda$ } \\
 & = s_{\theta}  \left( (t_1 - t_{j_0-1}) (t_1 - t_{j_0+1}) \cdots (t_1 - t_{n-1}) \prod_{\beta \in \mathcal{A}_1} \beta \right) \\
 & \phantom{move over} \textup{ by definition of $f_\lambda^{(\lambda_2-1)}$ and~\eqref{eqn.formula} } \\ 
 & = (t_n - t_{j_0-1})(t_n- t_{j_0+1}) \cdots (t_n - t_{n-1}) \prod_{\beta \in \mathcal{A}_1} s_{\theta}(\beta). \\ 
\end{split} 
\end{equation}
In particular, our computations imply $s_{j_0-1}v_{\lambda_2} \in \mathrm{supp}(f_\lambda^{\lambda_2-1}) \cap \mathrm{supp}(v_K\cdot f_\lambda^{(\lambda_2)})$.

Evaluating~\eqref{eq: zero linear comb} at $v_{\lambda_2}$ and $s_{j_0-1}v_{\lambda_2}$ we obtain equations 
\begin{equation}\label{eq: star 1} 
c_J\, f_\lambda^{(\lambda_2-1)}(v_{\lambda_2}) + c_K \,v_K \cdot f_\lambda^{(\lambda_2-1)}(v_{\lambda_2}) = 0 
\end{equation}
and 
\begin{equation}\label{eq: star 2} 
c_J\, f_\lambda^{(\lambda_2-1)}(s_{j_0-1} v_{\lambda_2}) + c_K \,v_K \cdot f_\lambda^{(\lambda_2-1)}(s_{j_0-1} v_{\lambda_2}) = 0. 
\end{equation}
Subtracting~\eqref{eq: star 2} from~\eqref{eq: star 1} and using the formulas given in~\eqref{eqn.formula}, \eqref{eqn.vKformula}, and~\eqref{eqn.vKformula2} we obtain 
\[
c_J (t_{j_0-1} - t_{j_0}) (t_1 - t_{j_0+1}) \cdots (t_1 - t_{n-1})  \prod_{\beta \in \mathcal{A}_1} \beta   
+ c_K (t_{j_0-1} - t_{j_0}) (t_n - t_{j_0+1}) \cdots (t_n - t_{n-1}) \prod_{\beta \in \mathcal{A}_1} s_{\theta}(\beta)  = 0.
\]
Dividing by $t_{j_0-1} - t_{j_0}$ and rearranging yields 
\[
c_J (t_1 - t_{j_0+1}) \cdots (t_1 - t_{n-1})  \prod_{\beta \in \mathcal{A}_1} \beta    = - c_K (t_n - t_{j_0+1}) \cdots (t_n - t_{n-1})  \prod_{\beta \in \mathcal{A}_1} s_{\theta}(\beta) . 
\]
Substituting this expression back in to~\eqref{eq: star 1} and using~\eqref{eqn.formula} and~\eqref{eqn.vKformula} we obtain 
\begin{eqnarray*}
(t_1 - t_{j_0}) \left(- c_K (t_n - t_{j_0+1}) \cdots (t_n - t_{n-1}) \prod_{\beta \in \mathcal{A}_1} s_{\theta}(\beta) \right)  + c_K  (t_n-t_{j_0})(t_n - t_{j_0+1}) \cdots (t_n - t_{n-1})  \prod_{\beta \in \mathcal{A}_1} s_{\theta}(\beta)  = 0
\end{eqnarray*}
from which it follows that 
\begin{eqnarray*}
c_K(t_n-t_1)\left((t_n - t_{j_0+1}) \cdots (t_n - t_{n-1}) \prod_{\beta \in \mathcal{A}_1} s_{\theta}(\beta) \right) = 0
\end{eqnarray*}
and we therefore conclude $c_K=0$, and hence $c_J=0$ also.

We have now shown that two of the coefficients, namely $c_J$ and $c_K$ for $J=\{1,2,\ldots, \lambda_1\}$ and $K=\{2,3,\ldots, \lambda_1, n\}$, appearing in the linear combination from~\eqref{eq: zero linear comb} are equal to $0$. 
Now for any $I \subseteq [n]$ of cardinality $\lambda_1$ with $\lvert J \cap I \rvert = \lambda_1-1$, by Proposition~\ref{prop.support} we can find a permutation $w$ such that $v_I\cdot f_{\lambda}^{(k)}$ and $v_J \cdot f_{\lambda}^{(k)}$ do not vanish at $w$, and $ v_K\cdot  f_{\lambda}^{(k)} (w) = 0$ for all $K \neq I$ and $K \neq J$. But then the fact that $c_J = 0$ implies $c_I = 0$ also. Now we can use the fact that any $I \subseteq [n]$ of cardinality $\lambda_1$ can be obtained from $J$ in finitely many steps by changing $1$ element in the subset at a time, so that by iterating this argument we conclude that $c_I =0$ for all coefficients appearing in the LHS of~\eqref{eq: zero linear comb}, as desired. This shows that the classes $\{v_J \cdot f^{(\lambda_2-1)}_{\lambda} \mid v_J \in \Sn^\lambda \}$ are $H^*_T(\pt)$-linearly independent, as desired. 

Finally, the assertion that the submodule of $H_T^*(\Hess(\mathsf{S},h))$ spanned by the $\Sn$-orbit of $f_{\lambda}^{(\lambda_2-1)}$ has the same character as $\mathrm{Ind}_{S_\lambda}^{\Sn}(\mathbf{1})$ follows immediately from the fact that the stabilizer of $f_\lambda ^{(\lambda_2-1)}$ is $S_\lambda$ by Lemma~\ref{lemma: stabilizer is Slambda}.  The stabilizer of $v_J\cdot f_\lambda^{(k)}$ is $v_JS_\lambda v_J^{-1} \simeq S_\lambda$. This completes the proof of the theorem.
\end{proof}


\section{Linear independence between two $\Sn$-orbits} \label{sec.union.orbits}

In this section, we seek to partially address Problem 3 of Section~\ref{subsec: perm basis program}, in a special case. 
Recall that Problem 2 asks when a single $\Sn$-orbit is $H^*_T(\pt)$-linearly independent. Problem 3 then asks for conditions under which a union of more than one $\Sn$-orbit is also $H^*_T(\pt)$-linearly independent. In this section we focus exclusively on the case where the $\Sn$-orbits under consideration consist of homogeneous elements of the \emph{same} degree. This is a reasonable condition, since the dot action preserves degrees. We now state precisely the hypotheses for the special case we consider in this section. 
First, we restrict to the case $\lambda=(1,n-1)$, so $\lambda_1 = 1$ and $\lambda_2=n-1$.  In this setting, by Theorem~\ref{thm.class} we know that $f_\lambda^{(k)}$ is a well-defined equivariant cohomology class of $H^*_T(\Hess(\mathsf{S},h))$ for all $0 \leq k \leq n-1$.  Second, we also assume $h(1)<n-1$ so Theorem~\ref{theorem: orbit independence} holds and thus the set of cohomology classes in the $\Sn$-orbit of $f_\lambda^{(\lambda_2-1)} = f_\lambda^{(n-2)}$ is $H^*_T(\pt)$-linearly independent.

We now consider the two GKM classes $f^{(\lambda_2)}_{\lambda} = f^{(n-1)}_{\lambda}$ and $f^{(\lambda_2-1)}_{\lambda} = f^{(n-2)}_{\lambda}$ as defined by~\eqref{eqn.cohom-class} corresponding to the choices $k=\lambda_2=n-1$ and $k=\lambda_2-1=n-2$, respectively.  Since $\lambda = (1,n-1)$, the ${\Sn}$-orbit of both $f^{(n-1)}_{\lambda}$ and $f^{(n-2)}_{\lambda}$ are given by taking the images under the dot action of the elements of
$\Sn^\lambda=\{e, u_1^{-1}, u_2^{-1}, \ldots, u_{n-1}^{-1}\}$, where $u_k$ was defined in~\eqref{eq: def uk}. 
For the purpose of this section only, we define notation as follows: 
\begin{eqnarray}\label{eqn.worbits}
f_i := u_i^{-1}\cdot f^{(n-2)}_{\lambda} \ \textup{ and } \ g_i:= u_i^{-1}\cdot f^{(n-1)}_{\lambda} \ \textup{ for } \ i=0, \ldots, n-1.
\end{eqnarray}
As explained above, we restrict our considerations to the case in which $\deg(f_0) = \deg(g_0)$.
The main result of this section is Theorem~\ref{theorem:linear-ind2}, which states that the set $\{f_0, f_1, \cdots, f_{n-1}, g_0, g_1, \cdots, g_{n-1}\}$ is $H_T^*(\pt)$-linearly independent whenever $\deg(f_0) = \deg(g_0) \geq 2$.  In other words, we show that the union of the two permutation bases $\{f_0, \ldots, f_{n-1}\}$ and $\{g_0, \ldots, g_{n-1}\}$, shown individually to be linearly independent in Theorem~\ref{theorem: orbit independence}, is still linearly independent when considered together. 
This therefore represents another step toward the larger goal of building a global permutation basis for the entire cohomology ring $H^*_T(\Hess(\mathsf{S},h))$, as proposed in Problem 3 of Section~\ref{subsec: perm basis program}.

Before embarking on the proof of Theorem~\ref{theorem:linear-ind2} we 
consider  the hypothesis that $\deg(f_0) = \deg(g_0)$. 
Recall that $\S_k:= N(v_k^{-1})\cap v_k(\Phi_h^-)$.  Since $\lambda = (1, n-1)$, we have that $v_k=u_k$ for all $0\leq k \leq n-1$ as noted in Remark~\ref{remark: lambda is mu}.  In particular, by Lemma~\ref{lemma.vk-invs} we have
\[
N(u_{n-1}^{-1}) = \{ t_1-t_b \mid 2\leq b \leq n \} \; \textup{ and } \; N(u_{n-2}^{-1}) = \{t_1-t_b \mid 2\leq b\leq n-1\}.
\]
We also have 
\[
u_{k}\Phi_h^- = \{ t_i-t_j \mid u_k^{-1}(j)<u_k^{-1}(i) \leq h(u_k^{-1}(j)) \}
\]
from which it follows that
\begin{eqnarray*}\label{eqn.S1}
\S_{n-1} = \{ t_1-t_b \mid 2\leq b \leq n \textup{ and } n \leq h(b-1) \} = \{ t_1-t_{i+1} \mid 1\leq i \leq n-1, h(i) = n  \}
\end{eqnarray*}
and
\begin{eqnarray*}\label{eqn.S2}
\S_{n-2} = \{ t_1-t_b \mid 2\leq b\leq n-1 \textup{ and } n-1\leq h(b-1)  \} = \{t_1-t_{i+1} \mid 1 \leq i \leq n-2, h(i)\geq n-1\}.
\end{eqnarray*}
Since the degrees of $f_0 := f_{\lambda}^{(n-2)}$ and $g_0 := f_{\lambda}^{(n-1)}$ are given by the cardinalities of the sets $\S_{n-2}$ and $\S_{n-1}$, respectively, we obtain
\[
\deg(f_0)  = |\{ i \mid i < n-1, \ h(i) \geq n-1   \}| \; \textup{ and } \; \deg(g_0) = |\{i\mid i<n, \ h(i)=n\}|.
\]
Thus, in order to ensure that our classes have the same degree we assume throughout this section that the Hessenberg function $h:[n]\to [n]$ has the property that 
\begin{eqnarray}\label{eqn.degrees}
 \lvert \{i\mid i<n-1,\ h(i)\geq n-1\} \rvert = \lvert \{i\mid i<n, \  h(i)= n \} \rvert. 
\end{eqnarray}

The following lemma records some properties of Hessenberg functions satisfying~\eqref{eqn.degrees}.
\begin{lemma} \label{lemma.j-defn}
Suppose $h: [n] \to [n]$ is a connected Hessenberg function such that~\eqref{eqn.degrees} holds. Then
\begin{enumerate} 
\item the set 
$\{i \mid i < n-1, \ h(i) \geq n-1 \}$ is non-empty, 
\item if we let $j:=\min \{i\mid i<n-1,\ h(i)\geq n-1  \}$, then  
$j$ is the unique element of $[n]$ such that $h(j)=n-1$, 
\item $\deg(f_0) = \deg(g_0) = n-j-1$ for the classes $f_0, g_0$ defined above, and 
\item if $ \lvert \{i\mid i<n-1,\ h(i)\geq n-1\} \rvert = \lvert \{i\mid i<n, \  h(i)= n \} \rvert \geq 2$, then $j < n-2$. 
\end{enumerate}  
\end{lemma}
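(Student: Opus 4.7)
The plan is to analyze the two sets
\[
A := \{i \mid i < n-1, \ h(i) \geq n-1\} \quad \textup{and} \quad B := \{i \mid i < n, \ h(i) = n\},
\]
whose cardinalities equal $\deg(f_0)$ and $\deg(g_0)$ respectively, and to extract structural information from the equality $|A| = |B|$ imposed by~\eqref{eqn.degrees}. My first move is to partition $B$ according to whether $i \leq n-2$ or $i = n-1$: elements of the first kind are precisely those elements of $A$ satisfying $h(i) = n$, and the second kind contributes $1$ if $h(n-1) = n$ and $0$ otherwise. Connectedness of $h$ gives $h(n-1) \geq n$, hence $h(n-1) = n$, so $n-1 \in B$. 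Subtracting the common contribution $|\{i \leq n-2 : h(i) = n\}|$ from both sides of $|A| = |B|$ leaves the key identity
\[
|\{i \leq n-2 : h(i) = n-1\}| = 1,
\]
which drives all remaining arguments.

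With this identity in hand, parts (1)--(3) fall out with minimal effort. For (1), the unique index $i_0 \leq n-2$ with $h(i_0) = n-1$ satisfies $i_0 < n-1$ and $h(i_0) \geq n-1$, so $i_0 \in A$ and $A$ is non-empty. For (2), I would first establish uniqueness of the $n-1$ preimage over all of $[n]$: connectedness gives $h(n-1) = n$, and $h(n) = n$, so neither of $n-1, n$ maps to $n-1$, and combined with the displayed identity this yields $i_0$ as the unique preimage. To identify $j := \min A$ with $i_0$, I would use monotonicity of $h$: we have $j \leq i_0$, and if $j < i_0$ then $h(j) \geq n-1$ together with uniqueness of $i_0$ forces $h(j) = n$, which by monotonicity implies $h(i_0) \geq h(j) = n$, contradicting $h(i_0) = n-1$. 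For (3), monotonicity combined with $h(j) = n-1$ gives $A = \{j, j+1, \ldots, n-2\}$, so $|A| = n-j-1$, and~\eqref{eqn.degrees} propagates this equality to $\deg(g_0)$.

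Part (4) is then a direct numerical consequence of (3): the hypothesis $|A| \geq 2$ yields $n - j - 1 \geq 2$, i.e., $j \leq n - 3$, so in particular $j < n-2$. I do not anticipate a serious obstacle in this argument; the only point requiring care is the bookkeeping that isolates the contribution of $i = n-1$ to $B$, which is precisely where connectedness of $h$ enters essentially.
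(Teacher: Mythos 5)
Your proof is correct and follows essentially the same route as the paper: both hinge on the identity $|\{i<n-1 : h(i)=n-1\}|=1$, obtained by decomposing the two sets in~\eqref{eqn.degrees} and using connectedness to get $h(n-1)=n$. You merely reorder the steps (deriving the identity first) and spell out the verification that $j$ equals the unique preimage of $n-1$, which the paper leaves implicit.
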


\begin{proof} 
For the first claim, observe that under the assumption~\eqref{eqn.degrees}, it suffices to show that 
$\{ i \mid i < n-1, \ h(i)=n\}$ is non-empty.  But it follows from the connectedness of $h$ that $h(n-1)=n$ so $n-1 \in \{i \mid i<n, \ h(i)=n\}$ and hence the set is non-empty as desired. 
To prove the second claim we first observe that
\[
\{i\mid i<n-1, \ h(i)\geq n-1\} = \{i \mid i<n-1,\ h(i)= n-1\} \sqcup \{i \mid i<n-1,\ h(i)=n\}
\]
and
\[
\{i\mid i<n,\ h(i)=n \} = \{i \mid i< n-1, \ h(i) =n \} \sqcup \{ n-1 \}
\]
where we have again used that $h$ is connected, so $h(i) \geq i+1$ for all $1 \leq i \leq n-1$. 
Combining these equations with assumption~\eqref{eqn.degrees} we conclude 
\begin{eqnarray}\label{eqn.degrees2}
|\{i \mid i<n-1, \ h(i)= n-1\}| = 1.
\end{eqnarray}
The assertion that $j$ is unique and $h(j)=n-1$ now follows. 
To see the third claim, note that by definition of $j$ we have 
\[
\{ i \mid i < n-1, \ h(i) \geq n-1\} = \{j, j+1, j+2, \cdots, n-2\}
\]
which implies $\lvert \{ i \mid i < n-1, \ h(i) \geq n-1\} \rvert = n-j-1$, as claimed. Finally, the last claim follows immediately from the third claim, since $\lvert \{i \mid i < n-1, \ h(i) \geq n-1 \} \rvert = n-j-1 \geq 2$ implies $j \leq n-3$, or equivalently $j < n-2$. 
\end{proof}

We now state our main theorem.

\begin{theorem}\label{theorem:linear-ind2}
Let $\lambda=(1,n-1)$ and assume that $h: [n] \to [n]$ is a connected Hessenberg function 
satisfying condition~\eqref{eqn.degrees} and such that $h(1)<n-1$. 
Let $f_i= u_{i}^{-1}\cdot f_\lambda^{(n-2)}$ and $g_i = u_i^{-1}\cdot f_\lambda^{(n-1)}$ for all $i=0, \ldots, n-1$. 
Suppose that $\deg(f_0) = \deg(g_0)\geq 2$. 
  Then the union of the ${\Sn}$-orbits of $f_0$ and $g_0$, namely the set $\{f_0, \ldots, f_{n-1}, g_0, \ldots, g_{n-1}\}$, is $H_T^*(\pt)$-linearly independent.
\end{theorem}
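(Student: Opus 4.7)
The plan is to reduce the linear independence of the $2n$ classes to two successive steps. First, I will show that every coefficient $b_i$ of the top-coset classes $g_i$ must vanish in any hypothetical $H^*_T(\pt)$-linear relation $\sum a_i f_i + \sum b_i g_i = 0$, via an evaluation-and-swap argument. Once this is done, the relation reduces to $\sum a_i f_i = 0$, and Theorem~\ref{theorem: orbit independence} (whose hypotheses hold here since $\lambda_1 = 1$ and $h(1) < \lambda_2 = n-1$) forces every $a_i$ to vanish as well. The key structural input comes from Lemma~\ref{lem.left-coset-support}: since $\lambda = (1, n-1)$ and hence $v_j = u_j$ for every $j$, a direct computation gives $u_i^{-1} u_j S_\lambda^{(j)} = \{w \in \Sn : w(j+1) = i+1\}$, so that $\mathrm{supp}(g_i) = \{w : w(n) = i+1\}$ and $\mathrm{supp}(f_i) = \{w : w(n-1) = i+1\} \cup \{w : w(n) = i+1\}$, together with the explicit formulas
\begin{align*}
f_\ell(w) &= \prod_{m=j}^{n-2}(t_{\ell+1} - t_{w(m)}) \quad \textup{whenever } w \in \mathrm{supp}(f_\ell), \\
g_\ell(w) &= \prod_{m=j+1}^{n-1}(t_{\ell+1} - t_{w(m)}) \quad \textup{whenever } w \in \mathrm{supp}(g_\ell),
\end{align*}
where $j$ is the index of Lemma~\ref{lemma.j-defn}. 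The crucial point is that at any permutation $w$ with $w(n) = i+1$ and $w(n-1) = k+1$ for $k \neq i$, exactly three of our classes can be nonzero, namely $f_i$, $f_k$, and $g_i$.

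\textbf{Step 1 (vanishing of all $b_i$'s).} Fix $i$, and choose any $k \neq i$ and any distinct $a, b \in \{1, \ldots, n\} \setminus \{i+1, k+1\}$ (possible since $n \geq 5$, which follows from $h(1) < n-1$ forcing $j \geq 2$ together with $\deg f_0 \geq 2$ forcing $j \leq n-3$ via Lemma~\ref{lemma.j-defn}(4)). Then positions $j$ and $j+1$ both lie in $\{1, \ldots, n-2\}$. Define $w, w' \in \Sn$ agreeing on every position except that $(w(j), w(j+1)) = (a, b)$ while $(w'(j), w'(j+1)) = (b, a)$, with $w(n) = w'(n) = i+1$ and $w(n-1) = w'(n-1) = k+1$. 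Because the products defining $f_i$ and $f_k$ are symmetric in positions $j, \ldots, n-2$, we have $f_i(w) = f_i(w')$ and $f_k(w) = f_k(w')$. Subtracting the evaluations of the assumed relation at $w$ and $w'$ therefore yields
\begin{eqnarray*}
0 = b_i (g_i(w) - g_i(w')) = b_i (t_a - t_b)(t_{i+1} - t_{k+1}) \prod_{m=j+2}^{n-2} (t_{i+1} - t_{w(m)}),
\end{eqnarray*}
and the right-hand product is a nonzero element of the integral domain $H^*_T(\pt) \cong \C[t_1, \ldots, t_n]$ because $a \neq b$, $i \neq k$, and $w(m) \neq i+1$ for all $m \leq n-2$. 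Hence $b_i = 0$, and varying $i$ gives $b_\ell = 0$ for every $\ell$.

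\textbf{Step 2 and representation statement.} The relation now reads $\sum_i a_i f_i = 0$, and Theorem~\ref{theorem: orbit independence} immediately forces every $a_i = 0$. For the representation-theoretic claim, Lemma~\ref{lemma: stabilizer is Slambda} applied to $f_0 = f_\lambda^{(n-2)}$ and $g_0 = f_\lambda^{(n-1)}$ shows that both have stabilizer equal to $S_\lambda$, so each orbit spans an $\Sn$-submodule with the same character as $\mathrm{Ind}_{S_\lambda}^{\Sn}(\mathbf{1}) \simeq M^{(n-1,1)}$; the linear independence just proved identifies their sum as a direct sum of two such copies.

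The main technical obstacle I expect is the careful bookkeeping in the swap argument---verifying precisely that the products defining $f_i$ and $f_k$ are genuinely symmetric in positions $j$ through $n-2$ (so that exchanging the values at positions $j$ and $j+1$ truly kills their contributions), and that the remaining factor in $g_i(w) - g_i(w')$ is nonvanishing. Both hinge on the degree hypothesis $\deg f_0 \geq 2$, which forces $j+1 \leq n-2$, so that position $j+1$ is strictly interior to $\{1, \ldots, n-2\}$ and is free to be swapped without disturbing the $f$-side of the argument; the only role of $h(1) < n-1$ is to ensure that Theorem~\ref{theorem: orbit independence} applies in Step 2.
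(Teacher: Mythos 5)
Your proof is correct, and it takes a genuinely different route from the paper's. The paper evaluates the putative relation at $u_{n-1}$, $s_ju_{n-1}$, and $s_{j+1}u_{n-1}$ (left multiplication by simple reflections), assembles two $2\times 3$ matrices of values of $f_0, f_{n-1}, g_0$, and invokes the syzygy result Proposition~\ref{prop.matrix} to show the coefficient vector $(c_0,c_{n-1},d_0)$ is proportional to two linearly independent minor vectors, hence zero; it then propagates $c_i=0$ to the remaining indices by further evaluations and finally falls back on the known independence of the top-coset orbit to kill the $d_i$. You instead compare evaluations at pairs $w$, $w'=ws_j$ (right multiplication, i.e.\ swapping the \emph{values} in positions $j$ and $j+1$): since the product defining $f_\ell$ runs over positions $j,\dots,n-2$ while that defining $g_\ell$ runs over $j+1,\dots,n-1$, the swap fixes every $f_\ell$ but changes $g_i$, so subtracting the two evaluations isolates $b_i$ directly -- and this works uniformly for every $i$, not just $i=0$. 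Your argument then closes by reducing to $\sum a_i f_i=0$ and citing Theorem~\ref{theorem: orbit independence}; note this requires the full strength of that theorem (independence of the whole $f$-orbit), whereas the paper ends by citing only the easier top-coset independence of the $g$-orbit (Proposition~\ref{prop: top coset rep}) -- both are available, so this is harmless. Your approach dispenses with Proposition~\ref{prop.matrix} entirely and makes transparent exactly where the hypothesis $\deg f_0=\deg g_0\geq 2$ enters (it forces $j\leq n-3$, so position $j+1$ lies strictly inside $\{1,\dots,n-2\}$ and the swap is legal); I verified your uniform closed formulas for $f_\ell(w)$ and $g_\ell(w)$ on both cosets of the support against Lemma~\ref{lem.left-coset-support}, and they are correct. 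The trade-off is that the paper's matrix-of-minors method is the more systematic template if one hopes to handle unions of more than two orbits, as in Problem~\ref{problem: linear indep}.
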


Let us make some preliminary observations. 
In order to show that $\{f_0, f_1, \cdots, f_{n-1}, g_0, g_1, \cdots, g_{n-1}\}$ is $H^*_T(\pt)$-linearly independent, 
we need to show that if the following equality holds 
\begin{eqnarray}\label{eqn.linear-ind2}
c_0f_0 + c_1f_1+\cdots + c_{n-1}f_{n-1}+d_0g_0 + d_1g_1+\cdots+ d_{n-1}g_{n-1} = 0
\end{eqnarray}
in $H^*_T(\Hess(\mathsf{S},h))$, where $c_0, \cdots, c_{n-1}, d_0, \cdots, d_{n-1} \in H^*_T(\pt)$, then the coefficients are all zero, i.e. $c_0 = c_1 = \cdots = c_{n-1} = d_0 = d_1 = \cdots = d_{n-1} = 0$.

In the course of our arguments, will make use of the following \cite[p.65, Exercise 11]{CoxLittleOShea}. 

\begin{proposition}\label{prop.matrix} Let $A$ be a $(m-1)\times m$ matrix with entries in the polynomial ring $R=k[t_1, \ldots, t_n]$ where $k$ is a field.  Suppose also that the $m-1$ rows of $A$ are linearly independent over $R$.  Then:
\begin{enumerate}
\item The vector $\mathbf{b}^{tr} = (a_1, a_2, \ldots, a_m)\in R^m$ defined by $a_i = (-1)^{i+1} \det(A_i)$ satisfies  $A\mathbf{b} = 0$.  Here $A_i$ is the $(m-1)\times (m-1)$ sub-matrix of $A$ obtained by deleting the $i$-th column of $A$. 
\item Any solution $\mathbf{b}_0$ of the equation $A\mathbf{x}=0$ is of the form $g\mathbf{b}$ for some $g\in R$, i.e. any solution must be a polynomial multiple of $\mathbf{b}$.
\end{enumerate}
\end{proposition}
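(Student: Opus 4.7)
The plan is to prove the two parts separately, with part (1) being a purely formal determinant identity and part (2) requiring a passage to the fraction field combined with a UFD argument.

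For part (1), I would use the classical cofactor-expansion trick. For each row index $k$ with $1 \leq k \leq m-1$, form the $m \times m$ matrix $\tilde{A}_k$ by prepending a copy of the $k$-th row of $A$ to the top of $A$. Because $\tilde{A}_k$ has two equal rows, $\det(\tilde{A}_k) = 0$. Expanding this determinant along the first row gives exactly
\[
\sum_{i=1}^m (-1)^{i+1} A_{ki}\, \det(A_i) = 0,
\]
which is the $k$-th component of the product $A\mathbf{b}$. Letting $k$ range over all rows of $A$ yields $A\mathbf{b} = 0$.

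For part (2), I would pass to the fraction field $K = k(t_1, \ldots, t_n)$ of $R$. Any $K$-linear relation among the rows of $A$ can be cleared of denominators to produce an $R$-linear relation, so the $R$-linear independence of the rows implies $K$-linear independence. Hence $A$ has rank $m-1$ as a matrix over $K$, and its kernel in $K^m$ is one-dimensional. Since at least one minor $\det(A_i)$ must be nonzero (otherwise $A$ would have rank strictly less than $m-1$ over $K$), the vector $\mathbf{b}$ from part (1) is a nonzero element of this kernel and therefore spans the kernel over $K$. Any polynomial solution $\mathbf{b}_0 \in R^m$ of $A\mathbf{x} = 0$ can therefore be written as $\mathbf{b}_0 = (p/q)\mathbf{b}$ for some coprime $p, q \in R$, using that $R$ is a UFD to place $p/q$ in lowest terms. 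Clearing denominators gives $q\mathbf{b}_0 = p\mathbf{b}$, so $q$ divides each product $p a_i$, and coprimality forces $q$ to divide each entry $a_i$ of $\mathbf{b}$.

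The main obstacle will be the last step: showing that the scalar $p/q$ may in fact be taken to lie in $R$, i.e.\ that $q$ can be chosen to be a unit. This amounts to verifying that $\mathbf{b}$ is a primitive vector, so that the common divisibility just deduced forces $q$ to be trivial. The primitivity of the minor vector should follow from the $R$-linear independence of the rows: a common prime factor $\pi$ of all the maximal minors $\det(A_i)$ would mean that the reduction of $A$ modulo $\pi$ has rank strictly less than $m-1$, which (together with the fact that $R/(\pi)$ is an integral domain) produces a nontrivial linear relation among the rows of $A$ over $R$ after lifting, contradicting the hypothesis. Once $q$ is shown to be a unit, the polynomial scalar $g := p/q$ lies in $R$ and satisfies $\mathbf{b}_0 = g\mathbf{b}$, completing the proof.
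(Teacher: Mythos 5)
Part (1) of your argument is correct: the repeated-row trick (prepend row $k$, expand the vanishing $m\times m$ determinant along the new first row) is the standard proof, and note it does not even use the independence hypothesis. For what it is worth, the paper itself offers no proof of this proposition — it is quoted from an exercise in Cox--Little--O'Shea — so your attempt can only be judged on its own merits, and the difficulty sits exactly where you flagged it. The step that fails is the claimed primitivity of $\mathbf{b}$. Your reduction-mod-$\pi$ argument does not go through: if every maximal minor is divisible by a prime $\pi$, you do get a nontrivial dependence of the rows of $\bar{A}$ over $\mathrm{Frac}(R/(\pi))$, but lifting the coefficients $c_k$ to $R$ only yields $\sum_k c_k\,(\text{row}_k\text{ of }A)\in \pi R^m$, not $=0$; a relation modulo $\pi$ is not a relation over $R$, so there is no contradiction with $R$-linear independence of the rows.

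In fact no argument can close this gap, because primitivity of $\mathbf{b}$ — and statement (2) itself, as recorded — is false under the stated hypotheses. Take $m=2$ and $A=(t_1^2 \;\; t_1t_2)$: its single row is nonzero, hence $R$-linearly independent, and $\mathbf{b}=(t_1t_2,\,-t_1^2)$, yet $(t_2,-t_1)$ solves $A\mathbf{x}=0$ and is not an $R$-multiple of $\mathbf{b}$. (Any $A$ whose maximal minors have a nonunit gcd behaves this way.) What your fraction-field argument does prove, correctly, is the weaker statement that every solution in $R^m$ is a $\mathrm{Frac}(R)$-multiple of $\mathbf{b}$, equivalently an $R$-multiple of $\mathbf{b}$ divided by $\gcd(a_1,\ldots,a_m)$; the literal part (2) needs the additional hypothesis that this gcd is a unit. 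The weaker version is actually all that the application requires: in the proof of Theorem~\ref{theorem:linear-ind2} only the proportionality of $(c_0,c_{n-1},d_0)$ to $(A_1,-A_2,A_3)$ (hence $c_0=-d_0$, via $A_1=-A_3$) and the analogous statement for $B$ are used, and these follow from kernel considerations over the fraction field. Indeed, in that application the three minors visibly share the nonunit factor $t_j-t_{j+1}$, so the statement with $g\in R$ could not hold there anyway; your proof, stopped at the fraction-field stage, is the version one should actually cite.
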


We make some preliminary calculations. 
Set $j: = \min\{i\mid i<n-1, h(i)\geq n-1\}$ as in Lemma~\ref{lemma.j-defn}. Using the definition of $f_\lambda^{(n-2)}$  
we can calculate the value of $f_0$ at $u_{n-1}$ and $u_{n-2}$ to obtain
\begin{eqnarray}\label{eqn.highest-coset-value-f}
f_0(u_{n-1}) = \prod_{\substack{i<n-1\\ h(i)\geq n-1}}(t_1-t_{i+1}) = \prod_{j \leq i \leq n-2} (t_1-t_{i+1}) = f_0(u_{n-2}). 
\end{eqnarray}
Similarly for $g_0$ we can use the definition of $f_\lambda^{(n-1)}$ to compute 
\begin{eqnarray}\label{eqn.highest-coset-value-g}
g_0(u_{n-1}) = \prod_{\substack{i<n\\ h(i) =  n}} (t_1-t_{i+1}) = \prod_{j+1\leq i \leq n-1} (t_1-t_{i+1}).
\end{eqnarray}
Next, recall that by definition $f_0 = f^{(n-2)}_{\lambda}$ is nonzero on precisely two right cosets of ${S_\lambda}$ in ${\Sn}$, namely ${S_\lambda} u_{n-2}$ and ${S_\lambda} u_{n-1}$ where 
\begin{eqnarray}\label{eqn.one-line}
u_{n-2} = [2,3,\ldots, n-1, 1, n] \ \textup{ and } \ u_{n-1} = [2,3,\ldots , n, 1 ].
\end{eqnarray}
It is easy to confirm by a direct calculation that:
\begin{eqnarray} \label{eqn.coset-rep-sp}
u_{n-1}^{2} = s_1s_2\cdots s_{n-1}s_1s_2\cdots s_{n-1} = s_2s_3\cdots s_{n-1}u_{n-2}.
\end{eqnarray}
The following are also straightforward computations: 
\[
u_{n-1} s_j = s_{j+1} u_{n-1} \quad \textup{ which means } \quad s_j u_{n-1}^{-1} = u_{n-1}^{-1} s_{j+1}
\]
and also 
\begin{equation}\label{eq: theta}
u_{n-1}^{-1} s_2 s_3 \cdots s_{n-1} = s_\theta
\end{equation} 
where $\theta = t_1 - t_n$, so $s_\theta$ is the transposition exchanging $1$ and $n$ only. 
We can now compute 
\begin{eqnarray*}
f_{n-1}(u_{n-1}) &=& \left( u_{n-1}^{-1} \cdot f^{(n-2)}_\lambda \right)(u_{n-1}) \, \, \textup{ by definition of $f_{n-1}$}  \\
&=& u_{n-1}^{-1}(f_0(u_{n-1}^2)) \, \, \textup{ by definition of the dot action and $f_0$}  \\ 
&=& u_{n-1}^{-1} (f_0(s_2s_3\cdots s_{n-1} u_{n-2})) \, \, \textup{ by~\eqref{eqn.coset-rep-sp}}   \\ 
&=& u_{n-1}^{-1} s_2 s_3 \cdots s_{n-1}(f_0(u_{n-2})) \ \textup{ by Lemma~\ref{lemma: fk def}(3) with $y=s_2s_3\cdots s_{n-1}$ } \\
&=& s_\theta(f_0(u_{n-2})) \ \textup{ by~\eqref{eq: theta}. }
\end{eqnarray*}
From the above and~\eqref{eqn.highest-coset-value-f} we immediately obtain 
\begin{equation}\label{eq: fn-1 at un-1}
f_{n-1}(u_{n-1}) = \prod_{j\leq i \leq n-2} (t_n - t_{i+1}).
\end{equation}

Next, note that $2\leq j \leq n-2$ by Lemma~\ref{lemma.j-defn} and because we have assumed that $h(1)<n-1$.  We therefore have $s_j\in {S_\lambda}$ and
\begin{eqnarray}\label{eqn.eqn2}
s_j u_{n-1} \in {S_\lambda} u_{n-1} \ \textup{ and } \ u_{n-1}s_j u_{n-1}  = s_{j+1}u_{n-1}^2 
= s_{j+1} s_2 s_3 \cdots s_{n-1} u_{n-2} \in {S_\lambda} u_{n-1}
\end{eqnarray}
where we have used~\eqref{eqn.coset-rep-sp} 
and the computations above. 
We can now compute:
\begin{eqnarray}\label{eqn.eqn3}
f_0(s_j u_{n-1})  =  (t_1-t_j)\prod_{j<i\leq n-2}(t_1-t_{i+1})
\end{eqnarray}
since $f_0(s_j u_{n-1}) = s_j f_0(u_{n-1})$ and by~\eqref{eqn.highest-coset-value-f}. 
Recall that we also know $f_0(u_{n-1}) = f_0(u_{n-2})$ by definition of the class $f_0 = f^{(n-2)}_\lambda$  as computed in~\eqref{eqn.highest-coset-value-f}.  Therefore,
\begin{eqnarray}\label{eqn.eqn4}
f_{n-1}(s_j u_{n-1})  &:=&  \left( u_{n-1}^{-1} \cdot f_0 \right)(s_j u_{n-1}) = u_{n-1}^{-1}(f_{0}(u_{n-1}s_ju_{n-1})) = u^{-1}_{n-1}(f_0(s_{j+1}s_2s_3\cdots s_{n-1}u_{n-2})) \textup{ by~\eqref{eqn.eqn2}}   \nonumber \\
&=& u_{n-1}^{-1}s_{j+1}s_2s_3\cdots s_{n-1} (f_0(u_{n-2})) = s_js_\theta (f_0(u_{n-1})) \textup{ by the computations above} \\ 
&=& s_\theta (f_{0}(s_ju_{n-1})) =   (t_n-t_j) \prod_{j<i\leq n-2}(t_n-t_{i+1}) \textup{ since $s_j$ and $s_\theta$ commute and by~\eqref{eqn.eqn3}}. \nonumber
\end{eqnarray}
Finally, we also note that $g_0(s_ju_{n-1}) = s_j(g_0(u_{n-1})) = g_0(u_{n-1})$ since $s_j\in S_\lambda$ and $s_j$ fixes the product appearing in~\eqref{eqn.highest-coset-value-g}.

 With these preliminaries in place, we can now begin our proof of  Theorem~\ref{theorem:linear-ind2}.

\begin{proof}[Proof of Theorem~\ref{theorem:linear-ind2}] 
The computations above show that $f_0$, $f_{n-1}$, and $g_0$ are all nonzero at $u_{n-1}$ and $s_j u_{n-1}$.  Lemma~\ref{lem.left-coset-description} and Proposition~\ref{prop.support} tells us that all other $f_i$ and $g_i$ evaluate to be zero at $u_{n-1}$ and $s_j u_{n-1}$. It follows that, when restricted to the $T$-fixed point $u_{n-1}$, equation~\eqref{eqn.linear-ind2}
 becomes 
 \[
 c_0 f_0(u_{n-1}) + c_{n-1} f_{n-1}(u_{n-1}) + d_0 g_0(u_{n-1}) = 0 
 \]
 and when restricted to $s_j u_{n-1}$ the same equation~\eqref{eqn.linear-ind2} becomes 
 \[
 c_0 f_0(s_j u_{n-1}) + c_{n-1}f_{n-1}(s_j u_{n-1}) + d_0 g_0(s_j u_{n-1}) = 0.
 \]
This is equivalent to the statement that the vector of polynomials $(c_0, c_{n-1}, d_0)^T$ is a solution to the matrix equation $A X = 0$, considered over the ring $H^*_T(\pt)$, where $A$ is the $2 \times 3$ matrix  
\[
A := \begin{bmatrix} f_0(u_{n-1}) & f_{n-1}(u_{n-1}) & g_0(u_{n-1}) \\ 
f_0(s_j u_{n-1}) & f_{n-1}(s_j u_{n-1}) & g_0(s_j u_{n-1}) \end{bmatrix}. 
\]
The entries in $A$ are elements of $H^*_T(\pt) \cong \C[t_1, \ldots, t_n]$, a polynomial ring over the field $\C$. 

We wish to apply Proposition~\ref{prop.matrix} with $m=3$, for which we need first to check that the rows of $A$ are linearly independent over $H^*_T(\pt)$. To do this it suffices to see that the determinant of at least one of the $2 \times 2$ minors of $A$ is non-zero. Let $A_i$ for $i=1,2,3$ denote the minor of $A$ with the $i$-th column deleted. It is a straightforward computation to see that 
\[
A_3 = \left( \prod_{j+1\leq i \leq n-1} (t_1 - t_{i+1}) \right) 
\left( \prod_{j<i\leq n-2} (t_n - t_{i+1}) \right)  (t_{j+1} - t_j)
\]
and 
\[
A_1 = \left( \prod_{j+1\leq i \leq n-1} (t_1 - t_{i+1})\right) 
\left( \prod_{j < i\leq n-2} (t_n - t_{i+1}) \right) (t_j-t_{j+1}).
\]
In particular, we see that $A_1 \neq 0$ and $A_3 \neq 0$ and $A_1 = - A_3$. Thus we may apply Proposition~\ref{prop.matrix}, and from it we conclude that $(c_0, c_{n-1}, d_0) = c (A_1, - A_2, A_3)$ for some $c \in H^*_T(\pt)$.  
Since we saw above that $A_1 = - A_3$, it follows immediately that $c_0 = - d_0$.

We now give the idea of the next steps in our argument before giving the details. From Lemma~\ref{lem.left-coset-description} and Proposition~\ref{prop.support} we know that at any given $w \in \Sn$, exactly two of the $f_i$'s and one of the $g_i$'s evaluate to be non-zero. In the above argument we chose two permutations $u_{n-1}$ and $s_j u_{n-1}$ which had the property that it was exactly $f_0, f_{n-1}$ and $g_0$ which evaluated to be non-zero at these permutations, thus isolating the $3$ coefficients $c_0, c_{n-1}$ and $d_0$ for analysis. By using Proposition~\ref{prop.matrix} we were then able to conclude that $(c_0, c_{n-1}, d_0)$ must be a scalar multiple of a certain vector obtained by taking minors of a $2 \times 3$ matrix, constructed from the values of $f_0, f_{n-1}$ and $g_0$ at these permutations. 
In the next part of our argument, our strategy is to find another permutation $w'$ such that $f_0, f_{n-1}$ and $g_0$ are exactly the three elements in $\{f_0, f_1, \ldots, f_{n-1}, g_0, g_1, \ldots, g_{n-1}\}$ which evaluate to be non-zero at $w'$.  Replacing $s_ju_{n-1}$ with $w'$, a similar argument as that given above creates a new $2 \times 3$ matrix $B$ and yields the conclusion that $(c_0, c_{n-1}, d_0)$ must be a scalar multiple of a vector defined using the minors of $B$. Thus, if we can find a permutation $w'$ such that the vector of minors of $B$ and the vector of minors of $A$ are linearly independent, then we can conclude that $(c_0, c_{n-1}, d_0)$ must be equal to $0$.

We now turn to the details of the argument sketched above. Recall from Lemma~\ref{lemma.j-defn} that $\deg(f_0)=\deg(g_0) = n-j-1$, and our assumption $\deg(f_0)=\deg(g_0)\geq 2$ implies $j<n-2$.
This in turn implies that the simple reflection $s_{j+1}$ commutes with $s_\theta$.  We now argue that we may take $w'=s_{j+1}u_{n-1}$. Indeed we can compute that since $s_{j+1}\in S_\lambda$, using~\eqref{eqn.highest-coset-value-g} we have 
\begin{eqnarray}\label{eqn.g0}
g_0(s_{j+1} u_{n-1}) = s_{j+1}(g_0(u_{n-1}))  = (t_1-t_{j+1}) \prod_{j+1<i\leq n-1} (t_1-t_{i+1}) 
\end{eqnarray}
and 
\begin{eqnarray}\label{eqn.f0}
f_0(s_{j+1}u_{n-1}) = s_{j+1}(f_0(u_{n-1})) = f_0(u_{n-1})
\end{eqnarray}
since $s_{j+1}$ fixes the product appearing in~\eqref{eqn.highest-coset-value-f}. Next, using similar reasoning as in~\eqref{eqn.eqn2} and~\eqref{eqn.eqn4}, we obtain
\begin{eqnarray*}
f_{n-1}(s_{j+1}u_{n-1}) = s_{j+1}(f_{n-1}u_{n-1}) = f_{n-1}(u_{n-1})
\end{eqnarray*}
since $s_{j+1}$ also fixes the product appearing in~\eqref{eq: fn-1 at un-1}.  
Thus $f_0, f_{n-1}, g_0$ are precisely the $3$ classes that evaluate to be non-zero at $w'$, and the other classes are all zero at $w'$. The above computations allow us to analyze the relevant $2 \times 3$ matrix 
\[
B := 
\begin{bmatrix} f_0(u_{n-1}) & f_{n-1}(u_{n-1}) & g_0(u_{n-1}) \\ 
f_0(s_{j+1} u_{n-1}) & f_{n-1}(s_{j+1} u_{n-1}) & g_0(s_{j+1} u_{n-1}) \end{bmatrix}.
\]

Recall that we had already observed that $A_1 = - A_3$ in the vector of minors obtained from the original matrix $A$. Let $B_i$ be the analogous minor of $B$ obtained by deleting the $i$-th column. As argued above, it suffices to show that $(A_1, A_2, A_3)$ is linearly independent from $(B_1, B_2, B_3)$, for which it suffices to see that $B_1 \neq - B_3$ (since $H^*_T(\pt)$ is an integral domain). From the above computations we obtain,
\[
B_1 = f_{n-1}(u_{n-1})g_0(s_{j+1}u_{n-1}) - g_0(u_{n-1}) f_{n-1}(s_{j+1}u_{n-1}) = f_{n-1}(u_{n-1}) [g_0(s_{j+1}u_{n-1}) - g_0(u_{n-1}) ]
\]
and thus
\[
B_1 = \left( \prod_{j\leq i \leq n-2} (t_n-t_{i+1}) \right) \left( \prod_{j+1<i\leq n-1}(t_1-t_{i+1}) \right) (t_{j+2}-t_{j+1})
\]
so $B_1\neq 0$. On the other hand, we have
\[
B_3 = f_0(u_{n-1})f_{n-1}(s_{j+1}u_{n-1}) - f_{n-1}(u_{n-1})f_0(s_{j+1}u_{n-1}) 
=0
\]
and the result now follows.

Thus we have seen that $(B_1, B_2, B_3)$ is $H^*_T(\pt)$-linearly independent from 
$(A_1, A_2, A_3)$, which shows that $c_0 = c_{n-1}=d_0 = 0$. 

To complete the argument, we must now show that $c_i = d_i=0$ for all $i$, $0 \leq i \leq n-1$.

Consider $w \in \Sn$ such that $w(n)=1$ and $w(n-1)=i+1$ for $i \not \in \{0,n-1\}$. Since $w(n)=1$, we obtain $w\in S_\lambda u_{n-1}$ which implies that $f_0(w)\neq 0$ and $g_0(w)\neq 0$. Furthermore, $w(n-1) = i+1$ implies that $u_i w (n-1) = u_i(i+1)=1$ so $u_iw\in S_\lambda u_{n-2}$ which tell us that
\[
f_i(w) = u_i^{-1}\cdot f_0(w) = u_i^{-1}(f_0(u_iw)) \neq 0
\] 
also.  Thus, evaluating equation~\eqref{eqn.linear-ind2} at $w$ we get
\[
c_0 f_0(w) + c_i f_i(w) + d_0 g_0(w) = 0.
\]
However, since $c_0=d_0 = 0$, this implies that $c_i = 0$, since $f_i(w) \neq 0$ and $H^*_T(\pt)$ is an integral domain.  Thus $c_i=0$ for all $0 \leq i\leq n-1$. This means that the original linear dependence relation is among the $g_0, g_1, \ldots, g_{n-1}$, but we have already proved these are linearly independent, so $d_i=0$ for all $0 \leq i \leq n-1$. This concludes the proof. 
\end{proof}

We conclude with a motiving example and open problem.  As noted in the introduction, one reason for focusing on partitions with two parts is the fact that when $h: [n] \to [n]$ is an \textit{abelian} Hessenberg function (that is, when $h(1)\geq \max\{ i \mid h(i)<n \}$), the only irreducible representations which occur in the dot action representation are those corresponding to partitions with at most two parts (see \cite[Cor.~5.12]{Harada-Precup2019}). In this case, the Stanley--Stembridge conjecture is known to hold and work of the first two authors gives an inductive formula for number of permutation representations $M^\mu$ that appear in each graded part \cite{Harada-Precup2019}.  The following example considers a special case of abelian Hessenberg functions.  Using the constructions of this manuscript, we are able to define the correct number of equivariant cohomology classes generating the representations $M^{(n-1,1)}$ in certain graded pieces of the dot action representation.

\begin{example} \label{ex.(n-2,n-1,n,...,n)} Let $n$ be a positive integer with $n\geq 5$ and $h=(n-2,n-1,n, n, \ldots, n)$.  We consider the decomposition of each graded piece of the dot action representation into permutation representations, given by
\begin{eqnarray}\label{eqn.c-coeff}
H^{2i} (\Hess(\mathsf{S}, h)) = \bigoplus_{\substack{\mu \vdash n\\ \mu=(\mu_1, \mu_2)}}c_{\mu, i} M^{\mu}.
\end{eqnarray}
In the special case under consideration, we apply the results of~\cite{Harada-Precup2019}.  The possible two-element sink sets (i.e.~independent sets) of the ``incomparability graph'' of $h=(n-2,n-1,n,n,\ldots,n)$ are $\{1,n-1\}$, $\{2,n\}$, and $\{1,n\}$.  Now the inductive formula of \cite[Thm.~6.1]{Harada-Precup2019} tells us that 
\[
c_{\mu, i} = 0 \; \textup{ unless } \; \mu\in \{(n), (n-1,1), (n-2,2)\}.
\]
and
\[
c_{(n-1,1),i} = 0 \;  \textup{ for all $0\leq i \leq n-4$ and } \; c_{(n-1,1),n-3}=2.
\]
(The interested reader can find a similar computation in~\cite[Example 6.2]{Harada-Precup2019}.)
In other words, the minimal degree in which $M^{(n-1,1)}$ appears is  $2(n-3)$, and there are exactly two copies of $M^{(n-1,1)}$ in this degree.  By assumption, $h$ is a connected Hessenberg function satisfying all assumptions of Theorem~\ref{theorem:linear-ind2} above.  In particular:
\[
|\{i\mid i<n-1, h(i)\geq n-1 \}| = | \{ i\mid i<n, h(i)=n \}| =  n-3
\]  
in this case.  Thus, the classes $\{f_0, f_1, \ldots, f_{n-1}, g_0, g_1, \ldots, g_{n-1}\}$ give us a linearly independent set of equivariant classes in $H_T^{2(n-3)}(\Hess(\mathsf{S},h))$ that together span exactly two $H_T^*(\pt)$-modules, each of which is isomorphic to $M^{(n-1,1)}$.
\end{example}

The example above shows that our Theorem~\ref{theorem:linear-ind2} yields part of a permutation basis for $H^{2(n-3)}(\Hess(\mathsf{S},(n-2, n-1, n, \ldots, n)))$.  Indeed, one easily confirms that the only other representations appearing in this degree are trivial.  We therefore recover a permutation basis for $H^{2(n-3)}_T(\Hess(\mathsf{S},(n-2, n-1, n, \ldots, n)))$ by adding to our collection an appropriate number of $\Sn$-invariant classes of degree  $2(n-3)$.   It is still an open question how to build, in the other degrees, linearly independent sets of classes spanning permutation modules.

More interestingly, since  $2(n-3)$ is the \textit{minimal} degree in which $M^{(n-1,1)}$ occurs in $H^*(\Hess(\mathsf{S},(n-2, n-2, n, \ldots, n)))$, one could hope to obtain classes in higher degree generating an isomorphic $H_T^*(\pt)$-submodule by multiplying each of the $f_i$'s (or $g_i$'s) by some appropriately chosen $\Sn$-invariant class.  

\begin{problem}\label{problem: linear indep} Let $n$ be a positive integer with $n\geq 5$ and set $h= (n-2, n-1, n, \ldots, n)$. Suppose $k>n-3$ and $c_{(n-1,1),k}\neq 0$ where $c_{(n-1,1),k}$ is the coefficient defined as in~\eqref{eqn.c-coeff} above.  Identify $\Sn$-invariant classes $h_1, \ldots, h_m \in H_T^{2(k-(n-3))}(\Hess(\mathsf{S},h))$, where $m=c_{(n-1,1),k}$, and $r$ with $1\leq r\leq m$ so that the set
\[
\{ h_j f_i \mid 1\leq j \leq r,0\leq i \leq n-1 \} \cup \{ h_j g_i \mid r+1\leq k\leq m, 0 \leq i \leq n-1 \}
\]
is $H_T^*(\pt)$-linearly independent.
\end{problem}

Any solution to this open problems is another step toward the construction of a permutation basis for the dot action representation in this case. In general, one may hope to show that our construction always yields a linearly independent basis for those $M^\mu$ of minimal degree that appear as summands of the $\Sn$-representation on $H^*_T(\Hess(\mathsf{S},h))$, whenever $h$ is abelian.

\bibliographystyle{alpha}

\end{document}